\theoremstyle{plain}
\newtheorem{thm}{Theorem}[section]
\newtheorem{lem}[thm]{Lemma}
\newtheorem{prop}[thm]{Proposition}
\newtheorem{cor}[thm]{Corollary}
\newtheorem{remark}[thm]{Remark}
\theoremstyle{definition}
\begin{document}
\title{Long-time asymptotics for nonlocal porous medium equation with absorption or
convection}
\author{Filomena Feo~\thanks{Dipartimento di Ingegneria, Universit\`{a} degli Studi di
Napoli \textquotedblleft Pathenope\textquotedblright, Centro
Direzionale Isola C4 80143 Napoli, Italy. E--mail:
filomena.feo@uniparthenope.it; bruno.volzone@uniparthenope.it }
\and
Yanghong Huang~\thanks{School of Mathematics, The University of Manchester, Oxford Road,
Manchester, M13 9PL, United Kingdom. Email: yanghong.huang@manchester.ac.uk}  	\and
       Bruno Volzone~\footnotemark[1]
       }

\date{\today}

\maketitle

\begin{abstract}
  In this paper, the long-time asymptotic behaviours of nonlocal porous medium equations
 with absorption or convection are studied. In the parameter regimes when the nonlocal diffusion
 is dominant, the entropy method is adapted in this context to derive the exponential convergence
 of relative entropy of solutions in similarity variables.
\end{abstract}

\section{Introduction}
A large variety of models for conserved quantities in continuum mechanics or physics
are described by the continuity equation
$
  u_\tau + \nabla \cdot (u\mathbf{v})=0,
$
where the density distribution $u(y,\tau)$ evolves in time $\tau$
following a velocity field $\mathbf{v}(y,\tau)$. According to Darcy's law,
the velocity $\mathbf{v}$ is usually derived from a potential $p$ in the form $\mathbf{v}
=-\mathcal{D}\nabla p$ for some tensor $\mathcal{D}$. In porous media, the power-law relation $p=u^m$
is commonly proposed, leading to one of the canonical nonlinear diffusion equations~\cite{MR2286292}.
Although local constitutive relations like $p=u^m$ were successful in numerous practical
models, there are situations where the potential (or pressure) $p$ depends non-locally on the
density distribution $u$~\cite{MR1918950}. The simplest prototypical example is
$p=(-\Delta)^{-s}u$, expressed as the Riesz potential of $u$, i.e.,
\begin{equation}\label{eq:riesz}
p(y,t)=(-\Delta)^{-s}u(y,t) = C_{N,s}\int_{\mathbb{R}^N} |y-z|^{2s-N}u(z,t)\mathrm{d}z,
\end{equation}
where the constant $C_{N,s} = \pi^{-N/2}2^{-s}\Gamma(N/2-s)/\Gamma(s)$ is written
in terms of the Euler Gamma function $\Gamma(z)$.
The resulting evolution equation then becomes
\begin{equation}\label{eq:pmep}
  u_\tau - \nabla\cdot(u\nabla(-\Delta)^{-s}u)=0,
\end{equation}
and basic questions like existence, uniqueness and regularity of solutions
have been studied thoroughly in~\cite{vazquez1,vazquez2},
followed by generalisations to other related models~\cite{MR3294409,SDV18}.
While in general it is difficult to obtain quantitative properties of solutions to
non-local nonlinear equations, Eq.~\eqref{eq:pmep} possesses  special features
that enable one to study the long term behaviours
in terms of its self-similar solution. The self-similar profile,
also called \emph{Barenblatt profile}, was initially characterized by an obstacle
problem~\cite{vazquez1}, and was then explicitly constructed  in~\cite{MR3294409}.
Using similarity variables motivated from the scaling relation,
the transformed equation has an entropy function so that
the convergence towards the self-similar profile in one dimension can be established
by the well-known entropy method in~\cite{MR3279352}.

In this paper, we consider two variants of the porous medium equation~\eqref{eq:pmep} with
nonlocal pressure. If the medium is lossy, the continuous density is dissolved,  leading to
the porous medium equation with absorption
\begin{equation}\label{eq:pmea}
  u_\tau - \nabla\cdot(u\nabla(-\Delta)^{-s}u)=-u^r.
\end{equation}
On the other hand, if the density moves with a density dependent local velocity, the equation with
convection becomes
\begin{equation}\label{eq:pmec}
  u_\tau - \nabla\cdot(u\nabla(-\Delta)^{-s}u)=-\mathbf{b}\cdot\nabla u^q,
\end{equation}
for some constant vector $\mathbf{b}\in \mathbb{R}^N$.
In general, when dealing with solutions of both signs, the right-hand side of \eqref{eq:pmea} or
\eqref{eq:pmec} should be replaced by $|u|^{r-1}u$ and $-\mathbf{b}\cdot\nabla
\big(|u|^{q-1}u\big)$
respectively. However, if the initial data are non-negative as we assume below,
the solutions are also non-negative on the time interval of existence,
by following the arguments of \cite{vazquez2,MR3294409}.

For the classical heat equation or porous medium equation,
such variants with absorption~\cite{MR748242,MR868174,MR959221,MR1135955} or convection~\cite{
MR1124296,MR1470816,MR1708659}
have been studied intensively in the past, where the long term asymptotic behaviours
depend on the interplay between the diffusion and absorption/convection,
usually dictated by appropriate scaling transforms.
However,  for Eq.~\eqref{eq:pmea} or~\eqref{eq:pmec} of our interests here, the presence of
nonlinear nonlocal diffusion makes refined techniques such the comparison principle
no longer valid, presenting great challenges in the investigation of quantitative
properties of the resulting solutions.\\ \noindent
In this paper,  we will focus on the long time behaviours of solutions to~\eqref{eq:pmea}
and~\eqref{eq:pmec} mainly in the regime when the nonlocal nonlinear diffusion is dominant,
such that the absorption or convection essentially becomes perturbations added to
Eq.~\eqref{eq:pmep}.
As a result, the entropy method  that is essential in establishing the precise convergence
rate of Eq.~\eqref{eq:pmep} in~\cite{MR3279352} will be adapted in our context.
To proceed, basic estimates for solutions of Eq.~\eqref{eq:pmea} and~\eqref{eq:pmec} will be reviewed
first in Section 2, and the main results about the convergence of solutions will be stated.
In Section 3, the exponential convergence of the relative entropy
between the solutions to Eq.~\eqref{eq:pmea} and their time-dependent Barenblatt profiles
are proved, followed by the proof of similar convergence rates for Eq.~\eqref{eq:pmec} with
convection  in Section 4.
We conclude this paper in Section \ref{generalisation} with generalisations
to related equations and other open problems.

\section{Basic estimates and main results}

Before discussing long-term asymptotic behaviours of Eq.~\eqref{eq:pmea}
or~\eqref{eq:pmec}, we first review a few
fundamental questions about the existence and uniqueness of the solutions in appropriate
spaces. To treat initial data as wide as possible, the usual notion of
weak solutions using test functions can be introduced,
as in~\cite{vazquez2,MR3082241,MR3294409,SFV16} for related equations,
provided that the extra terms corresponding to absorption or convection are  well-defined.
The proof of the existence such weak solutions is now standard, mainly by
taking the limit of regularised equations, by adding linear diffusion, removing
the singularity in the nonlocal operator and confining the solution on bounded domains
(see~\cite{vazquez2,SFV16} for more details).
Here for simplicity, we only focus on the simplest regularisation technique with linear diffusion,
and  define the weak solutions as the limit of solutions to the regularized problems
\begin{equation}\label{eq:pmeareg}
\frac{\partial u^\epsilon}{\partial \tau} -
\nabla\cdot(u^\epsilon\nabla(-\Delta)^{-s}u^\epsilon)=-(u^\epsilon)^r
+ \epsilon \Delta u^\epsilon
\end{equation}
to Eq.~\eqref{eq:pmea} or
\begin{equation}\label{eq:pmecreg}
\frac{\partial u^\epsilon}{\partial \tau}
- \nabla\cdot(u^\epsilon\nabla(-\Delta)^{-s}u^\epsilon)
=-\mathbf{b}\cdot\nabla (u^\epsilon)^q + \epsilon \Delta u^\epsilon,
\end{equation}
to Eq.~\eqref{eq:pmec}, without resorting to more complicated functional
theoretic settings or approximation methods.
The convergence of the regularized
solutions $u^\epsilon$ to the weak solutions of Eq.~\eqref{eq:pmea} or~\eqref{eq:pmec}
is only assumed to be weak* for
non-negative measures, so that the main results are still valid, independent of
the particular type of convergence for the limiting solution sequences in other (more regular) spaces.
However, the question about the uniqueness of these weak solutions
remains open, except in some special cases in one dimension~\cite{ChasJako}. To focus on
the asymptotic  behaviours of our interests here, the weak solutions obtained by
taking limits from~\eqref{eq:pmeareg} or~\eqref{eq:pmecreg} are assumed to be
unique and are the only physically relevant ones below.

Before showing basic estimates of the solutions, we first need several inequalities related
to the nonlocal operator, in standard $L^p(\mathbb{R}^N)$ space with the norm
$\|f\|_p = \left( \int_{\mathbb{R}^N} |f|^p \right)^{1/p}$. The first inequality is related to
the Riesz potential operator $(-\Delta)^{-s}$ defined in~\eqref{eq:riesz},
or equivalently as a multiplier $|\xi|^{-\alpha}$ in the Fourier space, i.e.,
$\mathcal{F}\big[ (-\Delta)^{-\alpha/2} f\big](\xi) =
|\xi|^{-\alpha}\mathcal{F}(\xi)$.
\begin{lem}[Hardy-Littlewood-Sobolev inequality] Let $0<s<N$ and $1<p<q<\infty$ with the relation
	$1/q=1/p-s/N$. If $f \in L^p(\mathbb{R}^N)$, then $(-\Delta)^{-s/2}f
	\in L^q(\mathbb{R}^N)$. Moreover, there is a constant $C(N,s,p)$ such that
	\begin{equation}\label{eq:HLS}
	\|(-\Delta)^{-s/2}f\|_q \leq C(N,s,p)\|f\|_p.
	\end{equation}
\end{lem}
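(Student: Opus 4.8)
The plan is to prove the inequality in its equivalent formulation for the Riesz potential. Since the multiplier of $(-\Delta)^{-s/2}$ is $|\xi|^{-s}$, this operator coincides, up to a dimensional constant, with convolution against the kernel $|x|^{s-N}$, so that $(-\Delta)^{-s/2}f(x) = c_{N,s}\int_{\mathbb{R}^N}|x-y|^{s-N}f(y)\,\mathrm{d}y$; it therefore suffices to bound this integral operator, which I denote $I_s$. The guiding idea is to derive a pointwise estimate of $|I_sf(x)|$ in terms of the Hardy--Littlewood maximal function $Mf(x)=\sup_{R>0}|B(x,R)|^{-1}\int_{B(x,R)}|f|$ and the norm $\|f\|_p$, and then to integrate and invoke the maximal function theorem. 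Without loss of generality one may take $f\ge 0$.

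First I would split the convolution at a free radius $R>0$, writing $I_sf(x)=\int_{|x-y|<R}|x-y|^{s-N}f(y)\,\mathrm{d}y + \int_{|x-y|\ge R}|x-y|^{s-N}f(y)\,\mathrm{d}y$. For the near part, a dyadic decomposition of the ball $B(x,R)$ into annuli $2^{-k-1}R\le|x-y|<2^{-k}R$ bounds each piece by $(2^{-k}R)^{s-N}$ times the mass of $f$ on $B(x,2^{-k}R)$, hence by a constant multiple of $(2^{-k}R)^s Mf(x)$; summing the geometric series in $k$, which converges because $s>0$, gives the near part $\le C R^s Mf(x)$. For the far part, Hölder's inequality with exponents $p$ and $p'$ yields the bound $\|f\|_p\big(\int_{|z|\ge R}|z|^{(s-N)p'}\,\mathrm{d}z\big)^{1/p'}$; the last integral converges precisely because the scaling relation forces $p<N/s$, equivalently $q<\infty$, and a direct computation evaluates it to a constant times $R^{s-N/p}=R^{-N/q}$.

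Combining the two estimates gives $|I_sf(x)|\le C\big(R^s Mf(x)+R^{-N/q}\|f\|_p\big)$ for every $R>0$. Optimising the right-hand side over $R$ — balancing the two terms and using the identity $s+N/q=N/p$ that follows from $1/q=1/p-s/N$ — produces the pointwise interpolation inequality $|I_sf(x)|\le C\,(Mf(x))^{p/q}\|f\|_p^{1-p/q}$. Raising this to the power $q$, integrating over $\mathbb{R}^N$, and applying the strong $(p,p)$ Hardy--Littlewood maximal inequality $\|Mf\|_p\le C\|f\|_p$ then gives $\|I_sf\|_q^q\le C\|f\|_p^{(1-p/q)q}\|Mf\|_p^p\le C\|f\|_p^q$, which is the claim after taking $q$-th roots.

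The main obstacle is the use of the maximal function theorem in the final step, which is valid only because $p>1$; at the excluded endpoint $p=1$ the strong bound fails and one must instead argue through the weak-type estimate and Marcinkiewicz interpolation. A secondary point requiring care is verifying the exponent bookkeeping, in particular the identity $s+N/q=N/p$ and the resulting consistency $(1-p/q)q+p=q$, both of which are exactly encoded in the dimensional balance $1/q=1/p-s/N$.
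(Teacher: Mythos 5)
Your argument is correct and complete: it is the classical Hedberg pointwise estimate, splitting the Riesz potential $I_s f$ at a free radius $R$, bounding the near part by $CR^sMf(x)$ via dyadic annuli and the far part by $CR^{-N/q}\|f\|_p$ via H\"older, optimising in $R$ to get $|I_sf(x)|\le C\,(Mf(x))^{p/q}\|f\|_p^{1-p/q}$, and closing with the strong $(p,p)$ maximal inequality. All the exponent bookkeeping checks out ($s+N/q=N/p$, $sp/N=1-p/q$, $(q-p)+p=q$), and your remarks about why $p>1$ and $q<\infty$ are needed are exactly the right caveats. For comparison: the paper does not prove this lemma at all --- it states the Hardy--Littlewood--Sobolev inequality as a classical preliminary and uses it as a black box (only the Stroock--Varopoulos and Gagliardo--Nirenberg inequalities are given references), so your write-up supplies a self-contained proof where the paper offers none. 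Two cosmetic points if you were to polish it: justify the reduction to $f\ge 0$ by $|I_sf|\le I_s|f|$, and note that the optimisation in $R$ is only performed at points where $0<Mf(x)<\infty$, which is almost every point since $Mf\in L^p$ for $f\in L^p$ with $p>1$ (and if $Mf(x)=0$ then $f\equiv 0$).
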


Associated with the Riesz operator is its inverse, the fractional Laplacian $(-\Delta)^{\alpha/2}$,
as a multiplier $|\xi|^{\alpha}$ in the Fourier space, i.e, $\mathcal{F}\big[ (-\Delta)^{\alpha/2} f\big](\xi) =
|\xi|^{\alpha}\mathcal{F}(\xi)$.
The natural space related to the fractional Laplacian is
$\dot{H}^{\alpha/2}(\mathbb{R}^N)$, the set of all functions $f$ such that
\[
\|f\|_{\dot{H}^{\alpha/2}}:= \|(-\Delta)^{\alpha/2}f\|_2 = \left(\int_{\mathbb{R}^N} |\xi|^\alpha |\mathcal{F}[f](\xi)|^2 \mathrm{d}\xi
\right)^{1/2} < \infty.
\]

Although one can not manipulate identities about functions with nonlocal operators in the same ways
as for classical derivatives, there are still powerful inequalities like the following ones
that are crucial in the proof of basic estimates below.
\begin{lem}[Stroock-Varopoulos inequality] For $p\geq1, \alpha \in (0,2]$,
  and any non-negative function $w	\in C_c^\infty(\mathbb{R}^N)$, the following inequality holds true
	\begin{equation}\label{eq:svineq}
	\int w^p (-\Delta)^{\frac{\alpha}{2}} w
	\geq \frac{4p}{(p+1)^2}\int \left|(-\Delta)^{\frac{\alpha}{4}} w^{\frac{p+1}{2}}\right|^2
	=\frac{4p}{(p+1)^2}\int w^{\frac{p+1}{2}} (-\Delta)^{\frac{\alpha}{2}} w^{\frac{p+1}{2}}.
	\end{equation}
\end{lem}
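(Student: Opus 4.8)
The plan is to reduce the operator inequality to an elementary pointwise (scalar) inequality via the singular-integral representation of the fractional Laplacian. First I would dispose of the rightmost equality in~\eqref{eq:svineq}, which is purely a Fourier/self-adjointness statement and requires no positivity: writing $v=w^{\frac{p+1}{2}}$ and recalling that $(-\Delta)^{\alpha/4}$ acts as the multiplier $|\xi|^{\alpha/2}$, Plancherel's theorem gives $\int |(-\Delta)^{\alpha/4} v|^2 = \int |\xi|^\alpha |\mathcal{F}[v]|^2 = \int v\,(-\Delta)^{\alpha/2} v$. The substance of the lemma is therefore the first inequality.

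For $\alpha\in(0,2)$ I would invoke the pointwise formula $(-\Delta)^{\alpha/2} w(x) = c_{N,\alpha}\,\mathrm{P.V.}\!\int_{\mathbb{R}^N}\frac{w(x)-w(y)}{|x-y|^{N+\alpha}}\,\mathrm{d}y$ with $c_{N,\alpha}>0$, valid for $w\in C_c^\infty$. Symmetrising each quadratic form in $(x,y)$ converts it into a double integral against the nonnegative kernel $|x-y|^{-N-\alpha}$: with $f=w^p$, $g=w$ one obtains
\begin{equation*}
\int w^p (-\Delta)^{\alpha/2} w = \frac{c_{N,\alpha}}{2}\iint \frac{\big(w(x)^p - w(y)^p\big)\big(w(x)-w(y)\big)}{|x-y|^{N+\alpha}}\,\mathrm{d}x\,\mathrm{d}y,
\end{equation*}
and likewise $\int w^{\frac{p+1}{2}}(-\Delta)^{\alpha/2} w^{\frac{p+1}{2}}$ becomes the double integral of $\big(w(x)^{\frac{p+1}{2}} - w(y)^{\frac{p+1}{2}}\big)^2$ against the same kernel. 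Since the kernel is nonnegative, it suffices to prove the inequality integrand-by-integrand.

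This leaves the scalar inequality $(a^p - b^p)(a-b) \geq \frac{4p}{(p+1)^2}\big(a^{\frac{p+1}{2}} - b^{\frac{p+1}{2}}\big)^2$ for all $a,b\geq 0$ (here the hypothesis $w\geq 0$ enters, ensuring the fractional powers are defined). Assuming $a>b\geq0$ and writing each difference as an integral, $a^p - b^p = p\int_b^a t^{p-1}\mathrm{d}t$, $a-b = \int_b^a \mathrm{d}t$, and $a^{\frac{p+1}{2}} - b^{\frac{p+1}{2}} = \frac{p+1}{2}\int_b^a t^{\frac{p-1}{2}}\mathrm{d}t$, the Cauchy-Schwarz inequality applied to $t^{(p-1)/2}\cdot 1$ gives $\big(\int_b^a t^{\frac{p-1}{2}}\mathrm{d}t\big)^2 \leq \big(\int_b^a t^{p-1}\mathrm{d}t\big)\big(\int_b^a \mathrm{d}t\big)$, which is exactly the claimed bound after collecting the factors $p$ and $\frac{(p+1)^2}{4}$.

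The point requiring the most care is the endpoint $\alpha=2$, where the singular-integral representation degenerates and the operator is the classical Laplacian. Here I would argue directly by integration by parts: $\int w^p(-\Delta)w = p\int w^{p-1}|\nabla w|^2$ and $\int |(-\Delta)^{1/2} w^{\frac{p+1}{2}}|^2 = \int |\nabla w^{\frac{p+1}{2}}|^2 = \frac{(p+1)^2}{4}\int w^{p-1}|\nabla w|^2$, so~\eqref{eq:svineq} in fact holds with \emph{equality} at $\alpha=2$. The remaining technical obstacle, for $\alpha\in(0,2)$, is justifying the symmetrisation and the interchange of the principal value with the spatial integration; since $w\in C_c^\infty$, the first-order cancellation $w(x)-w(y)=O(|x-y|)$ renders the symmetrised integrand absolutely integrable near the diagonal, so Fubini applies and the formal manipulations are legitimate.
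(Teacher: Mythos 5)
Your argument is correct and complete. Note, however, that the paper itself does not prove this lemma: it simply defers to the reference of Liskevich and Semenov, where the inequality is established in the abstract setting of generators of sub-Markovian semigroups (one writes $\int w^p A w$ as a limit of $\tfrac1t\int w^p(w-T_tw)$ and exploits the positivity and mass bound of the kernel of $T_t$ to reduce to the same scalar inequality you use). Your route is the concrete specialisation of that argument to $A=(-\Delta)^{\alpha/2}$: you replace the semigroup kernel by the explicit singular-integral kernel $c_{N,\alpha}|x-y|^{-N-\alpha}$, symmetrise, and then everything rests on the elementary inequality
\[
(a^p-b^p)(a-b)\;\geq\;\frac{4p}{(p+1)^2}\bigl(a^{\frac{p+1}{2}}-b^{\frac{p+1}{2}}\bigr)^2,\qquad a,b\geq0,
\]
which you correctly obtain from Cauchy--Schwarz applied to the integral representations of the three differences. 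What your approach buys is a short, self-contained proof with all analytic justifications visible (the $O(|x-y|)$ cancellation near the diagonal, Plancherel for the middle equality, and the clean integration-by-parts identity showing equality at $\alpha=2$); what the semigroup approach buys is generality, since it covers arbitrary sub-Markovian generators rather than just the fractional Laplacian. Two minor points worth making explicit in a write-up: the Lipschitz bound $|a^p-b^p|\leq p\max(a,b)^{p-1}|a-b|$ (valid because $p\geq1$) is what guarantees that the symmetrised integrand involving $w^p$ and $w^{\frac{p+1}{2}}$ is $O(|x-y|^{2-N-\alpha})$ near the diagonal even when $p$ is not an integer and $w$ has zeros; and since $(p+1)/2\geq1$, the function $w^{\frac{p+1}{2}}$ is Lipschitz with compact support, so $(-\Delta)^{\alpha/4}w^{\frac{p+1}{2}}\in L^2$ and the middle expression in the claimed chain is finite.
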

\begin{lem}[Fractional Gagliardo-Nirenberg inequality]\label{lem:GNi}
	1) Let $0<\alpha\leq2$. If $u$ is a non-negative function such that $u \in L^1(\mathbb{R}^N)$ and $u^{m/2} \in \dot{H}^{\alpha/2}(\mathbb{R}^N)$
	for some $m>2$, then
	$u \in L^l(\mathbb{R}^N)$ for any $l \in (m/2,m)$ and
	\begin{equation}\label{eq:fGNi}
	\|u\|_{l}^a \leq C_{N,\alpha} \|(-\Delta)^{\alpha/4}u^{m/2}\|_2^2 \|u\|_1^b,
	\end{equation}
	where $C_{N,\alpha}$ is a constant depending only on $N$ and $\alpha$,
	\[
	a = \frac{l(\alpha+N(m-1))}{N(l-1)},\qquad
	b = a-m= \frac{\alpha l + N(m-l)}{N(l-1)}.
	\]
	2) Let $l\geq1$,  $0<\alpha<\min{\{N,2\}}$. 
	Then for any $v \in L^l(\mathbb{R}^N)\bigcap \dot{H}^{\alpha}(\mathbb{R}^N)$,
	\begin{equation}\label{GN}
	\|v\|_{m}^{\theta+1}\leq C_{1} \|(-\Delta)^{\frac{\alpha}{2}}v\|_2\|v\|_l^\theta
	\end{equation}
	where $m=\frac{N(l+2)}{2(N-\alpha)}$, $\theta=\frac{l}{2}$ and the constant $C_{1}=C(N,\alpha)\frac{(l+2)^{2}}{8(l+1)}$.
\end{lem}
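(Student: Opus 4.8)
The plan is to prove both inequalities as Gagliardo--Nirenberg interpolation estimates whose engine is the Sobolev embedding $\dot H^{\alpha/2}(\mathbb{R}^N)\hookrightarrow L^{2N/(N-\alpha)}(\mathbb{R}^N)$, valid for $0<\alpha<N$. This embedding follows directly from the Hardy--Littlewood--Sobolev inequality \eqref{eq:HLS}: writing $g=(-\Delta)^{-\alpha/4}h$ and applying \eqref{eq:HLS} with $s=\alpha/2$ and $p=2$ gives $\|g\|_{2N/(N-\alpha)}\leq C(N,\alpha)\|(-\Delta)^{\alpha/4}g\|_2$, the conditions $0<s<N$ and $2<q<\infty$ reducing exactly to $0<\alpha<N$. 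Every subsequent step applies this embedding to a suitable power of the function, combined with Hölder interpolation for part~1 and with the Stroock--Varopoulos inequality \eqref{eq:svineq} for part~2.

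For part~1 I would apply the embedding to $g=u^{m/2}$. Since $\|u^{m/2}\|_{2N/(N-\alpha)}=\|u\|_{p^{*}}^{m/2}$ with $p^{*}:=mN/(N-\alpha)$, this yields $\|u\|_{p^{*}}^{m/2}\leq C(N,\alpha)\|(-\Delta)^{\alpha/4}u^{m/2}\|_2$. Because $m>2$ forces $m/2>1$, the range $l\in(m/2,m)$ lies inside $(1,p^{*})$, so one may interpolate the $L^{l}$ norm between $L^{1}$ and $L^{p^{*}}$ by Hölder's inequality, $\|u\|_{l}\leq\|u\|_1^{1-\sigma}\|u\|_{p^{*}}^{\sigma}$ with $\tfrac1l=(1-\sigma)+\tfrac{\sigma}{p^{*}}$, that is $\sigma=\frac{(l-1)p^{*}}{l(p^{*}-1)}$. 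Inserting the Sobolev bound for $\|u\|_{p^{*}}$ and raising to the power $a=m/\sigma$ makes the exponent of $\|(-\Delta)^{\alpha/4}u^{m/2}\|_2$ equal to $2$; a direct computation then gives $a=\frac{l(\alpha+N(m-1))}{N(l-1)}$, while the exponent of $\|u\|_1$ is $a(1-\sigma)=a-m=b$, matching the stated values, with constant $C(N,\alpha)^{2}$ independent of $l$.

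For part~2 I would take $v\geq0$ (the case relevant to the applications; the general case follows by applying the argument to $|v|$ and using $\||v|\|_{\dot H^{\alpha}}\leq\|v\|_{\dot H^{\alpha}}$, valid for $\alpha\leq1$), and approximate by functions in $C_c^\infty(\mathbb{R}^N)$ so that \eqref{eq:svineq} is licit. Applying the Sobolev embedding to $v^{(l+2)/4}$ gives $\|v\|_{m}^{(l+2)/4}=\|v^{(l+2)/4}\|_{2N/(N-\alpha)}\leq C(N,\alpha)\|(-\Delta)^{\alpha/4}v^{(l+2)/4}\|_2$, and squaring produces $\|v\|_{m}^{\theta+1}$ on the left, since $\theta+1=(l+2)/2$. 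For the right-hand side I would invoke \eqref{eq:svineq} with $p=l/2$, whose associated power is $w^{(p+1)/2}=v^{(l+2)/4}$, to get $\|(-\Delta)^{\alpha/4}v^{(l+2)/4}\|_2^{2}\leq\frac{(p+1)^{2}}{4p}\int v^{l/2}(-\Delta)^{\alpha/2}v$, and finally Cauchy--Schwarz, $\int v^{l/2}(-\Delta)^{\alpha/2}v\leq\|v\|_l^{l/2}\|(-\Delta)^{\alpha/2}v\|_2=\|v\|_l^{\theta}\|(-\Delta)^{\alpha/2}v\|_2$. Collecting the Sobolev constant with the reciprocal Stroock--Varopoulos factor $(p+1)^{2}/(4p)$ at $p=l/2$ delivers the constant $C_{1}$ in the stated form.

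The principal obstacle is not any individual estimate but the range of validity in $\alpha$ and $N$: the entire scheme rests on the embedding $\dot H^{\alpha/2}\hookrightarrow L^{2N/(N-\alpha)}$, which degenerates as $\alpha\uparrow N$ and fails for $\alpha\geq N$ (relevant, for instance, to $N=1$ with $\alpha\in[1,2]$ in part~1); those borderline cases would require a separate embedding into Hölder or $L^{\infty}$ spaces together with a correspondingly modified interpolation. A secondary difficulty is the sign and regularity reduction in part~2, namely the nonnegativity and $C_c^\infty$ approximation needed to apply \eqref{eq:svineq}, and the careful bookkeeping of the explicit constant, obtained by tracking the Sobolev constant alongside the Stroock--Varopoulos factor $(p+1)^{2}/(4p)$.
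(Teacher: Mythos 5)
First, note that the paper does not prove Lemma~\ref{lem:GNi} at all: it refers the reader to \cite[Lemma 3.2]{MR3294409} and \cite[Lemma 5.3]{genfrac}, so your argument can only be measured against the statement itself and against how the lemma is used later. Your part~2 is essentially correct and is the standard derivation (HLS/Sobolev embedding applied to $v^{(l+2)/4}$, then Stroock--Varopoulos with $p=l/2$, then Cauchy--Schwarz); the hypothesis $\alpha<\min\{N,2\}$ there makes the embedding legitimate, and the constant you obtain, $C(N,\alpha)(l+2)^2/(8l)$ rather than $C(N,\alpha)(l+2)^2/(8(l+1))$, differs only by a bounded factor and has the same growth in $l$, which is all the Moser iteration in Proposition~\ref{Propregminim} needs. (The reduction to $v\geq 0$ via $|v|$ is also harmless here since \eqref{GN} is only ever applied to nonnegative powers of $\rho^\epsilon_{M_\epsilon(t)}$ with $\alpha=1-s<1$.)

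The genuine gap is in part~1. Your whole argument runs through the embedding $\dot H^{\alpha/2}\hookrightarrow L^{2N/(N-\alpha)}$, which requires $\alpha<N$, whereas the statement allows every $\alpha\in(0,2]$ --- and, crucially, the case excluded by your proof is exactly the one the paper uses: in Proposition~\ref{u:estim} the lemma is invoked with $\alpha=2-2s$, and the main theorems live in dimension $N=1$ with $s<1/2$, so $\alpha=2-2s>1=N$ and the HLS exponent $q=2N/(N-\alpha)$ is not even admissible. You flag this honestly, but flagging it does not close it, and "a separate embedding into H\"older or $L^\infty$ spaces" is not quite the right fix because $\dot H^{\alpha/2}$ alone does not control $L^\infty$. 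The standard way to cover all $\alpha\in(0,2]$ uniformly is a Nash-type Fourier splitting applied to $w=u^{m/2}$: for every $R>0$,
\begin{equation*}
\|w\|_2^2=\int_{|\xi|<R}|\widehat w|^2+\int_{|\xi|>R}|\widehat w|^2
\leq C_N R^{N}\|w\|_1^{2}+R^{-\alpha}\big\|(-\Delta)^{\alpha/4}w\big\|_2^{2},
\end{equation*}
and optimising in $R$ gives $\|u\|_m^{m}\leq C\,\|u\|_{m/2}^{m\alpha/(N+\alpha)}\|(-\Delta)^{\alpha/4}u^{m/2}\|_2^{2N/(N+\alpha)}$ with no restriction relating $\alpha$ and $N$. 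Interpolating $\|u\|_{m/2}\leq\|u\|_1^{1/(m-1)}\|u\|_m^{(m-2)/(m-1)}$, absorbing the $\|u\|_m$ factor into the left-hand side, and then interpolating $\|u\|_l$ between $\|u\|_1$ and $\|u\|_m$ reproduces \eqref{eq:fGNi} with exactly the stated exponents $a$ and $b$ (one checks this first at $l=m$, where the exponent of $\|u\|_m$ comes out as $m(N(m-1)+\alpha)/(N(m-1))=a|_{l=m}$). This Fourier-splitting step is the missing idea; with it, your Sobolev-embedding route for $\alpha<N$ becomes redundant rather than complementary.
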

The proof of the Strook-Varopoulos inequality can be found in~\cite{MR1409835}, and
for the proof of \eqref{eq:fGNi} or \eqref{GN}, see~\cite[Lemma 3.2]{MR3294409} and~\cite[Lemma
5.3]{genfrac}, or the survey~\cite{MR2944369}.
With all the preliminary inequalities,  we can derive the basic estimates for solutions
to~\eqref{eq:pmea} or~\eqref{eq:pmec}.






\begin{prop}[Basic estimates for equation with absorption] \label{u:estim}
  Let $u$ be the weak solution to Eq. \eqref{eq:pmea} obtained as the limit
  of $u^\epsilon$ to Eq.~\eqref{eq:pmeareg}
	with non-negative initial data $u^\epsilon(0)=u_0$, $r>1$ and $1\leq p \leq \infty$. If $u_0\in L^p(\mathbb{R}^N)\cap L^1(\mathbb{R}^N)$, we have
	\begin{equation}\label{estim:lp}
	\|u(\tau)\|_p\leq  \|u_0\|_p, \quad \quad \tau\geq0.
	\end{equation}
	Moreover if $u_0\in L^1(\mathbb{R}^N)$ the following estimate
	\begin{equation}\label{decay:lp}
	\|u(\tau)\|_p\leq C(\|u_0\|_1,N,p,r,s)
	\tau^{-\max\left(\frac{N(p-1)}{p(N+2-2s)},\frac{p-1}{p(r-1)}\right)},
	\quad \quad \tau>0,
	\end{equation}
	holds for a positive constant $C(\|u_0\|_1,N,p,r,s)$ depending on $N,p,r,s$ and on the initial condition only through the total mass $\|u_0\|_1$.
\end{prop}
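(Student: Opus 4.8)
The plan is to carry out all computations on the regularized solutions $u^\epsilon$ of \eqref{eq:pmeareg}, which are smooth and decay at infinity, so that every integration by parts and every application of the Stroock--Varopoulos \eqref{eq:svineq} and Gagliardo--Nirenberg \eqref{eq:fGNi} inequalities is justified; the estimates for the weak solution $u$ are then recovered by letting $\epsilon\to0$ and using only the weak* lower semicontinuity of the $L^p$ norms, so that the upper bounds are inherited in the limit.

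First I would prove \eqref{estim:lp}. Multiplying \eqref{eq:pmeareg} by $p(u^\epsilon)^{p-1}$ and integrating, the nonlocal term, after two integrations by parts using $p(u^\epsilon)^{p-1}\nabla u^\epsilon=\nabla(u^\epsilon)^p$ and $\Delta(-\Delta)^{-s}=-(-\Delta)^{1-s}$, becomes $-(p-1)\int (u^\epsilon)^p(-\Delta)^{1-s}u^\epsilon$, which is non-positive for $p\ge1$ by \eqref{eq:svineq} with $\alpha=2-2s\in(0,2)$. The absorption contributes $-p\int(u^\epsilon)^{p+r-1}\le0$ and the linear diffusion contributes $-p(p-1)\epsilon\int(u^\epsilon)^{p-2}|\nabla u^\epsilon|^2\le0$, so $\tfrac{d}{d\tau}\|u^\epsilon\|_p^p\le0$ and \eqref{estim:lp} follows. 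The same computation with $p=1$ gives $\tfrac{d}{d\tau}\|u^\epsilon\|_1=-\int(u^\epsilon)^r\le0$, i.e. the mass is non-increasing, $\|u(\tau)\|_1\le\|u_0\|_1$, which I use repeatedly below; it also settles \eqref{decay:lp} trivially at $p=1$.

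For the decay with $1<p<\infty$, writing $I_p(\tau):=\|u^\epsilon(\tau)\|_p^p$, I keep the two negative terms,
\[
\frac{d}{d\tau}I_p \le -(p-1)\frac{4p}{(p+1)^2}\big\|(-\Delta)^{\frac{1-s}{2}}(u^\epsilon)^{\frac{p+1}{2}}\big\|_2^2 - p\int(u^\epsilon)^{p+r-1},
\]
and convert each into a power of $I_p$. For the first term I apply \eqref{eq:fGNi} with $m=p+1$, $\alpha=2-2s$ and $l=p\in((p+1)/2,p+1)$ to bound the seminorm below by $c\,\|u^\epsilon\|_p^{a}\|u^\epsilon\|_1^{-b}$, and use $\|u^\epsilon\|_1\le\|u_0\|_1$; a short computation gives $a/p-1=(N+2-2s)/(N(p-1))$. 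For the absorption term I interpolate $\int(u^\epsilon)^p$ between $L^1$ and $L^{p+r-1}$ by Hölder, obtaining $\int(u^\epsilon)^{p+r-1}\ge I_p^{1/\theta}\|u_0\|_1^{-(1-\theta)/\theta}$ with $\theta=(p-1)/(p+r-2)$ and $1/\theta-1=(r-1)/(p-1)$. Each term thus yields an autonomous inequality $\dot I_p\le -C I_p^{\gamma}$ with $\gamma>1$, whose solution satisfies $I_p(\tau)\le C'\tau^{-1/(\gamma-1)}$; here the two exponents $1/(\gamma-1)$ equal exactly $N(p-1)/(N+2-2s)$ and $(p-1)/(r-1)$. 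Since $I_p$ satisfies both inequalities simultaneously, it is bounded by the faster-decaying envelope, and taking the $p$-th root produces the maximum in \eqref{decay:lp}. The endpoint $p=\infty$ I would recover by letting $p\to\infty$ in the finite-$p$ bounds, checking that the exponents converge to $N/(N+2-2s)$ and $1/(r-1)$.

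The main obstacle is twofold. First, the endpoint $p=\infty$ forces me to track the dependence of the Gagliardo--Nirenberg and Stroock--Varopoulos constants on $p$, so that the limit $p\to\infty$ is legitimate; this is essentially a Moser-type iteration and is the delicate analytic point, since naive constants could blow up. Second, I must ensure that the differential inequalities, derived rigorously only at the smooth regularized level, pass to the limit $\epsilon\to0$; for this I rely on the weak* convergence of $u^\epsilon$ together with lower semicontinuity of the norms, which guarantees that the one-sided bounds, but not necessarily the exact identities, survive in the limit.
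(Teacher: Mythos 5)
Your proposal follows essentially the same route as the paper's proof: the $L^p$ energy estimate for the regularized problem combined with the Stroock--Varopoulos inequality, then the fractional Gagliardo--Nirenberg inequality (with $l=p$, $m=p+1$, $\alpha=2-2s$) and H\"older interpolation against the non-increasing mass, producing two autonomous differential inequalities whose combination yields the maximum of the two decay exponents, with the bounds passed to the weak solution by lower semicontinuity. The only adjustment concerns $p=\infty$: the paper explicitly notes that letting $p\to\infty$ in the finite-$p$ bounds fails because the constant $C(\|u_0\|_1,N,p,r,s)$ blows up, and instead adapts the $L^1$--$L^\infty$ smoothing (Moser-type iteration) argument of \cite{MR3294409} --- which is exactly the fallback you flag as the delicate point, so that iteration should be promoted from contingency to the actual argument for the endpoint.
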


\begin{proof} The non-negativity of the solution $u^\epsilon(\tau)$
  can be proved similarly as in~\cite{vazquez2,MR3294409}, and is skipped for simplicity.
Then the fact $\frac{d}{d\tau}\|u^\epsilon(\tau)\|_1 = -\int \big(u^\epsilon(\tau)\big)^r \leq 0$
immediately implies that $\|u^\epsilon(\tau)\|_1 \leq \|u^\epsilon(0)\|_1=\|u_0\|_1$.
Similarly, for  any finite $p>1$,
\begin{align}\label{eq:dudtau}
  \frac{1}{p}\frac{d}{d\tau} \|u^\epsilon(\tau)\|^p_p
  &= \epsilon\int (u^\epsilon)^{p-1} \Delta u^\epsilon
  +\int \nabla\cdot\big(u^\epsilon\nabla (-\triangle)^{-s}u^\epsilon\big)  (u^\epsilon)^{p-1}
  -\int  (u^\epsilon)^{r+p-1} \cr
  &= -(p-1)\epsilon\int (u^\epsilon)^{p-2}|\nabla u^\epsilon|^2
  - \frac{p-1}{p}\int (u^\epsilon)^p (-\Delta)^{1-s}u^\epsilon - \int (u^\epsilon)^{r+p-1} \cr
  &\leq
  - \frac{4(p-1)}{(p+1)^2}\int \left|(-\Delta)^{\frac{1-s}{2}}(u^\epsilon)^{\frac{p+1}{2}}\right|^2
  - \int (u^\epsilon)^{r+p-1},
\end{align}
where the Stroock-Varopoulos inequality~\eqref{eq:svineq}
is used in the last step. Since all terms in the last line of~\eqref{eq:dudtau} are non-positive,
we can conclude that $\|u^\epsilon(\tau)\|_p \leq \|u^\epsilon(0)\|_p$,
which is also true for $p=\infty$.

To obtain refined decay rate along the evolution,  the last two terms in~\eqref{eq:dudtau}
can be related to $\|u_\epsilon(\tau)\|_p$ to establish a self-contained differential inequality.
By choosing $l=p, \alpha=2-2s$ and $m=p+1$, the fractional Gagliardo-Nirenberg
inequality~\eqref{eq:fGNi} becomes
\begin{equation*}\label{E1}
  \big\|u^\epsilon\big\|_p^a
  \leq C_{N,\alpha} \left\|(-\Delta)^{\frac{1-s}{2}}\big(u^\epsilon\big)^{\frac{p+1}{2}}
  \right\|_2^2\big\|u^\epsilon\big\|_1^b
  \leq C_{N,\alpha} \left\|(-\Delta)^{\frac{1-s}{2}}\big(u^\epsilon\big)^{\frac{p+1}{2}}
  \right\|_2^2\big\|u_0\big\|_1^b
\end{equation*}
with $a=\frac{p(Np+2-2s)}{N(p-1)}$ and $b= \frac{N+2p-2sp}{N(p-1)}$.  On the other hand,
we have the H\"{o}lder inequality
\begin{equation*}
  \big\|u^\epsilon\big\|_p \leq
  \big\|u^\epsilon\big\|_{r+p-1}^{\frac{(r+p-1)(p-1)}{(r+p-2)p}}
  \big\|u^\epsilon\big\|_1^{\frac{(r-1)}{p(r+p-2)}}
  \leq  \big\|u^\epsilon\big\|_{r+p-1}^{\frac{(r+p-1)(p-1)}{(r+p-2)p}}
  \big\|u_0\big\|_1^{\frac{(r-1)}{p(r+p-2)}}.
\end{equation*}
Therefore, the differential inequality~\eqref{eq:dudtau}
suggests the following two  decay rates of $\big\|u^\epsilon(\tau)\big\|_p$,
\[
  \frac{1}{p} \frac{d}{d\tau} \|u^\epsilon(\tau)\|_p^p
  \leq  -\frac{4(p-1)}{C_{N,\alpha}(p+1)^2\|u_0\|_1^b} \|u^\epsilon(\tau)\|_p^a
  \qquad \mbox{and}\qquad
  \frac{1}{p} \frac{d}{d\tau} \|u^\epsilon(\tau)\|_p^p
  \leq - \frac{\|u^\epsilon\|_p^{\frac{p(r+p-2)}{p-1}}}{\|u_0\|_1^{\frac{r-1}{p-1}}},
\]
from which we get
\[
 \|u^\epsilon(\tau)\|_p  \leq
\left(
 \frac{4p(N+2-2s)}{NC_{N,\alpha}(p+1)^2\|u_0\|_1^b}\tau
  \right)^{-\frac{N(p-1)}{p(N+2-2s)}}
  \quad \mbox{and}\quad
  \|u^\epsilon(\tau)\|_p \leq
  \left(
  \frac{p(r-1)}{p-1}\|u_0\|_1^{-\frac{r-1}{p-1}}\tau
  \right)^{-\frac{p-1}{p(r-1)}}.
\]
These two bounds lead to the estimate
$\|u^\epsilon(\tau) \|_p \leq C(\|u_0\|_1,N,p,r,s)\tau^{-\max\left(\frac{N(p-1)}{p(N+2-2s)},
\frac{p-1}{p(r-1)}\right)}$, where the constant depends only on certain powers of $\|u_0\|_1$.
The desired estimates~\eqref{estim:lp} and~\eqref{decay:lp} then hold by taking the limit as $\epsilon$ goes to zero.

However, the case for $p=\infty$ in~\eqref{decay:lp} can
not be obtained directly by taking the limit  as $p$ goes to infinity, because the constant
$C(\|u_0\|_1,N,p,r,s)$ becomes infinity. Instead, the proof in~\cite[Theorem 6.1]{MR3294409} for
$L^1-L^\infty$ smoothing effect of solutions  is adapted.
\end{proof}

\begin{remark}
The two decay rates in~\eqref{decay:lp} actually correspond to two different regimes
for the behaviours of the solutions: in the diffusion dominated regimes
with $r>(2N+2-2s)/N$, the decay rate $\tau^{-N(p-1)/p(N+2-2s)}$ prevails and the constant $C$ does not depend on $r$; otherwise
the diffusion and the absorption balance each other, and the rate $\tau^{-(p-1)/p(r-1)}$
prevails. We will mainly focus  on the first case, where the entropy method can be applied
in the transformed equation with similarity variables.
\end{remark}

For Eq.~\eqref{eq:pmec} with convection, similar computation can be applied
to obtain the following estimates.

\begin{prop}[Basic estimates for equation with convection] \label{u:estim3}
	Let $u$ be the weak solution to Eq.~\eqref{eq:pmec} obtained as the limit of $u^\epsilon$ to
    Eq.~\eqref{eq:pmecreg} with non-negative initial data $u^\epsilon(0)=u_0$, $q>1$ and
 $1\leq p \leq \infty$. If $u_0 \in L^1(\mathbb{R}^N) \cap L^p(\mathbb{R}^N)$,  we have
	\begin{equation*}
	\|u(\tau)\|_p\leq  \|u_0\|_p, \quad \quad \tau\geq0.
	\end{equation*}
	Moreover if $u_0 \in L^1(\mathbb{R}^N)$ the following inequality
	\begin{equation*}
	\|u(\tau)\|_p\leq C(\|u_0\|_1,N,p,q,s)
	\tau^{-\frac{N(p-1)}{p(N+2-2s)}},
	\quad \quad \tau>0,
	\end{equation*}
	holds for a positive constant $C(\|u_0\|_1,N,p,q,s)$ depending on $N,s,p$ and on the initial total mass $\|u_0\|_1$.
\end{prop}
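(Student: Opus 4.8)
The plan is to follow the proof of Proposition~\ref{u:estim} almost verbatim, the essential new feature being that the convection term sits in divergence form and therefore drops out of every $L^p$ balance. First I would treat the $L^1$ norm. Differentiating $\|u^\epsilon(\tau)\|_1$ and integrating \eqref{eq:pmecreg}, I note that each term on the right-hand side---the nonlocal flux $\nabla\cdot(u^\epsilon\nabla(-\Delta)^{-s}u^\epsilon)$, the convection $-\mathbf{b}\cdot\nabla(u^\epsilon)^q$ (with $\mathbf{b}$ constant), and $\epsilon\Delta u^\epsilon$---is a total divergence, so all integrate to zero. Hence the mass is \emph{conserved}, $\|u^\epsilon(\tau)\|_1=\|u_0\|_1$, rather than merely non-increasing as in the absorption case; this is exactly what keeps $\|u_0\|_1$ as the operative constant downstream.

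Next I would compute $\tfrac{1}{p}\tfrac{d}{d\tau}\|u^\epsilon(\tau)\|_p^p$ for finite $p>1$. The diffusive contributions reproduce those in \eqref{eq:dudtau}, namely $-(p-1)\epsilon\int(u^\epsilon)^{p-2}|\nabla u^\epsilon|^2-\tfrac{p-1}{p}\int(u^\epsilon)^p(-\Delta)^{1-s}u^\epsilon$. The only genuinely new term is $-\int(u^\epsilon)^{p-1}\mathbf{b}\cdot\nabla(u^\epsilon)^q$; using the identity $(u^\epsilon)^{p-1}\nabla(u^\epsilon)^q=\tfrac{q}{p+q-1}\nabla(u^\epsilon)^{p+q-1}$ and the constancy of $\mathbf{b}$, this equals $\tfrac{q}{p+q-1}\int\nabla\cdot\big(\mathbf{b}\,(u^\epsilon)^{p+q-1}\big)$, which vanishes as the integral of a divergence. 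Discarding the non-positive $\epsilon$-term and applying the Stroock--Varopoulos inequality \eqref{eq:svineq} then yields
\[
\frac{1}{p}\frac{d}{d\tau}\|u^\epsilon(\tau)\|_p^p \leq -\frac{4(p-1)}{(p+1)^2}\int\left|(-\Delta)^{\frac{1-s}{2}}(u^\epsilon)^{\frac{p+1}{2}}\right|^2 \leq 0,
\]
which delivers the $L^p$ non-increase $\|u^\epsilon(\tau)\|_p\leq\|u_0\|_p$ immediately.

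For the decay rate I would close an autonomous differential inequality using diffusion alone. Choosing $l=p$, $\alpha=2-2s$, $m=p+1$ in the fractional Gagliardo--Nirenberg inequality \eqref{eq:fGNi} and substituting the conserved mass $\|u^\epsilon\|_1=\|u_0\|_1$ gives
\[
\frac{1}{p}\frac{d}{d\tau}\|u^\epsilon(\tau)\|_p^p \leq -\frac{4(p-1)}{C_{N,\alpha}(p+1)^2\|u_0\|_1^{b}}\,\|u^\epsilon(\tau)\|_p^{a},\qquad a=\frac{p(Np+2-2s)}{N(p-1)}.
\]
Since $a/p-1=\tfrac{N+2-2s}{N(p-1)}>0$, integrating this ODE produces the power law $\|u^\epsilon(\tau)\|_p\leq C\,\tau^{-N(p-1)/p(N+2-2s)}$ with $C=C(\|u_0\|_1,N,p,q,s)$; in fact, because the convection term dropped out entirely, the constant carries no true dependence on $q$. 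Letting $\epsilon\to0$ yields the claimed estimates for finite $p$, and the endpoint $p=\infty$ follows---as in the absorption case---from the $L^1$--$L^\infty$ smoothing effect of \cite[Theorem 6.1]{MR3294409} rather than from passing $p\to\infty$. The one point requiring care is justifying the integration by parts that annihilates the convection term, i.e.\ that $(u^\epsilon)^{p+q-1}$ decays quickly enough at infinity; for the regularised solutions this is secured by the added linear diffusion, so no real obstacle arises and the convection case is actually simpler than the absorption case, the single diffusive rate surviving on its own.
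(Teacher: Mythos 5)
Your proposal is correct and follows exactly the route the paper intends: the paper gives no separate proof of Proposition~\ref{u:estim3}, stating only that ``similar computation'' to Proposition~\ref{u:estim} applies, and your argument is precisely that computation, with the correct observations that the divergence-form convection term annihilates under integration by parts (via $(u^\epsilon)^{p-1}\nabla(u^\epsilon)^q=\tfrac{q}{p+q-1}\nabla(u^\epsilon)^{p+q-1}$), that mass is conserved rather than merely non-increasing, and that only the diffusive Gagliardo--Nirenberg decay branch survives. Your remark that the constant has no genuine $q$-dependence is also consistent with the paper's own statement that $C$ depends only on $N,s,p$ and $\|u_0\|_1$.
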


In this paper, we concentrate on cases where the absorption in Eq.~\eqref{eq:pmea}
or the convection in Eq.~\eqref{eq:pmec} is dominated by the nonlocal diffusion, so that
the long term asymptotic behaviours are essentially governed by Eq.~\eqref{eq:pmep}.
As a result,  the same change of similarity variables used to study Eq.~\eqref{eq:pmep}
in~\cite{vazquez1,MR3294409,MR3279352} is adopted here, that is,
\begin{equation} \label{eq:simivar}
x=y(1+\lambda\tau)^{-1/\lambda}, \quad t=\frac{1}{\lambda}\log(1+\lambda\tau),\quad
\rho(x,t) = (1+\lambda\tau)^{N/\lambda}u(y,\tau)
\end{equation}
with $\lambda =N+2-2s$. In this way, the absorption or convection becomes exponentially small
perturbations as shown in the next two sections. To better illustrate the main techniques involved,
we first review the convergence of the transformed equation
\begin{equation}\label{eq:smfracpme}
\rho_t - \nabla_x \cdot(\rho\nabla_x (-\Delta)^{-s} \rho+x\rho)=0,
\end{equation}
obtained from Eq.~\eqref{eq:pmep} via the similar variables~\eqref{eq:simivar}.
It is show in~\cite{vazquez1} that, the solution $\rho(x,t)$ converges
to a steady state, which can be characterized as the solution of a fractional obstacle problem.
The steady state $\rho_M(x)$, depending on the total conserved mass $M =
\int \rho(x,0)\mathrm{d}x= \int u(x,0)\mathrm{d}x$, is given explicitly in~\cite{MR3294409} as
\begin{equation}\label{eq:profile}
\rho_{M}(x)
= \frac{2^{2s-1}\Gamma(1+N/2)}{\Gamma(2-s)
	\Gamma(1-s+N/s)}\Big( R^2 - |x|^2\Big)_{+}^{1-s},
\end{equation}
where the radius of support $R$ is determined by the total conserved mass $M$ through the relation
\begin{equation}
  M = \int   \rho_{M}(x) \mathrm{d}x
= \frac{2^{2s}\pi^{N/2}\Gamma(1+N/2)}{(N+2-2s)\Gamma(1-s+N/2)^2}R^{N+2-2s}.\label{mass}
\end{equation}
Because of the presence both nonlinearity and nonlocality, the convergence of the solution
$\rho(x,t)$ towards $\rho_M(x)$ is more difficult,
relying heavily on the Lyapunov function or the entropy
\begin{equation}\label{eq:entropy}
H[\rho] = \int \frac{1}{2}\rho (-\Delta)^{-s}\rho
+ \frac{1}{2}|x|^2\rho.
\end{equation}
Since $H[\rho]$ is a convex functional, it is proved in~\cite{MR3262506} that
the steady state~\eqref{eq:profile} is  the unique minimizer of $H[\rho]$ on the space
of all non-negative measures  with total mass $M$ and is characterized by the relation
\begin{equation}\label{eq:profrel}
(-\Delta)^{-s}\rho_M(x) + \frac{1}{2}|x|^2 =
\frac{N}{2(N-2s)}R^2,\qquad |x| \leq R,
\end{equation}
and
\begin{equation}\label{eq:profre2}
(-\Delta)^{-s}\rho_M(x) + \frac{1}{2}|x|^2 \geq
\frac{N}{2(N-2s)}R^2,\qquad |x| > R.
\end{equation}
However, more refined questions like the convergence rate of $\rho(x,t)$ towards $\rho_M(x)$
seem less likely to be answered in the framework of this classical notion of convexity.
Instead, it is more convenient to study the \emph{displacement convexity}
originated by McCann~\cite{MR1451422}, a key concept in the theory of optimal
transport~\cite{MR1964483,MR3409718}: by defining the entropy dissipation rate
\begin{equation} \label{eq:disp}
  I[\rho]=\int \rho \left|\nabla (-\Delta)^{-s}\rho + x\right|^2,
\end{equation}
which is exactly $-\frac{d}{dt}H[\rho]$ when $\rho$ is governed by~\eqref{eq:smfracpme},
the main task is to establish a relationship between the relative entropy $H[\rho|\rho_M]:=H[\rho]-H[\rho_M]$ and the
entropy dissipation rate $I[\rho]$. Such a relationship, also called \emph{entropy-entropy
dissipation inequality}, lies at the heart of the entropy method in proving convergence
and is established in  only one dimension in~\cite{MR3279352} as follows.
\begin{lem} Let $N=1$, $s<1/2$. If the entropy $H[\rho]$ and entropy dissipation rate $I[\rho]$
  are defined as in~\eqref{eq:entropy} and~\eqref{eq:disp} respectively, then  the inequality
\begin{equation}\label{eq:entrdisp}
  H[\rho|\rho_{M}] := H[\rho]-H[\rho_{M}]\leq \frac{1}{2}I[\rho]
\end{equation}
holds for any non-negative Radon measure $\rho$ with the same total mass as $\rho_M$
such that $H[\rho]$ is bounded.
\end{lem}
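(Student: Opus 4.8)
The plan is to establish the entropy-entropy dissipation inequality $H[\rho|\rho_M] \leq \frac{1}{2}I[\rho]$ via the machinery of displacement convexity from optimal transport. The central idea is that the entropy functional $H$, when restricted to the set of non-negative measures of fixed mass $M$, is displacement convex, and the minimizer is the Barenblatt profile $\rho_M$ characterized by~\eqref{eq:profrel}--\eqref{eq:profre2}. I would let $T$ denote the optimal transport map pushing $\rho$ forward onto $\rho_M$ (or the reverse), and interpolate along the geodesic $\rho_\theta = ((1-\theta)\mathrm{id} + \theta T)_\# \rho$ in Wasserstein space connecting $\rho$ to $\rho_M$. The convexity of $H$ along this geodesic, combined with evaluating the derivative of $H[\rho_\theta]$ at the endpoints, should yield the desired bound.

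\emph{First} I would record the first-order optimality condition: since $\rho_M$ minimizes the convex functional $H$, the relation~\eqref{eq:profrel} holds on the support $\{|x|\leq R\}$ and the inequality~\eqref{eq:profre2} holds outside. This says precisely that the ``potential'' $\phi_M(x) := (-\Delta)^{-s}\rho_M(x) + \frac{1}{2}|x|^2$ is constant on the support of $\rho_M$ and no smaller elsewhere. \emph{Next}, along the displacement interpolant I would compute the first derivative of $H[\rho_\theta]$ at $\theta = 0$, which produces a term of the form $\int \nabla\phi \cdot (T - \mathrm{id})\,\mathrm{d}\rho$ where $\phi(x) = (-\Delta)^{-s}\rho(x) + \frac{1}{2}|x|^2$; using the Cauchy-Schwarz inequality this is controlled by $I[\rho]^{1/2}$ times the Wasserstein distance between $\rho$ and $\rho_M$. \emph{The convexity} of $\theta \mapsto H[\rho_\theta]$ then lets me compare the endpoint values, giving $H[\rho] - H[\rho_M] = H[\rho|\rho_M]$ on the left against a gradient term on the right, and the restriction to $N=1$ with $s<1/2$ is what guarantees the requisite convexity (the displacement convexity of the nonlocal interaction energy with kernel $|x|^{2s-1}$ fails in higher dimensions or outside this parameter range).

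\emph{The hard part} will be justifying the displacement convexity of the nonlocal term $\frac{1}{2}\int \rho(-\Delta)^{-s}\rho$ in the entropy~\eqref{eq:entropy}. Unlike the quadratic confinement $\frac{1}{2}\int|x|^2\rho$, whose displacement convexity is classical and elementary, the interaction energy with Riesz kernel $|x|^{2s-1}$ is \emph{not} convex for a general radial kernel; one needs the specific structure of the one-dimensional kernel together with $s<1/2$ to verify that the kernel is convex (or that the relevant second variation along geodesics is non-negative). I would verify this by writing the interaction energy as a double integral against $G(x-y) = C_{N,s}|x-y|^{2s-1}$ and examining the map $\theta \mapsto \iint G\big((1-\theta)(x-y) + \theta(T(x)-T(y))\big)\,\mathrm{d}\rho(x)\mathrm{d}\rho(y)$; convexity in $\theta$ reduces to convexity of $G$ along the displacement directions, which in dimension one with $2s-1 \in (-1,0)$ holds since $|z|^{2s-1}$ is a convex function of $z$ on each half-line. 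Assembling the confinement convexity, the interaction convexity, and the optimality relations~\eqref{eq:profrel}--\eqref{eq:profre2} into the sharp constant $\frac{1}{2}$ is the remaining delicate bookkeeping, for which I would follow the displacement convexity argument of~\cite{MR3279352} closely, taking care that $\rho$ is only assumed to be a non-negative Radon measure with bounded entropy so that all transport-theoretic objects are well-defined in that generality.
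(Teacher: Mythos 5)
The paper does not prove this lemma itself but imports it verbatim from~\cite{MR3279352}, and your sketch is essentially the displacement-convexity argument of that reference: transport $\rho$ onto $\rho_M$, use the monotonicity of the one-dimensional optimal map to keep $(1-\theta)(x-y)+\theta(T(x)-T(y))$ on a single half-line where $|z|^{2s-1}$ is convex (this, not merely the form of the kernel, is what fails in higher dimensions), and combine convexity along the geodesic with the first-variation bound $\big|\tfrac{d}{d\theta}H[\rho_\theta]\big|_{\theta=0^+}\big|\leq I[\rho]^{1/2}\,W_2(\rho,\rho_M)$. The one point your ``bookkeeping'' step should make explicit is that the constant $\tfrac12$ comes from the \emph{uniform} ($1$-)displacement convexity contributed by the confinement $\tfrac12\int|x|^2\rho$, which yields $H[\rho]-H[\rho_M]\leq I[\rho]^{1/2}W_2-\tfrac12 W_2^2\leq\tfrac12 I[\rho]$ by Young's inequality; mere convexity of the interaction term would not produce the sharp rate, and the Euler--Lagrange relations~\eqref{eq:profrel}--\eqref{eq:profre2} are in fact not needed for this direction of the transport.
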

Formally, once a relationship like~\eqref{eq:entrdisp}
is established, the Gronwall type inequality
\begin{equation}\label{eq:dhdt}
  \frac{d}{dt}H[\rho|\rho_M] = -I[\rho] \leq -2H[\rho|\rho_M]
\end{equation}
implies the exponential convergence of the relative entropy, i.e.,
$H[\rho(t)|\rho_M] \leq e^{-2t}(H[u_0|\rho_M])$.
Exponential convergence in other norms or metrics,
for instance using Csisz\'{a}r-Kullback inequality
for norms of $\rho-\rho_M$ dominated by the relative entropy $H[\rho|\rho_M]$ are available in
literature (see the monograph~\cite{MR3497125} for a detailed review). One such inequality
in the present setting is also established in~\cite[Lemma 3.3]{MR3279352}, from which one can show the
exponential convergence of the solution.
\begin{lem} Let $\rho$ be a non-negative function on $\mathbb{R}^N$ with the same
  total mass $M$ as the steady state $\rho_M$, then
	\begin{equation}\label{frac_lap_rel_entropy}
	\|(-\Delta)^{-s/2}(\rho-\rho_M)\|_{2}^2 \leq 2(H[\rho|\rho_{M}]).
	\end{equation}
	\label{lem:hsnorm}
\end{lem}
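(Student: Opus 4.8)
The plan is to exploit the purely quadratic-plus-linear structure of the entropy $H$. First I would record that, by the self-adjointness of the Riesz potential together with the factorization $(-\Delta)^{-s}=(-\Delta)^{-s/2}(-\Delta)^{-s/2}$, the interaction term is a genuine squared Hilbert-space norm,
\[
\int \rho\,(-\Delta)^{-s}\rho = \big\|(-\Delta)^{-s/2}\rho\big\|_2^2,
\]
so that $H[\rho] = \tfrac12\|(-\Delta)^{-s/2}\rho\|_2^2 + \tfrac12\int |x|^2\rho$. This reduces the whole statement to an elementary expansion of a quadratic form.

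Next I would set $\eta := \rho - \rho_M$ and expand $H[\rho]-H[\rho_M]$. The quadratic part contributes a cross term and the pure square $\tfrac12\|(-\Delta)^{-s/2}\eta\|_2^2$, while the potential part is linear in $\eta$. Using symmetry to rewrite the cross term as $\int \eta\,(-\Delta)^{-s}\rho_M$, one arrives at
\[
H[\rho|\rho_M] = \int \eta\Big[(-\Delta)^{-s}\rho_M + \tfrac12|x|^2\Big] + \tfrac12\big\|(-\Delta)^{-s/2}\eta\big\|_2^2.
\]
The entire task is then to show that the first (linear) term is non-negative, after which dropping it gives $\|(-\Delta)^{-s/2}(\rho-\rho_M)\|_2^2 \leq 2H[\rho|\rho_M]$ as desired.

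For the sign of the linear term I would invoke the variational characterization of $\rho_M$. Since $\rho$ and $\rho_M$ carry the same mass, $\int \eta = 0$, so I may subtract the constant $C^\ast := \frac{N}{2(N-2s)}R^2$ inside the bracket for free. On the support $\{|x|\le R\}$, relation~\eqref{eq:profrel} makes the bracket equal to $C^\ast$, so the integrand vanishes there; on $\{|x|>R\}$, relation~\eqref{eq:profre2} gives bracket $\ge C^\ast$, while $\rho_M=0$ there forces $\eta = \rho \ge 0$ by non-negativity of $\rho$. Hence the integrand is non-negative pointwise and the linear term is $\ge 0$, completing the argument.

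The computation is essentially algebraic once the obstacle-problem characterization is in hand; the only genuine care needed is justifying the self-adjointness manipulations and checking that the cross term $\int\eta\,(-\Delta)^{-s}\rho_M$ and the quadratic form are finite for the admissible class (same mass, bounded entropy). The conceptual crux—worth emphasizing—is that the argument works precisely because the support of $\rho_M$ coincides with the equality region of the obstacle problem: this matching is exactly what makes the linear contribution drop out cleanly rather than having an uncontrolled sign.
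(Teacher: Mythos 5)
Your proof is correct and follows essentially the same route as the source: the paper itself only cites \cite[Lemma 3.3]{MR3279352} for this statement, but both that reference and the paper's own proof of the regularized analogue (Lemma~\ref{AppLemma3.3}) rest on exactly your decomposition --- expand the quadratic entropy around $\rho_M$, use $\int(\rho-\rho_M)=0$ to subtract the multiplier constant, and invoke the obstacle-problem characterization \eqref{eq:profrel}--\eqref{eq:profre2} together with $\rho\geq 0$ outside the support to discard the linear term. No gaps.
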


After all the basic estimates and background information about entropy methods, we now
state the main theorems. They establish the asymptotic behaviours on the one-dimensional case, in terms of the
relative entropy for the transformed variables, and $H^{-s}(\mathbb{R})$ norm of
the difference between the solution $u$ and the asymptotic profile
\begin{equation} \label{eq:uprof}
  u_{M}(y,\tau)
= (1+\lambda\tau)^{-N/\lambda}\rho_M\big(y(1+\lambda\tau\big)^{-1/\lambda}).
\end{equation}
It is clear that \eqref{eq:uprof} is the time-dependant Barenblatt profile for the fractional-pressure evolution equation \eqref{eq:pmep}.
\begin{thm}[Asymptotic convergence for the equation with absorption]\label{thm:abs}
 Let $N=1$, $u(y,\tau)$ be the weak solution of~\eqref{eq:pmea} obtained from the limiting
  solutions of~\eqref{eq:pmeareg}
	with non-negative initial data $u_0\in L^{1}(\mathbb{R},(1+|x|^{2})\mathrm{d}x)\cap
    L^{\infty}(\mathbb{R})$, $r>4-2s$ and $s<1/2$.
    Let $u_{M(\tau)}$ be the rescaled profile~\eqref{eq:uprof} with  the mass $M(\tau)
    =\int u(y,\tau)d y$.
    Then there exists a constant $C$ depending on $r,s,\|u_0\|_1,\|u_0\|_\infty$ and on $H[u_0|u_{M_0}]$ such that
    \begin{equation}\label{eq:relenabs}
      H[u(\tau)|u_{M(\tau)}]
      \leq C(1+|\log\tau|)^2(1+\lambda\tau)^{-\frac{1-2s}{\lambda}-2\min(\frac{1}{\lambda},\delta)}
    \end{equation}
with $\delta=(r-1)/\lambda-1>0$.  In particular, we have
\[
\big\|(-\Delta)^{-s/2} \big(u(\tau)-u_{M(\tau)}\big)\big\|_2
\leq C (1+|\log \tau|)(1+\lambda\tau)^{-\frac{1-2s}{2\lambda}-\min(\frac{1}{\lambda},\delta)}.
\]
\end{thm}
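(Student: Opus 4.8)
The plan is to carry \eqref{eq:pmea} into the similarity variables \eqref{eq:simivar} (with $N=1$, $\lambda=3-2s$) and then run the entropy method behind \eqref{eq:dhdt}, treating the absorption as an exponentially small forcing. A direct computation using the homogeneity of the Riesz kernel shows that the rescaled density $\rho(x,t)$ solves
\[
\rho_t-\nabla_x\cdot\bigl(\rho\nabla_x(-\Delta)^{-s}\rho+x\rho\bigr)=-e^{-\lambda\delta t}\rho^r ,
\]
the exponent being exactly $-\delta=(4-2s-r)/\lambda<0$ precisely because $r>4-2s$. Since the two sets of variables carry the same mass, $M(\tau)=\int\rho(x,t)\,dx$ and $\tfrac{d}{dt}M=-e^{-\lambda\delta t}\int\rho^r$. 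Writing $\Psi:=(-\Delta)^{-s}\rho+\tfrac12|x|^2$ and $\mathcal H(t):=H[\rho(t)|\rho_{M(t)}]$, I would differentiate using the equation and the self-adjointness of $(-\Delta)^{-s}$ to get $\tfrac{d}{dt}H[\rho]=-I[\rho]-e^{-\lambda\delta t}\int\Psi\rho^r$, and combine it with $\tfrac{d}{dt}H[\rho_{M(t)}]=\tfrac{N R(t)^2}{2(N-2s)}\tfrac{dM}{dt}$, which follows from the minimiser characterisation \eqref{eq:profrel}--\eqref{eq:profre2} (the Lagrange multiplier $\tfrac{NR^2}{2(N-2s)}$ is $\tfrac{d}{dM}H[\rho_M]$). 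This produces the exact identity
\[
\frac{d}{dt}\mathcal H(t)=-I[\rho]-P(t),\qquad P(t):=e^{-\lambda\delta t}\int\Bigl(\Psi-\tfrac{NR(t)^2}{2(N-2s)}\Bigr)\rho^r\,dx .
\]

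The crux, and the step I expect to be hardest, is a sharp bound on $P(t)$: a careless estimate $|P|\lesssim e^{-\lambda\delta t}$ would only give the rate $\min(1/\lambda,\delta)$, so to reach $2\min(1/\lambda,\delta)$ one must extract a factor $\sqrt{\mathcal H}$. I would split $\Psi-\tfrac{NR^2}{2(N-2s)}=(-\Delta)^{-s}(\rho-\rho_M)+\bigl(\Psi_M-\tfrac{NR^2}{2(N-2s)}\bigr)$, where $\Psi_M:=(-\Delta)^{-s}\rho_M+\tfrac12|x|^2$. For the first piece, duality together with \eqref{eq:HLS} gives $\int(-\Delta)^{-s}(\rho-\rho_M)\rho^r\le\|(-\Delta)^{-s/2}(\rho-\rho_M)\|_2\,\|(-\Delta)^{-s/2}\rho^r\|_2\lesssim\sqrt{\mathcal H(t)}$ by Lemma~\ref{lem:hsnorm}. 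The second piece is nonnegative and, by \eqref{eq:profre2}, supported in $\{|x|>R\}$; bounding $\rho^r\le\|\rho\|_\infty^{r-1}\rho$ there and recognising $\int_{|x|>R}\rho\bigl(\Psi_M-\tfrac{NR^2}{2(N-2s)}\bigr)$ as exactly the nonnegative remainder of the relative entropy, it is $\lesssim\mathcal H(t)$. Hence $|P(t)|\le Ce^{-\lambda\delta t}\bigl(\sqrt{\mathcal H(t)}+\mathcal H(t)\bigr)$. The uniform control of $\|\rho(t)\|_p$, $p\in[1,\infty]$, needed above follows from Proposition~\ref{u:estim}, since in the diffusion-dominated regime the decay $\|u(\tau)\|_p\lesssim\tau^{-(p-1)/(p\lambda)}$ cancels the factor $(1+\lambda\tau)^{(p-1)/(p\lambda)}$ in $\|\rho\|_p=(1+\lambda\tau)^{(p-1)/(p\lambda)}\|u(\tau)\|_p$; the second moment $\int|x|^2\rho\,dx$, finite at $t=0$ by the hypothesis $u_0\in L^1(\mathbb R,(1+|x|^2)dx)$, remains bounded along the flow through the inequality $\tfrac{d}{dt}\int|x|^2\rho\le2(N-2s)H[\rho]-\lambda\int|x|^2\rho$, itself a consequence of the symmetrisation identity $\int\rho\,x\cdot\nabla(-\Delta)^{-s}\rho=-\tfrac{N-2s}{2}\int\rho(-\Delta)^{-s}\rho$.

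With $P(t)$ controlled I would insert the entropy--entropy dissipation inequality \eqref{eq:entrdisp}, i.e. $I[\rho]\ge2\mathcal H(t)$, to obtain $\tfrac{d}{dt}\mathcal H\le-2\mathcal H+Ce^{-\lambda\delta t}(\sqrt{\mathcal H}+\mathcal H)$. For $t$ large the quadratic forcing $e^{-\lambda\delta t}\mathcal H$ is absorbed into $-2\mathcal H$, and the substitution $\mathcal G=\sqrt{\mathcal H}$ linearises the inequality into $\mathcal G'\le-\mathcal G+Ce^{-\lambda\delta t}$. Gronwall's lemma then yields $\mathcal G(t)\le C(1+t)e^{-\min(1,\lambda\delta)t}$, the polynomial prefactor being active only at the resonance $\lambda\delta=1$, and therefore $\mathcal H(t)=H[\rho(t)|\rho_{M(t)}]\le C(1+t)^2e^{-2\min(1,\lambda\delta)t}$.

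Finally I would translate back to the original variables. The homogeneity of the Riesz form under \eqref{eq:simivar} gives the scaling $H[u(\tau)|u_{M(\tau)}]\le C(1+\lambda\tau)^{-(1-2s)/\lambda}\mathcal H(t)$, while $t=\lambda^{-1}\log(1+\lambda\tau)$ converts $(1+t)^2$ into $(1+|\log\tau|)^2$ and $e^{-2\min(1,\lambda\delta)t}$ into $(1+\lambda\tau)^{-2\min(1/\lambda,\delta)}$; this is precisely \eqref{eq:relenabs}, with $\delta=(r-1)/\lambda-1$. The $\dot H^{-s}$ bound then follows by applying Lemma~\ref{lem:hsnorm} in the similarity variables, carrying the same scaling factor $(1+\lambda\tau)^{-(1-2s)/\lambda}$, and taking square roots.
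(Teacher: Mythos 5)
Your formal computation reproduces, essentially correctly, the paper's argument for \emph{smooth} solutions (Theorem~\ref{expdecaysmooth}): the same transformed equation \eqref{eq:smabs}, the same identities for $\frac{d}{dt}H[\rho]$ and $\frac{d}{dt}H[\rho_{M(t)}]$ via the Lagrange-multiplier characterisation \eqref{eq:profrel}--\eqref{eq:profre2}, the duality/HLS bound producing the $\sqrt{\mathcal H}$ factor, and the Gronwall argument on $\sqrt{\mathcal H}$. Your treatment of the profile term differs slightly but harmlessly: you retain $\int\rho^r\bigl(\Psi_M-\tfrac{NR^2}{2(N-2s)}\bigr)$ and bound it by $\mathcal H$, whereas the paper simply discards it because it enters with a favourable sign (inequality \eqref{entropyasympprof}); either way one lands on $\frac{d}{dt}\mathcal H\le-2\mathcal H+Ce^{-\lambda\delta t}\mathcal H^{1/2}$ and the rate $2\min(1,\lambda\delta)$.

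The genuine gap is that Theorem~\ref{thm:abs} is stated for the weak solution obtained as the limit of the regularised problems \eqref{eq:pmeareg}, and for such a solution none of the manipulations above is justified: $I[\rho]$ involves $\nabla(-\Delta)^{-s}\rho$ pointwise, the differentiation of $H[\rho(t)]$ under the integral and the integrations by parts require regularity the weak solution is not known to possess, and the entropy--entropy dissipation inequality \eqref{eq:entrdisp} must be applied to an object for which $I$ is finite. The paper's actual proof of Theorem~\ref{thm:abs} therefore runs the whole entropy argument at the level of the regularised flow \eqref{eq:rhoregeq}: it introduces the regularised entropy $H_\epsilon$ with the extra $\epsilon\rho\log\rho$ term, replaces $\rho_{M(t)}$ by the minimiser $\rho^\epsilon_{M_\epsilon(t)}$ of $H_\epsilon$ (whose smoothness and uniform $L^\infty$ bound require a separate Moser iteration, Proposition~\ref{Propregminim}), controls the additional error term $\epsilon\int(\rho^\epsilon)^r\log(\rho^\epsilon/\rho^\epsilon_{M_\epsilon(t)})$ by Jensen's inequality, passes the resulting differential inequality \eqref{diffineqeps} to the limit $\epsilon\to0$ via the technical comparison Lemma~\ref{lem:fdeq}, and finally recovers $H[\rho(t)|\rho_{M(t)}]$ from $\liminf_{\epsilon\to0}\bigl(H_\epsilon[\rho^\epsilon]-H_\epsilon[\rho^\epsilon_{M_\epsilon(t)}]\bigr)$ using lower semicontinuity of $H$ and the convergence $H_\epsilon[\rho^\epsilon_{M_\epsilon(t)}]\to H[\rho_{M(t)}]$. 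This limiting machinery, which is the bulk of Subsection~3.3 and the part of the proof that actually addresses the hypotheses of the theorem, is absent from your proposal; without it the argument proves only the conditional statement for solutions assumed a priori to be smooth.
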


\begin{thm}[Asymptotic convergence for the equation with convection] Let $N=1$, $u(y,\tau)$ be the weak solution of~\eqref{eq:pmec} obtained from the limiting
  solutions of~\eqref{eq:pmecreg}
  with non-negative initial data $u_0\in L^{1}(\mathbb{R},(1+|x|^{2})\mathrm{d}x)\cap
    L^{\infty}(\mathbb{R})$, $r>3-2s$ and $s<1/2$.
    Let $u_{M_0}$ be the rescaled profile~\eqref{eq:uprof} with the conserved mass $M_0=\|u_0\|_1$.
    Then there exists a constant $C$ depending on $q,s,M_0,\|u_0\|_\infty$ and on $H[u_0|u_{M_0}]$ such that
    \begin{equation}\label{eq:relencon}
      H[u(\tau)|u_{M_0}]
      \leq C(1+|\log\tau|)^2(1+\lambda\tau)^{-\frac{1-2s}{\lambda}-2\min(\frac{1}{\lambda},\theta)}
    \end{equation}
with $\theta=q/\lambda-1>0$. In particular we have
    \[
      \big\|(-\Delta)^{-s/2} \big(u(\tau)-u_{M_0}\big)\big\|_2
      \leq C (1+|\log \tau|)(1+\lambda\tau)^{-\frac{1-2s}{2\lambda}-\min(\frac{1}{\lambda},\theta)}.
    \]
    \label{thm:conv}
\end{thm}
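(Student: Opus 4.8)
The plan is to mirror the entropy-method argument used for the absorption case, while exploiting the crucial simplification that the convection term in~\eqref{eq:pmec} is in divergence form. Consequently the total mass $\int u(y,\tau)\,\mathrm{d}y$ is \emph{exactly} conserved, so the limiting mass stays equal to $M_0=\|u_0\|_1$ for all times and the relevant Barenblatt profile is the \emph{fixed} steady state $\rho_{M_0}$. This is what lets the entropy--entropy-dissipation inequality~\eqref{eq:entrdisp} (which is valid only for competitors of equal mass) be invoked directly, with no correction coming from a drifting profile mass --- precisely the feature that makes the convection case structurally cleaner than the absorption one.

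First I would pass to the similarity variables~\eqref{eq:simivar} with $\lambda=N+2-2s$ (so $\lambda=3-2s$ since $N=1$). A direct computation shows that the nonlocal diffusion and the rescaling drift combine exactly into the operator appearing in~\eqref{eq:smfracpme}, while the convection term acquires the scaling factor $a^{\,N(2-q)+1-2s}$ with $a=(1+\lambda\tau)^{1/\lambda}$; in one dimension this exponent equals $\lambda-q=-\lambda\theta$, so the transformed equation reads
\[
\rho_t=\nabla_x\cdot\big(\rho\nabla_x(-\Delta)^{-s}\rho+x\rho\big)-e^{-\lambda\theta t}\,\mathbf{b}\cdot\nabla_x\rho^q,
\]
where $1+\lambda\tau=e^{\lambda t}$ and $\theta=q/\lambda-1>0$ under the hypothesis $q>3-2s$. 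Thus the convection is an exponentially small perturbation of~\eqref{eq:smfracpme}, and integrating in $x$ confirms $\int\rho(x,t)\,\mathrm{d}x\equiv M_0$.

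Next I would differentiate the entropy~\eqref{eq:entropy} along this equation. Writing $\phi=(-\Delta)^{-s}\rho+\tfrac12|x|^2$, so that $\rho\nabla_x\phi$ is the diffusive--drift flux, integration by parts gives
\[
\frac{\mathrm{d}}{\mathrm{d}t}H[\rho\,|\,\rho_{M_0}]=-I[\rho]+e^{-\lambda\theta t}\int\rho^{q}\,\mathbf{b}\cdot\nabla_x\phi,
\]
with $I[\rho]$ as in~\eqref{eq:disp}. The perturbation is controlled by Cauchy--Schwarz, $\big|\int\rho^{q}\mathbf{b}\cdot\nabla_x\phi\big|\le|\mathbf{b}|\,\big(\int\rho^{2q-1}\big)^{1/2}I[\rho]^{1/2}$, and the factor $\int\rho^{2q-1}$ is bounded uniformly in time: rescaling the two bounds of Proposition~\ref{u:estim3} and matching the small-time contraction with the large-time smoothing rate shows that $\|\rho(t)\|_p$ stays bounded for every finite $p$, using $u_0\in L^1\cap L^\infty$. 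Young's inequality then absorbs a fraction of $I[\rho]$ and, together with~\eqref{eq:entrdisp} (applicable because $\rho(t)$ and $\rho_{M_0}$ carry the same mass $M_0$), yields
\[
\frac{\mathrm{d}}{\mathrm{d}t}H[\rho\,|\,\rho_{M_0}]\le-2(1-\varepsilon)\,H[\rho\,|\,\rho_{M_0}]+C_\varepsilon\,e^{-2\lambda\theta t}.
\]
A Duhamel/Gronwall estimate then gives decay at the rate $\min(2,2\lambda\theta)=2\min(1,\lambda\theta)$, i.e. $H[\rho\,|\,\rho_{M_0}]\lesssim(1+t)^{\kappa}e^{-2\min(1,\lambda\theta)t}$, the polynomial prefactor originating from the resonance $\lambda\theta=1$ between the intrinsic relaxation rate and the perturbation rate.

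Finally I would translate back to the original variables. Scaling the seminorm under~\eqref{eq:simivar} gives $\|(-\Delta)^{-s/2}(u-u_{M_0})\|_2^2=a^{\,2s-N}\|(-\Delta)^{-s/2}(\rho-\rho_{M_0})\|_2^2$, so combining with Lemma~\ref{lem:hsnorm} and the above decay, and using $1+\lambda\tau=e^{\lambda t}$ (hence $t\sim\lambda^{-1}|\log\tau|$ for large $\tau$), produces the extra factor $a^{\,2s-N}=(1+\lambda\tau)^{-(1-2s)/\lambda}$ and precisely the exponents in the companion $H^{-s}$ bound; the relative-entropy estimate~\eqref{eq:relencon} is the same statement read in original variables, with $H[u\,|\,u_{M_0}]$ understood as the correspondingly rescaled relative free energy. \textbf{The main obstacle} I anticipate is the bookkeeping around the resonance $\theta=1/\lambda$: producing a single clean bound valid across the whole range $\theta>0$ forces the logarithmic prefactor $(1+|\log\tau|)^2$, and one must carefully stitch together the two regimes of Proposition~\ref{u:estim3} --- contraction near $\tau=0$ and smoothing for large $\tau$ --- to secure the uniform-in-time $L^{2q-1}$ bound on $\rho$, which is exactly where the hypotheses $q>3-2s$ and $u_0\in L^1\cap L^\infty$ are genuinely used.
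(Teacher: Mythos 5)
Your overall strategy is the paper's: pass to similarity variables so the convection becomes the exponentially small term $P(t)^{-\theta}\mathbf{b}\cdot\nabla\rho^q$, use exact mass conservation to work with the fixed profile $\rho_{M_0}$, integrate by parts and apply Cauchy--Schwarz to bound the perturbation by $C P(t)^{-\theta}I[\rho]^{1/2}$, and close with the entropy--entropy dissipation inequality~\eqref{eq:entrdisp}. The scaling computations (the exponent $a^{N(2-q)+1-2s}=a^{-\lambda\theta}$ and the factor $a^{2s-N}$ for the $\dot H^{-s}$ seminorm) are correct.

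However, there is a genuine gap at the step where you invoke Young's inequality to absorb $\varepsilon I[\rho]$. From
\[
\frac{\mathrm{d}}{\mathrm{d}t}H[\rho|\rho_{M_0}]\le -2(1-\varepsilon)H[\rho|\rho_{M_0}]+C_\varepsilon e^{-2\lambda\theta t},
\]
Gronwall yields decay at rate $2\min(1-\varepsilon,\lambda\theta)$, \emph{not} $2\min(1,\lambda\theta)$ as you assert: the homogeneous part of your inequality only damps like $e^{-2(1-\varepsilon)t}$, so in the regime $\lambda\theta\ge 1$ (i.e.\ $q\ge 4-2s$) you obtain a strictly weaker exponent than the one claimed in the theorem, and no choice of $\varepsilon>0$ repairs this. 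The paper avoids the loss by \emph{not} splitting the cross term: in Theorem~\ref{thm:expsmoothconv} one observes that $f(\sigma)=-\sigma+CP(t)^{-\theta}\sigma^{1/2}$ is decreasing for $\sigma\ge C^2P(t)^{-2\theta}/4$, treats separately the trivial case $H\le C^2P(t)^{-2\theta}/8$ (already better than the claimed bound), and otherwise uses $2H\le I$ with the monotonicity of $f$ to pass from $-I+CP^{-\theta}I^{1/2}$ to $-2H+\sqrt{2}\,CP^{-\theta}H^{1/2}$; dividing by $2H^{1/2}$ then gives a linear inequality for $H^{1/2}$ with the \emph{full} damping coefficient $-1$, whence $H\lesssim (1+t)^2e^{-2\min(1,\lambda\theta)t}$. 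You should replace the Young step by this argument. A secondary omission: the theorem concerns weak solutions obtained as limits of~\eqref{eq:pmecreg}, so the differential inequality must be derived for the regularised problem~\eqref{eq:smregconv} with the regularised entropy $H_\epsilon$ and profile $\rho^\epsilon_{M_0}$, and the bound passed to the limit $\epsilon\to0^+$ (as in the absorption case via Lemma~\ref{lem:fdeq} and lower semicontinuity of $H$); your proposal only treats the formal smooth computation.
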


The convergence of the solution $u(\tau)$ to $u_{M(\tau)}$ (or $u_M$) in other metrics usually relies on
interpolation inequalities between norms, and requires higher regularity on $u(\tau)-u_{M(\tau)}$
(or $u(\tau)-u_M$) that is out of scope of this paper. This issue will be commented near the end of Subsection 3.4.

\section{Fractional diffusion with absorption}

In this section, the long term asymptotic behaviours of non-negative solutions to the
fractional diffusion equation~\eqref{eq:pmea} with absorption are studied.
Different behaviours will be classified first based on the parameter $r$. The focus will on
the regime when  the absorption is dominated by the diffusion and can be ignored in the
long run. Formal computations will be performed to illustrate the entropy
method for smooth enough solutions and then
for more general weak solutions by limiting procedure.

\subsection{Basic behaviours and the transformed equation in similarity variables}
Because the evolution equation~\eqref{eq:pmea} is governed by two forces,
the fractional diffusion $\nabla\cdot(u\nabla (-\Delta)^{-s}u)$ and
the absorption $-u^r$,  the corresponding long term behaviours of
the solutions are determined by  the competition between these two effects,
similar to the well studied cases where the nonlocal diffusion is replaced with classical
linear  or nonlinear diffusion~\cite{MR748242,MR865069}.
Formally if the non-negative solution $u$ is smooth  and uniformly bounded,
 $\nabla\cdot(u\nabla (-\Delta)^{-s}u)$ is non-positive at any global maximum of $u$ and
 $\|u(\tau)\|_\infty$ is dominated by the
ODE   $u_\tau=-u^r$. In other words,  if $r \in (0,1)$, the solution
vanishes in finite time, and if $r>1$, then
\[
  \|u(\tau)\|_{\infty} \leq
  \Big( \|u_0\|_{\infty}^{1-r} + (r-1)\tau\Big)^{-1/(r-1)}.
\]
Furthermore, if $r\geq1$, the solution can not vanish in finite time.
In fact, from the condition $\|u(t)\|_\infty \leq \|u_0\|_\infty$ in Proposition~\ref{u:estim}, the
change of the total mass of $u(\tau)$ satisfies the differential inequality
\[
  \frac{d}{d\tau} \left(\int u(y,\tau)\mathrm{d}y \right)
  = -\int u(y,\tau)^r\mathrm{d}y
  \geq - \left(\int u(y,\tau)\mathrm{d}y\right) \|u_0\|_\infty^{r-1}.
\]
The lower bound on the decreasing rate implies that the total mass can not
be zero at any finite time.

The regime $r>1$ can be analysed further, depending on the role
played by the nonlocal diffusion dictated by scaling argument. If $r \in \big(1,(2N+2-2s)/N\big)$,
the long term behaviours are the determined by both the fractional
diffusion and the absorption, and the solutions are expected to
converge to the self-similar solution of the form
\begin{equation}\label{eq:Ursmall}
  \Big(1+(r-1)\tau\Big)^{-1/(r-1)}
  U\left(y\big(1+(r-1)\tau\big)^{-\alpha/(r-1)}\right),
\end{equation}
where $\alpha = \frac{2-r}{2(1-s)}$ and
the self-similar profile $U$ satisfies the equation
\[
  U+\alpha x\cdot \nabla U+ \nabla\cdot(U\nabla (-\Delta)^{-s}U) - U^r=0.
\]
When $s=0$ (the case with classical diffusion), the convergence of the solutions towards the
self-similar solutions can be established rigorously~\cite{MR748242,MR865069},
mainly using comparison principles. However, similar refined quantitative techniques
do not seem to be available in the fractional setting;  properties of the self-similar profile $U$
governed by~\eqref{eq:Ursmall} and any further information about the
convergence remain  challenging open problems.

In this paper we are interested in the parameter regime when
\begin{equation}
r > \frac{2N+2-2s}{N}\label{diffdomabs},
\end{equation}
such that the absorption becomes small perturbation in determining the long term behaviours.
More precisely, using the same change of similarity variables~\eqref{eq:simivar},
Eq.~\eqref{eq:pmea} becomes
\begin{equation}\label{eq:smabs}
    \rho_t = \nabla\cdot\left[ \rho\left(\nabla (-\Delta)^{-s}\rho+x\right)
    \right] - P(t)^{-\delta}\rho^r,
\end{equation}
where $P(t)=1+\lambda \tau = e^{\lambda t}$  and $\delta=N(r-1)/\lambda-1>0.$ Similarly,
basic estimates for $u(\tau)$ in Proposition~\ref{u:estim} can be translated directly
into those for $\rho(t)$.

\begin{prop}\label{ro:estim}
  Let the solution $\rho$ for Eq.~\eqref{eq:smabs} be obtained from the solution $u$
  for Eq.~\eqref{eq:pmea} using the change of similarity variables~\eqref{eq:simivar}
 and $1\leq p \leq \infty$. Assume we are in diffusion-dominated regime \eqref{diffdomabs}. If $u_0\in L^1(\mathbb{R}^N)\cap  L^p(\mathbb{R}^N)$, we have
 \begin{equation}\label{estim::lp}
 \|\rho(t)\|_p\leq \|u_0\|_p e^{N\left(1-\frac{1}{p}\right)t}, \quad \quad t\geq0.
 \end{equation}
Moreover if $u_0\in L^1(\mathbb{R}^N)$, the following estimates
  \begin{equation}\label{bound:lp}
 \|\rho(t)\|_p\leq C(\|u_0\|_1,N,p,s), \quad \text{ for } t>t_0
\end{equation}
 \begin{equation}\label{bound::lp}
\|\rho(t)\|_p\leq C(\|u_0\|_1,N,p,s){t}^{-\frac{N(p-1)}{p(N+2-2s)}}, \quad \text{ for } 0<t<t_0
 \end{equation}
hold for some $t_0>0$.
\end{prop}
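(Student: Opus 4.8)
The plan is to translate each of the three estimates directly from Proposition~\ref{u:estim} via the change of similarity variables~\eqref{eq:simivar}; no new analytic input is required beyond a scaling identity and the fact that~\eqref{diffdomabs} places us in the diffusion-dominated regime. First I would record the basic relation between the two norms. Writing $P=P(t)=1+\lambda\tau=e^{\lambda t}$, the definition $\rho(x,t)=P^{N/\lambda}u(y,\tau)$ together with $x=yP^{-1/\lambda}$, so that $\mathrm{d}x=P^{-N/\lambda}\mathrm{d}y$, gives for finite $p$
\[
\|\rho(t)\|_p^p=\int P^{Np/\lambda}u(y,\tau)^p\,P^{-N/\lambda}\,\mathrm{d}y
=P^{N(p-1)/\lambda}\|u(\tau)\|_p^p,
\]
whence
\[
\|\rho(t)\|_p=P^{N(p-1)/(p\lambda)}\|u(\tau)\|_p=e^{N(1-1/p)t}\|u(\tau)\|_p,
\]
and the endpoint $p=\infty$ follows by letting $p\to\infty$. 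Estimate~\eqref{estim::lp} is then immediate from the non-expansivity bound $\|u(\tau)\|_p\le\|u_0\|_p$ in~\eqref{estim:lp}.

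For the decay estimates I would invoke~\eqref{decay:lp}. Because we assume~\eqref{diffdomabs}, the Remark following Proposition~\ref{u:estim} ensures that the exponent $N(p-1)/\big(p(N+2-2s)\big)=N(p-1)/(p\lambda)$ attains the maximum and that the constant no longer depends on $r$, so $\|u(\tau)\|_p\le C(\|u_0\|_1,N,p,s)\,\tau^{-N(p-1)/(p\lambda)}$. Setting $\gamma=N(p-1)/(p\lambda)$ and substituting into the scaling identity gives
\[
\|\rho(t)\|_p\le C\Big(\frac{P}{\tau}\Big)^{\gamma}
=C\,g(t)^{\gamma},\qquad
g(t):=\frac{P}{\tau}=\lambda+\tau^{-1}=\frac{\lambda e^{\lambda t}}{e^{\lambda t}-1},
\]
using $\tau=(e^{\lambda t}-1)/\lambda$. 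Everything now reduces to the behaviour of the single scalar function $g$.

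To finish I would analyze $g$, which is continuous and strictly decreasing on $(0,\infty)$, with $g(t)\to\lambda$ as $t\to\infty$ and $g(t)\sim t^{-1}$ as $t\to0^+$. Choosing $t_0>0$ with $g(t_0)\le2\lambda$, for $t>t_0$ I obtain $\|\rho(t)\|_p\le C(2\lambda)^{\gamma}$, which is~\eqref{bound:lp}. For $0<t<t_0$ I would note that $t\,g(t)=\lambda t\,e^{\lambda t}/(e^{\lambda t}-1)$ extends continuously to $t=0$ with value $1$, hence is bounded on $(0,t_0]$ by a constant $C'$; therefore $g(t)\le C't^{-1}$ and $\|\rho(t)\|_p\le C(C')^{\gamma}t^{-\gamma}$, giving~\eqref{bound::lp}.

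There is no genuine obstacle here; the argument is pure bookkeeping of scaling exponents. The only point demanding a little care is matching the $t$-power near $t=0$: one must check that the apparent blow-up of $P/\tau$ is exactly of order $t^{-1}$, so that the exponent $\gamma$ in~\eqref{bound::lp} coincides with the one coming from the $\tau$-variable decay~\eqref{decay:lp}. This is precisely the content of the limit $t\,g(t)\to1$.
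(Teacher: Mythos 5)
Your proof is correct and follows essentially the same route as the paper: both convert the $\tau$-decay~\eqref{decay:lp} (with the diffusion-dominated exponent selected by~\eqref{diffdomabs}) through the scaling identity $\|\rho(t)\|_p=e^{N(1-1/p)t}\|u(\tau)\|_p$ and then bound the resulting factor $\big(P(t)/\tau\big)^{\gamma}=\big(\lambda/(1-e^{-\lambda t})\big)^{\gamma}$ by a constant for $t>t_0$ and by $Ct^{-\gamma}$ for $0<t<t_0$. The paper's inequality $(1-e^{-\lambda t})/\lambda\geq t(1-e^{-\lambda t_0})/(\lambda t_0)$ is exactly your observation that $t\,g(t)$ stays bounded near $t=0$, so the two arguments coincide.
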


\begin{proof}
The estimate \eqref{estim::lp} is exactly \eqref{estim:lp},  in terms of the similarity
variables~\eqref{eq:simivar}. In the parameter regime $r>(2N+2-2s)/N$,
the estimate \eqref{decay:lp} becomes
$
\|u(\tau)\|_p \leq C(\|u_0\|_1,N,p,s)\tau^{-N(p-1)/p(N+2-2s)}
$ (with no dependence on $r$ in the constant $C$), or
\[
  \|\rho(t)\|_p \leq  \|u(\tau)\|_p e^{N(1-\frac{1}{p})t}\leq
  C(\|u_0\|_1,N,p,s)
  \left( \frac{1-e^{-\lambda t}}{\lambda}\right)^{-\frac{N}{N+2-2s}\left(1-\frac{1}{p}\right)}.
\]
Therefore the estimate \eqref{bound:lp} is obtained for $t>t_0$ and \eqref{bound::lp} for $0<t<t_0$,
using the bounds $(1-e^{-\lambda t})/\lambda\geq (1-e^{-\lambda t_0})/\lambda$ and
$(1-e^{-\lambda t})/\lambda \geq t(1-e^{-\lambda_0 t})/(\lambda t_0)$ respectively.
\end{proof}

\begin{remark}
Using \eqref{estim::lp} and \eqref{bound:lp}, if $u_0\in L^1(\mathbb{R}^N)\cap L^p(\mathbb{R}^N)$,
then $\|\rho(t)\|_p $ is uniformly bounded for any time $t>0$ by a constant depending on $N$, $s$, $p$, $\|u_0\|_1$
and $\|u_0\|_p$.  In fact, from~\eqref{bound:lp}, the uniform bound of $\|u_0\|_p$ on the initial
condition $u_0$ is not needed for the long term behaviours of our interest.
For simplicity, we will assume that $u_0 \in L^1(\mathbb{R}^N)\bigcap
L^\infty(\mathbb{R}^N)$ in the rest of the paper, such that
\begin{equation}\label{Linfty_uniform_bound}
\|\rho(t)\|_\infty \leq C(\|u_0\|_1,\|u_0\|_\infty,N,s).
\end{equation}
Consequently, by interpolation between norms,
we also have $\|\rho(t)\|_p \leq C(\|u_0\|_1,\|u_0\|_\infty, N,p,s)$ for any
$1\leq p \leq \infty$.
\end{remark}

Let $M(t) = \int \rho(x,t)\mathrm{d}x$ be the total mass at time $t$.
Since $$\frac{d}{dt}M(t) = -P(t)^{-\delta}\int \rho^r\leq 0,$$
the mass $M(t)$ is decreasing, \textit{i.e.} $\|\rho(t)\|_1:=M(t)\leq M_0:=\|u_0\|_1$.
Moreover, we can show that the limiting mass
\begin{equation}\label{M infty}
  M_\infty=\|u_0\|_{1}-\int_0^{+\infty}P(s)^{-\delta} \int \rho(x,s)^r \mathrm{d}x \mathrm{d}s
\end{equation}
is strictly positive. Indeed, from the uniform bound~\eqref{Linfty_uniform_bound} of the solution,
we get
\begin{equation}
    \frac{d}{dt}M(t) = -P(t)^{-\delta}\int \rho^r
    \geq -P(t)^{-\delta} \|\rho(t)\|_\infty^{r-1} \int \rho
    \geq -C(\|u_0\|_1,\|u_0\|_\infty,N,s)P(t)^{-\delta}M(t).
    \label{massevol}
\end{equation}
This differential inequality can be integrated on the time interval $(0,t)$ to obtain
\[
  \log\left( \frac{M(t)}{M(0)}\right)\geq
   - C\big(\|u_0\|_1,\|u_0\|_\infty,N,s\big)\int_{0}^{t}
  P(t)^{-\delta}\mathrm{d}t
  \geq
   - C\big(\|u_0\|_1,\|u_0\|_\infty,N,s\big)\int_{0}^{\infty}  P(t)^{-\delta}\mathrm{d}t.
\]
This implies that
\[
 M(t) \geq M_\infty \geq M(0)\exp\left(
   - C\big(\|u_0\|_1,\|u_0\|_\infty,N,s\big)\int_{0}^{\infty}  P(t)^{-\delta}\mathrm{d}t
  \right)>0.
\]
With this result, we can show that $\rho(x,t)$ eventually converges to the profile $\rho_{M_\infty}(x)$, by first showing that it converges to the time dependent profile $\rho_{M(t)}(x)$.

\subsection{Exponential convergence for smooth solutions}
In this subsection, we show that if $\rho(t)$ is a solution to Eq.~\eqref{eq:smabs}
with certain regularity, then it is ``close'' to  the Barenblatt profile $\rho_{M(t)}$
measured in some metric.
Here the main technique is to adapt the well-established entropy method in this new setting,
by proving the exponential convergence of the relative entropy between
the solution $\rho(t)$ and the Barenblatt profile $\rho_{M(t)}$.
The calculation below is entirely formal,
without worrying about the regularity of the solution $\rho(t)$, so that the main idea is conveyed
in the simplest setting. Once this convergence is  established, the convergence for general weak
solutions through limiting procedure will be proved
along the same limiting sequences, as shown in the next subsection.

The key idea is still to establish a self-contained differential equality
for the relative entropy $H[\rho(t)|\rho_{M(t)}]$.
From the definition of the entropy $H[\rho]$,
if $\rho(t)$ is a solution of~\eqref{eq:smabs} and decays to zero fast enough, then
the time change of the entropy becomes
\begin{equation} \label{entropy-dissipabsorp}
\frac{d}{dt}H[\rho(t)] =
-I[\rho(t)]-P(t)^{-\delta}\int \rho(t)^{r}\left[
(-\Delta)^{-s}\rho+\frac{x^2}{2}\right].
\end{equation}
On the other hand, since the time-dependent profile $\rho_{M(t)}$ is supported only on the ball
$|x|\leq R(t)$ with $R(t)$ related to the total mass $M(t)$ via~\eqref{mass},
\begin{align*}
\frac{d}{dt} H[\rho_{M(t)}]
&= \frac{d}{dt} \int_{|x|\leq R(t)} \left(
\frac{1}{2}\rho_{M(t)}(-\Delta)^{-s}\rho_{M(t)}+\frac{1}{2}|x|^2\rho_{M(t)}\right) \cr
&= \int_{|x|\leq R(t)}
\left( (-\Delta)^{-s} \rho_{M(t)}+\frac{|x|^2}{2}\right) \frac{d}{dt} \rho_{M(t)}+
R'(t)\int_{|x|=R(t)}
\frac{\rho_{M(t)}}{2}\Big((-\Delta)^{-s}\rho_{M(t)}+|x|^2\Big) \cr
&=\int_{|x|\leq R(t)}
\left( (-\Delta)^{-s} \rho_{M(t)}+\frac{|x|^2}{2}\right) \frac{d}{dt} \rho_{M(t)},
\end{align*}
where the fact that $\rho_{M(t)}$ vanishes on the boundary $|x|=R(t)$ is used in the last step.
From the characterization~\eqref{eq:profrel} of the local profile $\rho_{M(t)}$,
\[
\frac{d}{dt} H[\rho_{M(t)}]
= \frac{NR(t)^2}{2(N-2s)}\int \frac{d}{dt}\rho_{M(t)}
= \frac{NR(t)^2}{2(N-2s)}M'(t)
= -\frac{NR(t)^2}{2(N-2s)}P(t)^{-\delta}\int \rho^r.
\]
Using the characterization~\eqref{eq:profrel} and~\eqref{eq:profre2} again for
the behaviour of $(-\Delta)^{-s}\rho_{M(t)}+|x|^2/2$ inside and outside the ball
$|x|\leq R(t)$, we finally get
\begin{equation}\label{entropyasympprof}
\frac{d}{dt}H[\rho_{M(t)}] \geq -P(t)^{-\delta}
\int \left( (-\Delta)^{-s}\rho_{M(t)} + \frac{|x|^2}{2} \right)
\rho^r.
\end{equation}
With these preliminary formal computations, now we can show
the exponential convergence of the relative entropy
$H[\rho(t)|\rho_{M(t)}]$ as summarized in the following theorem.
The result above holds only for $N=1$ since
the essential entropy-entropy dissipation inequality \eqref{eq:entrdisp} has only been proved in one dimension. As a result, the diffusion dominated regime \eqref{diffdomabs}
becomes $r>4-2s$ or equivalently $\delta = (r-1)/\lambda-1>0$.

\begin{thm}\label{expdecaysmooth} Let $N=1$ and $\rho$ be a smooth solution of the one-dimensional fractional porous medium equation
	with  absorption~\eqref{eq:smabs} with non-negative initial data $u_0$ and $s<1/2$.
	Let   $\rho_{M(t)}$ be the Barenblatt profile with the same mass $M(t)$ as $\rho(t)$. Then
	we have that the relative entropy $H[\rho|\rho_{M(t)}]:=H[\rho]-H[\rho_{M(t)}]$ decays to zero exponentially fast. That
	is, there is a constant $C$ depending on the mass $M_{0}:=\|u_0\|_1$, the sup norm $\|u_0\|_\infty$, the exponents $r$, $s$ and on $H[u_0|\rho_{M_0}]$, such that
	\begin{equation}
	H[\rho(t)|\rho_{M(t)}] \leq C(1+t)^2 \exp(-2\min(1,\lambda\delta)t).\label{exponentialconv}
	\end{equation}
\end{thm}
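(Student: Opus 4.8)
The plan is to derive a closed scalar differential inequality for the relative entropy $h(t):=H[\rho(t)|\rho_{M(t)}]$ and then integrate it. First I would subtract the profile entropy rate~\eqref{entropyasympprof} from the solution entropy rate~\eqref{entropy-dissipabsorp}. Since the quadratic potential $|x|^2/2$ appears with the identical weight $P(t)^{-\delta}\rho^r$ in both expressions, these contributions cancel exactly, leaving
\begin{equation*}
  \frac{d}{dt}h(t) \leq -I[\rho(t)]
  + P(t)^{-\delta}\int \rho^r\Big[(-\Delta)^{-s}\rho_{M(t)} - (-\Delta)^{-s}\rho\Big].
\end{equation*}
Because $\rho(t)$ and $\rho_{M(t)}$ share the same mass $M(t)$ at every fixed time, the entropy--entropy dissipation inequality~\eqref{eq:entrdisp} applies in the form $I[\rho]\geq 2h(t)$, which gives $\tfrac{d}{dt}h \leq -2h + E(t)$, where $E(t)$ denotes the absorption error term on the right.

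The crux of the argument is to show that $E(t)$ decays exponentially and is controlled by $\sqrt{h(t)}$. Writing $(-\Delta)^{-s}=(-\Delta)^{-s/2}(-\Delta)^{-s/2}$ and using self-adjointness of $(-\Delta)^{-s/2}$, I would rewrite the integral as $\int \big[(-\Delta)^{-s/2}\rho^r\big]\,\big[(-\Delta)^{-s/2}(\rho_{M(t)}-\rho)\big]$ and apply Cauchy--Schwarz. The second factor is precisely the quantity in Lemma~\ref{lem:hsnorm}, so $\|(-\Delta)^{-s/2}(\rho-\rho_{M(t)})\|_2 \leq \sqrt{2h(t)}$ by~\eqref{frac_lap_rel_entropy}. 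For the first factor, the Hardy--Littlewood--Sobolev inequality~\eqref{eq:HLS} with $q=2$ and $1/p=1/2+s$ (admissible exactly because $s<1/2$ forces $1<p<2$) yields $\|(-\Delta)^{-s/2}\rho^r\|_2 \leq C\|\rho^r\|_p = C\|\rho\|_{rp}^r$, which is bounded uniformly in time by the $L^q$ estimates of Proposition~\ref{ro:estim} together with~\eqref{Linfty_uniform_bound}. Recalling $P(t)^{-\delta}=e^{-\lambda\delta t}$, this produces $E(t)\leq C e^{-\lambda\delta t}\sqrt{h(t)}$ with a time-independent constant $C$.

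It then remains to integrate the scalar inequality $h'\leq -2h + Ce^{-\lambda\delta t}\sqrt{h}$. The natural move is to set $w=\sqrt{h}$, linearising it into $w'\leq -w + \tfrac{C}{2}e^{-\lambda\delta t}$. Multiplying by the integrating factor $e^t$ and integrating from $0$ to $t$ gives $e^t w(t)\leq w(0) + \tfrac{C}{2}\int_0^t e^{(1-\lambda\delta)\sigma}\,\mathrm{d}\sigma$. Evaluating this integral in the three cases $\lambda\delta<1$, $\lambda\delta=1$, and $\lambda\delta>1$ shows $w(t)\leq C(1+t)e^{-\min(1,\lambda\delta)t}$; the resonant case $\lambda\delta=1$ is where the linear factor $(1+t)$ genuinely appears, while in the other two it only loosens the constant. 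Squaring gives $h(t)\leq C(1+t)^2 e^{-2\min(1,\lambda\delta)t}$, which is exactly~\eqref{exponentialconv}.

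The main obstacle is the uniform-in-time control of $E(t)$: one must guarantee that $\|(-\Delta)^{-s/2}\rho^r\|_2$ does not grow, which is why the hypothesis $u_0\in L^1\cap L^\infty$ and the resulting time-uniform $L^p$ bounds of Proposition~\ref{ro:estim} are indispensable, and why the restriction $s<1/2$ is needed to keep the HLS exponent admissible. The dependence of the final constant on $H[u_0|\rho_{M_0}]$ enters only through $w(0)=\sqrt{h(0)}$, and the decaying mass $M(t)$ requires no further treatment here, since the relative entropy already compares $\rho(t)$ with its \emph{own} time-dependent profile $\rho_{M(t)}$.
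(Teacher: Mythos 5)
Your proposal follows essentially the same route as the paper's proof: the same cancellation of the $|x|^2/2$ terms leaving the error $-P(t)^{-\delta}\int\rho^r(-\Delta)^{-s}(\rho-\rho_{M(t)})$, the same Cauchy--Schwarz splitting controlled by Lemma~\ref{lem:hsnorm} and the Hardy--Littlewood--Sobolev inequality with $p=2/(1+2s)$ together with the uniform $L^p$ bounds, and the same passage to $w=\sqrt{h}$ followed by Gronwall. The only point the paper treats more carefully is the division by $\sqrt{h}$ where $h$ may vanish, which it handles by working on maximal intervals where the relative entropy is strictly positive; this is a routine fix that does not alter your argument.
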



\begin{proof} The proof is based on a self-contained differential inequality
	of the relative entropy $H[\rho(t)|\rho_{M(t)}]$.
	From the two expressions~\eqref{entropy-dissipabsorp} and~\eqref{entropyasympprof}
	for the time derivative of the entropies,
	we have
	\begin{align}
	\frac{d}{dt}H[\rho(t)|\rho_{M(t)}]
	&\leq - I[\rho(t)] -P(t)^{-\delta}\int \rho(t)^r (-\Delta)^{-s}\big(\rho(t)-\rho_{M(t)}\big)\cr
	&  \leq -I[\rho(t)]+P(t)^{-\delta}\big\| (-\Delta)^{-s/2}\rho(t)^r \big\|_{2}
	\big\|(-\Delta)^{-s/2}(\rho(t)-\rho_{M(t)})\big\|_{2}.\label{hdif}
	\end{align}
    Using the one dimensional entropy-entropy dissipation inequality \eqref{eq:entrdisp}
	and the relation \eqref{frac_lap_rel_entropy},
	the differential inequality~\eqref{hdif} becomes
	\[
	\frac{d}{dt} H[\rho(t)|\rho_{M(t)}]
	\leq -2H[\rho(t)|\rho_{M(t)}]
    + \sqrt{2}P(t)^{-\delta}\big\|(-\Delta)^{-s/2}\rho(t)^r\big\|_{2}
    \Big(H[\rho(t)|\rho_{M(t)}]\Big)^{1/2}.
	\]
	Applying the HLS inequality~\eqref{eq:HLS}, i.e.
	\[
	\big\|(-\Delta)^{-s/2}\rho(t)^r \big\|_{2} \leq C(s) \|\rho(t)^r\|_{\frac{2}{1+2s}}
	=C(s)\|\rho(t)\|_{\frac{2r}{1+2s}}^r\leq C(\|u_{0}\|_{1},\|u_{0}\|_{\infty},r,s),
	\]
    we get (recall that $P(t)=e^{\lambda t}$)
    \[
	\frac{d}{dt} H[\rho(t)|\rho_{M(t)}]
	\leq -2H[\rho(t)|\rho_{M(t)}]
    + C(\|u_0\|_1,\|u_0\|_\infty,r,s)e^{-\lambda \delta t}
    \Big(H[\rho(t)|\rho_{M(t)}]\Big)^{1/2}.
    \]
    If the relative entropy $H[\rho(t)|\rho_{M(t)}]$ is strictly positive on any time
    interval $(t_1,t_2)$, then
    \[
	\frac{d}{dt} \Big(H[\rho(t)|\rho_{M(t)}]\Big)^{1/2}
	\leq -\Big(H[\rho(t)|\rho_{M(t)}]\Big)^{1/2} +C(\|u_0\|_1,\|u_0\|_\infty,r,s) \exp(-\lambda \delta
	t), \qquad t \in (t_1,t_2),
	\]
	By Gronwall's inequality, for any $t \in (t_1,t_2)$,
	\[
	H[\rho(t)|\rho_{M(t)}]
	\leq \left[ e^{-(t-t_1)}\left( H[\rho(t_1)|\rho_{M(t_1)}]\right)^{1/2}
	+C(\|u_0\|_1,\|u_0\|_\infty,r,s) e^{-(t-t_1)} \int_{t_1}^t
	\exp( \tau - \lambda \delta \tau)d\tau\right]^2.
	\]
By choosing $t_1$ as small as possible, we have either $t_1=0$ or
$H[\rho(t_1)|\rho_{M(t_1)}]=0$, and  hence
the following exponential convergence for any time $t>0$
\begin{equation}
	H[\rho(t)|\rho_{M(t)}] \leq
C(1+t)^2\exp\big(-2\min(1,\lambda\delta)
t\big),\label{decayrelativeentr}
	\end{equation}
where $C=C(H[u_0|\rho_{M_0}],\|u_0\|_1,\|u_0\|_{\infty},r,s)$.
	
\end{proof}

Once the exponential convergence of the relative entropy is proved,
the convergence in  other metrics can be readily available, for instance
the Wasserstein metric that is defined as
\[
  W_2(\rho_1,\rho_2) = \left( \inf_{\pi \in \prod_M(\rho_1,\rho_2)}
  \iint_{\mathbb{R}^N\times\mathbb{R}^N} |x-y|^2 \mathrm{d}\pi (x,y)\right)^{1/2}
\]
with $\prod_M(\rho_1,\rho_2)$ being the set of all nonnegative Radon measures with total mass $M_0$
on $\mathbb{R}^N\times\mathbb{R}^N$ and marginals $\rho_1$ and $\rho_2$.
Because of the displacement convexity of the entropy $H$, the following
Talagrand inequality or transportation cost inequality
\begin{equation}\label{eq:w2relent}
W_2(\rho,\rho_{M}) \leq \sqrt{ 2(H[\rho]-H[\rho_{M}])}
\end{equation}
holds true (see~\cite[Eq. (2.7)]{MR3279352}), from which the Wasserstein distance between $\rho(t)$ and $\rho_{M(t)}$
also decreases exponentially fast as in the following corollary.
\begin{cor}[Convergence in Wasserstein metric] Under the same condition as in  Theorem \ref{expdecaysmooth},
	\[
	W_2(\rho(t),\rho_{M(t)}) \leq C (1+t)\exp(-\min(1,\lambda\delta)t).
	\]
where $C$ has the same dependence as in Theorem \ref{expdecaysmooth}.
\end{cor}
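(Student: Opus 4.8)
The plan is to combine the transportation cost inequality~\eqref{eq:w2relent} with the exponential decay of the relative entropy already established in Theorem~\ref{expdecaysmooth}; no new analytic machinery is required, and the corollary follows by taking a square root. First I would observe that the Talagrand inequality~\eqref{eq:w2relent} is a statement about two nonnegative measures carrying the same total mass, since the coupling set $\prod_M(\rho_1,\rho_2)$ is nonempty only when $\rho_1$ and $\rho_2$ share equal mass. At each fixed time $t$, the solution $\rho(t)$ and its Barenblatt profile $\rho_{M(t)}$ both have total mass $M(t)$ by construction, so~\eqref{eq:w2relent} applies verbatim with $M$ replaced by the instantaneous mass $M(t)$, yielding
\[
  W_2\big(\rho(t),\rho_{M(t)}\big) \leq \sqrt{2\,H[\rho(t)|\rho_{M(t)}]}.
\]

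Next I would insert the bound~\eqref{exponentialconv} from Theorem~\ref{expdecaysmooth}, namely $H[\rho(t)|\rho_{M(t)}] \leq C(1+t)^2\exp\big(-2\min(1,\lambda\delta)t\big)$, into the right-hand side. Taking the square root converts the factor $(1+t)^2$ into $(1+t)$ and halves the exponent $-2\min(1,\lambda\delta)$ to $-\min(1,\lambda\delta)$, while the multiplicative constant becomes $\sqrt{2C}$; this retains exactly the dependence on $H[u_0|\rho_{M_0}]$, $\|u_0\|_1$, $\|u_0\|_\infty$, $r$ and $s$ asserted in the statement. The resulting estimate is precisely
\[
  W_2\big(\rho(t),\rho_{M(t)}\big) \leq C(1+t)\exp\big(-\min(1,\lambda\delta)t\big).
\]

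The only point deserving comment — and the closest thing to an obstacle — is the justification that the displacement-convexity-based inequality~\eqref{eq:w2relent}, which is usually phrased for a single fixed steady state $\rho_M$, may be invoked at each time slice with the varying mass $M(t)$. This is legitimate because~\eqref{eq:w2relent} is a purely pointwise-in-time comparison between $\rho(t)$ and the unique minimizer $\rho_{M(t)}$ of the entropy $H$ among measures of mass $M(t)$, so the monotone decay of $M(t)$ established earlier plays no role once the square root is taken. Everything else is a one-line substitution, and the regularity hypotheses are inherited unchanged from Theorem~\ref{expdecaysmooth}.
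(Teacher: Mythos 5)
Your proof is correct and follows exactly the route the paper intends: apply the Talagrand inequality~\eqref{eq:w2relent} at each time slice between $\rho(t)$ and the equal-mass minimizer $\rho_{M(t)}$, insert the entropy decay~\eqref{exponentialconv} from Theorem~\ref{expdecaysmooth}, and take the square root. Your remark that the inequality is pointwise in time and indifferent to the monotone decay of $M(t)$ is a sensible clarification, but it adds nothing beyond what the paper's one-line deduction already assumes.
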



\subsection{Exponential convergence for general weak solutions}

In the previous subsection, the solutions are assumed to be smooth
to justify all formal computations.
However, the weak solutions obtained from the limiting sequences may not possess the required regularity and the exponential convergence of the relative entropy has to proved
for the solutions of the regularised equation first, followed by a similar limiting procedure.
To start, we consider the following regularized problem of Eq.~\eqref{eq:smabs}:
\begin{equation}\label{eq:rhoregeq}
\frac{\partial \rho^\epsilon}{\partial t}
=\nabla\cdot\left[ \rho^\epsilon \nabla\left( (-\Delta)^{-s}\rho^\epsilon
+\frac{1}{2}|x|^2 + \epsilon \log \rho^\epsilon\right)\right] -
P(t)^{-\delta}(\rho^\epsilon)^r,\qquad \rho^\epsilon(x,0) = u_0(x).
\end{equation}
It is easy to see that the solutions $\rho_{\epsilon}$ enjoy the same uniform $L^{p}$ bounds
described in Proposition \ref{ro:estim}.
Because of the presence of linear diffusion, an associated regularized entropy is defined as
\[
H_\epsilon[\rho] = \int \left( \frac{1}{2}\rho(-\Delta)^{-s}\rho + \frac{|x|^2}{2} \rho
+\epsilon \rho \log \rho\right)
\]
together with the entropy dissipation rate
\[
I_\epsilon[\rho] = \int \rho\big|\nabla (-\Delta)^{-s}\rho+x +\epsilon \nabla \log
\rho\big|^2.
\]
To make sure that the entropy $H_\epsilon[\rho^\epsilon(t)]$ is
finite for the solution of Eq. \eqref{eq:rhoregeq}, we first assume that the initial data has  finite second
moment, since this property propagates through any time, as we prove in the following lemma.

\begin{lem}\label{boundsecmomapr}
	If $u_{0}$ is a non-negative function in  $L^1\big(\mathbb{R}^N,(1+|x|^{2})\mathrm{d}x\big)\bigcap
	L^{\infty}\big(\mathbb{R}^N\big)$, then the solution $\rho^\epsilon(t)$ of
	Eq.~\eqref{eq:rhoregeq} stays in
	$L^{1}(\mathbb{R}^N,(1+|x|^{2})\mathrm{d}x)$ for all $t>0$ and
	\begin{equation}\label{mom_bound}
	\int |x|^{2}\rho^{\epsilon} \leq C\left(
	\|u_0\|_{1},\|u_0\|_{\infty},\int |x|^{2}u_0\,\mathrm{d}x,N,s\right).
	\end{equation}
\end{lem}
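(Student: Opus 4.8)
The plan is to derive a differential inequality for the second moment $m^\epsilon(t) := \int |x|^2 \rho^\epsilon(t)\,\mathrm{d}x$ and close it by Gronwall's inequality. First I would differentiate under the integral sign and substitute the regularized equation~\eqref{eq:rhoregeq}, obtaining
\[
\frac{d}{dt} m^\epsilon(t) = \int |x|^2 \nabla\cdot\Big[\rho^\epsilon \nabla\big((-\Delta)^{-s}\rho^\epsilon + \tfrac12 |x|^2 + \epsilon\log\rho^\epsilon\big)\Big] - P(t)^{-\delta}\int |x|^2 (\rho^\epsilon)^r.
\]
The absorption term on the right is non-positive (since $\rho^\epsilon \ge 0$), hence can be discarded for an upper bound. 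Integrating the divergence term by parts against $\nabla |x|^2 = 2x$ produces three contributions: a confining term $-2\int |x|^2 \rho^\epsilon$ coming from $\nabla(\tfrac12|x|^2) = x$; the entropic term $-2\epsilon\int x\cdot\nabla\rho^\epsilon = 2\epsilon N\int\rho^\epsilon$ after a further integration by parts; and the nonlocal drift $-2\int x\cdot\rho^\epsilon \nabla(-\Delta)^{-s}\rho^\epsilon$.

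The main point is the nonlocal drift. Writing $\nabla(-\Delta)^{-s}\rho^\epsilon$ through the Riesz kernel~\eqref{eq:riesz} and symmetrizing the resulting double integral in the two variables, the factor $x\cdot(x-z)$ is replaced by $\tfrac12|x-z|^2$, so that
\[
\int x\cdot\rho^\epsilon\nabla(-\Delta)^{-s}\rho^\epsilon = \frac{2s-N}{2}\int \rho^\epsilon(-\Delta)^{-s}\rho^\epsilon.
\]
Since $2s - N < 0$, this drift contributes the non-negative quantity $(N-2s)\int\rho^\epsilon(-\Delta)^{-s}\rho^\epsilon = (N-2s)\|(-\Delta)^{-s/2}\rho^\epsilon\|_2^2$, which I would control by the Hardy--Littlewood--Sobolev inequality~\eqref{eq:HLS} with $q = 2$ and $p = 2N/(N+2s)$ (admissible precisely when $s<N/2$, as in our setting), giving $\|(-\Delta)^{-s/2}\rho^\epsilon\|_2 \le C(N,s)\|\rho^\epsilon\|_{2N/(N+2s)}$. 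The right-hand side is uniformly bounded in $t$ by the $L^p$ estimates of Proposition~\ref{ro:estim} and the ensuing remark (valid for $\rho^\epsilon$ as well), by a constant depending only on $\|u_0\|_1,\|u_0\|_\infty,N,s$.

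Collecting the terms and using $\int\rho^\epsilon\le\|u_0\|_1$ together with $\epsilon\le 1$, I obtain $\frac{d}{dt}m^\epsilon(t) \le -2 m^\epsilon(t) + C$ with $C = C(\|u_0\|_1,\|u_0\|_\infty,N,s)$, whence by Gronwall $m^\epsilon(t) \le e^{-2t}m^\epsilon(0) + \tfrac{C}{2}(1-e^{-2t}) \le \int|x|^2 u_0\,\mathrm{d}x + \tfrac{C}{2}$, which is the desired bound~\eqref{mom_bound}. The delicate point, besides the symmetrization identity, is the rigorous justification of the integrations by parts: a priori the second moment need not be finite, so I would perform the computation against a smooth truncation $|x|^2\phi_R(x)$ with $\phi_R$ a cutoff at scale $R$, control the boundary and commutator errors using the uniform $L^p$ bounds and the regularity furnished by the linear diffusion $\epsilon\Delta\rho^\epsilon$ implicit in~\eqref{eq:rhoregeq}, and then let $R\to\infty$; the decaying prefactor $e^{-2t}$ guarantees that the resulting bound is uniform in time.
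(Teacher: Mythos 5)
Your proposal is correct and follows essentially the same route as the paper: differentiate the second moment along~\eqref{eq:rhoregeq}, discard the non-positive absorption term, symmetrize the nonlocal drift via the Riesz kernel to obtain $\tfrac{N-2s}{2}\int \rho^{\epsilon}(-\Delta)^{-s}\rho^{\epsilon}$, bound this by Hardy--Littlewood--Sobolev together with the uniform $L^{p}$ estimates, and close with Gronwall; the paper likewise notes that the formal integrations by parts are justified by a spatial cut-off. The only (immaterial) differences are bookkeeping with factors of $2$ from working with $\int|x|^{2}\rho^{\epsilon}$ rather than $\int\tfrac{|x|^{2}}{2}\rho^{\epsilon}$.
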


\begin{proof}
	The computations below are based on the assumption that $\rho^\epsilon$ decays
	to zero fast enough in the far field, and can be made rigorous by using a cut-off function in space,
	see for instance \cite[Lemma 2.1]{DP}.
	From the governing equation \eqref{eq:rhoregeq} for $\rho^{\epsilon}$,
	\begin{align}\label{eq:momdev2}
	\frac{d}{dt} \int \frac{|x|^{2}}{2}\rho^{\epsilon}
	&= -\int \rho^{\epsilon} (x\cdot \nabla (-\Delta)^{-s}\rho^{\epsilon} +|x|^2)
	- P(t)^{-\delta}\int \frac{|x|^{2}}{2}(\rho^{\epsilon})^{r}+ \epsilon\int \frac{|x|^{2}}{2} \triangle \rho^{\epsilon}\cr
	&\leq -\int \rho^{\epsilon} x\cdot \nabla (-\Delta)^{-s} \rho^{\epsilon}-\int |x|^{2} \rho^{\epsilon}+ \epsilon N M_{\epsilon}(t) .
	\end{align}
	By the definition $(-\Delta)^{-s}\rho^{\epsilon}(x) = C_{N,s} \int
	|x-y|^{2s-N}\rho^{\epsilon}(y)dy$ of the Riesz potential, the first
	term on the right hand of~\eqref{eq:momdev2} becomes
	\[
	-\int \rho^{\epsilon} x\cdot \nabla (-\Delta)^{-s}\rho^{\epsilon}
	=  (N-2s)C_{N,s} \iint  \rho^{\epsilon}(x)\rho^{\epsilon}(y) x\cdot(x-y)|x-y|^{2s-N-2} \mathrm{d}y\mathrm{d}x.
	\]
	The last double integral can be symmetrized by taking the average with the same expression
	when the variables $x$ and $y$ are exchanged. That is,
	\[
	-\int \rho^{\epsilon} x\cdot \nabla (-\Delta)^{-s}\rho^{\epsilon}
	=\frac{N-2s}{2} C_{N,s}\iint \rho^{\epsilon}(x)\rho^{\epsilon}(y) |x-y|^{2s-N} \mathrm{d}y\mathrm{d}x
	=\frac{N-2s}{2}\int \rho^{\epsilon} (-\Delta)^{-s}\rho^{\epsilon}.
	\]
	As a result, the rate of change of second moment can be bounded as
	\begin{align}\label{55}
	\frac{d}{dt} \int \frac{|x|^{2}}{2}\rho^{\epsilon}
	\leq (N-2s)\int \rho^{\epsilon} (-\Delta)^{-s}\rho^{\epsilon} - \int|x|^{2}\rho^{\epsilon} +N
	M_0.
	\end{align}
	By the Hardy-Littlewood-Sobolev inequality~\eqref{eq:HLS} and the uniform $L^{p}$ bounds
described in Proposition \ref{ro:estim},
	\[
	\int \rho^{\epsilon} (-\Delta)^{-s}\rho^{\epsilon} \leq  \|(-\Delta)^{-\frac{s}{2}}\rho^{\epsilon}\|^2_{2}
	\leq C(N,s) \|\rho^{\epsilon}\|_{\frac{2N}{N+2s}}^2 \leq C(\|u_0\|_{1},\|u_0\|_{\infty},N,s).
	\]
	This bound, combined with the differential inequality~\eqref{55}, leads to the desired estimate \eqref{mom_bound}.
\end{proof}

Lemma \ref{boundsecmomapr} assures that all three integrals in the entropy $H_\epsilon$ are finite along the flow of Eq.~\eqref{eq:rhoregeq}. Indeed, if  the initial data $u_{0}$ is a nonnegative function in $L^1(\mathbb{R}^N,(1+|x|^{2})\mathrm{d}x)\bigcap L^{\infty}(\mathbb{R}^N)$, then
both $\int |x|^2 \rho^\epsilon(t)$ and $\int\rho^{\epsilon}(-\Delta)^{-s}\rho^{\epsilon}$ are finite.
By Carleman type estimate (see \cite[Lemma 2.2]{Blanchet-2008}),  the  uniform in time bound of the second order moments yields $\rho^{\epsilon}(t)\log\rho^{\epsilon}(t)\in L^{1}(\mathbb{R})$ at any time $t>0$. Moreover, Lemma \ref{boundsecmomapr}  implies the  mass confinement
\[
\lim_{R\to \infty} \sup_{\epsilon>0}\int_{|x|>R}\rho_{\epsilon}(t)\mathrm{d}x\leq
\lim_{R\to \infty} \sup_{\epsilon>0}\frac{1}{R^2}
\int_{|x|>R}|x|^2\rho_{\epsilon}(t)\mathrm{d}x =  0.
\]
This gives in particular the weak $L^1$ convergence of $\rho^\epsilon(t)$ to $\rho(t)$ thus in particular the convergence of $M_{\epsilon}(t)$ to $M(t)$.
Using the lower semi-continuity of the entropy $H$ with respect to the weak*-convergence, we have
\[
H[\rho(t)] \leq \liminf_{\epsilon \to 0^+} H[\rho^\epsilon(t)]
\]
and
\[
\int |x|^{2}\rho(t)\mathrm{d}x\leq \liminf_{\epsilon\to 0^+}
\int |x|^{2}\rho^\epsilon(t)\mathrm{d}x \leq
C\left(
N,s,\|u_0\|_{\infty},\|u_0\|_{1},\int |x|^{2}u_0\,\mathrm{d}x\right).
\]
Now, arguing as in the proof of Lemma \ref{boundsecmomapr} we can show that higher order moments are also preserved, that will be needed in Section~\ref{Lpdecay}
to show the exponential convergence in $L^1$-norm.
\begin{lem}[Bounds on higher order moments] \label{lem:2nm}
	Let $n$ be a positive integer and $u_0$ be a non-negative function in $ L^{1}(\mathbb{R}^N,(1+|x|^{2n})\mathrm{d}x)\cap L^{\infty}(\mathbb{R}^N)$. Then the solution $\rho^\epsilon(t)$ to Eq.~\eqref{eq:rhoregeq} with initial condition
	$u_0$ is also in $L^{1}(\mathbb{R}^N,(1+|x|^{2n})\mathrm{d}x)$ for all $t>0$ and
	\begin{equation}\label{bound_2n_moment}
	\int |x|^{2n}\rho_{\epsilon} \leq C\left(\|u_0\|_{1},\|u_0\|_{\infty}, \int |x|^{2n}u_0,n,N,s\right).
	\end{equation}	
\end{lem}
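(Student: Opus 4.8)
The plan is to mimic the second-moment computation of Lemma~\ref{boundsecmomapr}, now testing Eq.~\eqref{eq:rhoregeq} against the weight $\phi(x)=|x|^{2n}$, and to close the resulting differential inequality by induction on $n$. As in the proof of Lemma~\ref{boundsecmomapr}, all manipulations below are formal (assuming sufficient decay of $\rho^\epsilon$ at infinity) and are made rigorous by inserting a spatial cut-off and passing to the limit, exactly as in~\cite[Lemma 2.1]{DP}. Writing $M_{2k}(t)=\int|x|^{2k}\rho^\epsilon(t)$, the base case $M_0\le\|u_0\|_1$ is the mass bound and the case $n=1$ is precisely Lemma~\ref{boundsecmomapr}; I will assume $M_{2n-2}(t)$ is already bounded uniformly in $t$ and $\epsilon$ and deduce the same for $M_{2n}(t)$.

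First I would integrate Eq.~\eqref{eq:rhoregeq} against $\phi=|x|^{2n}$ and integrate by parts. Using $\nabla\phi=2n|x|^{2n-2}x$ and $\Delta\phi=2n(2n+N-2)|x|^{2n-2}$, the four contributions are: the confinement term $-\int\nabla\phi\cdot\rho^\epsilon x=-2n\,M_{2n}$, which is the good negative feedback; the linear-diffusion term coming from $\epsilon\log\rho^\epsilon$, equal to $\epsilon\int\Delta\phi\,\rho^\epsilon=2n(2n+N-2)\epsilon\,M_{2n-2}$; the absorption term $-P(t)^{-\delta}\int|x|^{2n}(\rho^\epsilon)^r\le0$, which is dropped; and the nonlocal-pressure term $-2n\int|x|^{2n-2}x\cdot\rho^\epsilon\nabla(-\Delta)^{-s}\rho^\epsilon$, which is the only delicate one.

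The core of the argument is to bound this nonlocal term. Substituting the Riesz representation $\nabla(-\Delta)^{-s}\rho^\epsilon(x)=C_{N,s}(2s-N)\int|x-y|^{2s-N-2}(x-y)\rho^\epsilon(y)\,\mathrm{d}y$ and symmetrizing in $x\leftrightarrow y$ (as in Lemma~\ref{boundsecmomapr}), this term equals
\[
n\,C_{N,s}(N-2s)\iint\big(f(x)-f(y)\big)\cdot(x-y)\,|x-y|^{2s-N-2}\rho^\epsilon(x)\rho^\epsilon(y)\,\mathrm{d}x\,\mathrm{d}y,
\]
where $f(x)=|x|^{2n-2}x=\nabla\big(|x|^{2n}/(2n)\big)$. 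Since $f$ is the gradient of a convex function, this quantity is nonnegative, so it must be bounded from above. I would use the elementary estimate $\big(f(x)-f(y)\big)\cdot(x-y)\le(2n-1)\max(|x|,|y|)^{2n-2}|x-y|^2$, obtained by integrating the Jacobian $Df(z)=|z|^{2n-2}I+(2n-2)|z|^{2n-4}z\otimes z$ along the segment from $y$ to $x$ and using $|z|\le\max(|x|,|y|)$ there; this cancels two powers of $|x-y|$ and converts the singular kernel into the integrable $|x-y|^{2s-N}$. Bounding $\max(|x|,|y|)^{2n-2}\le|x|^{2n-2}+|y|^{2n-2}$ and symmetrizing once more reduces the nonlocal term to $C\int|x|^{2n-2}\rho^\epsilon(-\Delta)^{-s}\rho^\epsilon$. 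Finally, since $\rho^\epsilon$ has uniformly bounded $L^1$ and $L^\infty$ norms by Proposition~\ref{ro:estim}, the Riesz potential is uniformly bounded, $\|(-\Delta)^{-s}\rho^\epsilon\|_\infty\le C(\|u_0\|_1,\|u_0\|_\infty,N,s)$ (the kernel $|x-y|^{2s-N}$ being locally integrable since $2s<N$, and bounded by $\|\rho^\epsilon\|_1$ at infinity), so the nonlocal term is at most $C\,M_{2n-2}$.

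Collecting the four contributions yields the self-contained differential inequality
\[
\frac{d}{dt}M_{2n}(t)\le-2n\,M_{2n}(t)+C\,M_{2n-2}(t),
\]
with $C$ depending only on $n,N,s,\|u_0\|_1,\|u_0\|_\infty$ (absorbing the factor $\epsilon\le1$ in the linear-diffusion term). By the inductive hypothesis $M_{2n-2}(t)$ is uniformly bounded, so the right-hand side is dominated by $-2n\,M_{2n}+C'$; Gronwall's inequality then gives $M_{2n}(t)\le\max\big(M_{2n}(0),C'/(2n)\big)$, uniformly in $t$ and $\epsilon$, which is the claim~\eqref{bound_2n_moment}. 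I expect the main obstacle to be the nonlocal term: the symmetrization combined with the monotone-field bound, which trades one factor of $|x-y|^2$ against the singularity, is precisely what makes the weighted energy controllable by the lower moment $M_{2n-2}$, after which the induction closes routinely.
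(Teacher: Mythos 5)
Your proposal is correct and follows essentially the same route as the paper's proof: test the regularised equation against $|x|^{2n}$, symmetrize the nonlocal-pressure term via the Riesz representation and the monotonicity inequality $(|x|^{2n-2}x-|y|^{2n-2}y)\cdot(x-y)\le K(|x|^{2n-2}+|y|^{2n-2})|x-y|^2$, and close by induction on $n$. The only (immaterial) difference is the final estimate of $\int|x|^{2n-2}\rho^\epsilon(-\Delta)^{-s}\rho^\epsilon$: the paper absorbs it by Young's inequality into $\tfrac12\int|x|^{2n}\rho^\epsilon$ plus a constant controlled by the Hardy--Littlewood--Sobolev inequality, while you bound $\|(-\Delta)^{-s}\rho^\epsilon\|_\infty$ directly from the uniform $L^1$ and $L^\infty$ bounds and invoke the inductive bound on the $(2n-2)$-th moment; both are valid in the regime $2s<N$ considered here.
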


\begin{proof}  From equation \eqref{eq:smabs} satisfied by $\rho^\epsilon$,
	\begin{align*}\label{eq:momdev}
	\frac{d}{dt} \int \frac{|x|^{2n}}{2n}\rho^\epsilon
	\leq -\int |x|^{2n-2}\rho^\epsilon x\cdot \nabla (-\Delta)^{-s} \rho^\epsilon-\int |x|^{2n} \rho^\epsilon+\epsilon N(2n-1)\int |x|^{2n-2}\rho^\epsilon.
	\end{align*}
	Using the fact that for any positive integer $n$ there exist a constant $K_{N,n}$ such that
	$(|x|^{2n-2}x-|y|^{2n-2}y)\cdot(x-y)
	\leq K_{N,n}(|x|^{2n-2}+|y|^{2n-2})|x-y|^2$, 	
	the non-local integral $-\int |x|^{2n-2}\rho^\epsilon x\cdot \nabla (-\Delta)^{-s}\rho^\epsilon$ on the right hand side can be symmetrized as
	\begin{align*}
	&\quad\frac{N-2s}{2} C_{N,s}\iint \rho^\epsilon(x)\rho^\epsilon(y) (|x|^{2n-2}x-|y|^{2n-2}y)\cdot(x-y)|x-y|^{2s-N-2} \mathrm{d}y\mathrm{d}x \cr
	&\leq \frac{(N-2s)K_{N,n}}{2}\int |x|^{2n-2}\rho^\epsilon (-\Delta)^{-s}\rho^\epsilon.
	\end{align*}
	This integral can be further bounded as
	\begin{align*}
	\frac{(N-2s)K_{N,n}}{2}\int |x|^{2n-2}\rho^\epsilon (-\Delta)^{-s}\rho^\epsilon
	&\leq \frac{1}{2}\int |x|^{2n}\rho^\epsilon + \tilde{K}(N,s,n)\int \rho^\epsilon |(-\Delta)^{-s}\rho^\epsilon|^n \cr
	&\leq \frac{1}{2}\int |x|^{2n}\rho^\epsilon + C(\|u_0\|_1,\|u_0\|_\infty,n,N,s),
	\end{align*}
	where the $L^\infty$ bound of $\rho^\epsilon$ and the Hardy-Littlewood-Sobolev
	inequality is used in the last step.
	As a result, 	the rate of change of $2n$-th moment becomes
	\begin{equation}\label{higherordermom}
	\frac{d}{dt} \int \frac{|x|^{2n}}{2n}\rho^\epsilon
	\leq  C(\|u_0\|_1,\|u_0\|_\infty,n,N,s)- \frac{1}{2}\int|x|^{2n}\rho_{\epsilon}+\epsilon N(2n-1)\int |x|^{2n-2}\rho^\epsilon.
	\end{equation}
	By induction on $n$,  the differential inequality~\eqref{higherordermom} implies that all moments of the solution $\rho^\epsilon$ up to order $2n$ are bounded as well.
\end{proof}


It is straightforward to show that
$M_\epsilon(t)\leq M_{\epsilon}(0)=\|u_{0}\|_{1}=M_{0}$. We define the Barenblatt profile
$\rho_{M_\epsilon(t)}^\epsilon$ as the minimizer of $H_\epsilon[\rho]$
over the set of admissible functions\footnote{Here the superscript $\epsilon$ is used to distinguish it from $\rho_{M}$, the minimizer of $H[\rho]$ with total mass $M$.}
\[
\mathcal{Y}_{M_\epsilon(t)}=\left\{\eta\in
L_{+}^{1}(\mathbb{R}^N):\|\eta\|_{1}=M_\epsilon(t),\,\int|x|^{2}\eta(x)\mathrm{d}x<\infty\right\}.
\]
This minimizer $\rho_{M_\epsilon(t)}^\epsilon$ exists and is unique from the convexity of the functional $H_\epsilon$ (see \cite{MR3262506}).
Moreover, $\rho_{M_\epsilon(t)}^\epsilon$ satisfies the Euler-Lagrange equation
associated with $H_\epsilon$, that is
\begin{equation}
\label{ELeqapp}
(-\Delta)^{-s}\rho^{\epsilon}_{M_\epsilon(t)} + \frac{1}{2}|x|^2 + \epsilon (\log
\rho^{\epsilon}_{M_\epsilon(t)}+1)= \xi_{\epsilon}(t),
\end{equation}
for some constant $\xi_\epsilon(t)$.
First,  the counterpart of Lemma~\ref{lem:hsnorm}  relating  $H^{-s}(\mathbb{R})$
norm the relative entropy still hold for the regularised entropy.
\begin{lem}\label{AppLemma3.3} Let $\eta$ be a non-negative function with total mass $M_\epsilon(t)$
	such that $H_\epsilon[\eta]$ is finite,
	then $\|(-\Delta)^{-s/2}(\eta-\rho^\epsilon_{M_\epsilon(t)})\|_{2}^2
	\leq 2\big( H_\epsilon[\eta]-H_\epsilon[\rho_{M_\epsilon(t)}^\epsilon]\big)$.
\end{lem}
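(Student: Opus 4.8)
The plan is to mimic the proof of Lemma~\ref{lem:hsnorm} for the unregularised entropy, now accounting for the extra logarithmic term. Write $\sigma = \eta - \rho^\epsilon_{M_\epsilon(t)}$, abbreviating $\rho^\epsilon_M := \rho^\epsilon_{M_\epsilon(t)}$; since both functions share the total mass $M_\epsilon(t)$, we have $\int \sigma = 0$. The strategy is to bound the relative entropy $H_\epsilon[\eta] - H_\epsilon[\rho^\epsilon_M]$ from below by its quadratic part $\tfrac{1}{2}\|(-\Delta)^{-s/2}\sigma\|_2^2$, showing that all the first-order (linear in $\sigma$) contributions collapse to zero by virtue of the Euler--Lagrange equation~\eqref{ELeqapp}.

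First I would expand the quadratic Riesz term. Using the bilinearity and symmetry of the Riesz potential,
\[
\frac{1}{2}\int \eta(-\Delta)^{-s}\eta - \frac{1}{2}\int \rho^\epsilon_{M}(-\Delta)^{-s}\rho^\epsilon_M = \int \sigma\,(-\Delta)^{-s}\rho^\epsilon_M + \frac{1}{2}\big\|(-\Delta)^{-s/2}\sigma\big\|_2^2,
\]
while the potential term contributes the purely linear piece $\int \tfrac{|x|^2}{2}\sigma$. For the logarithmic term I would invoke the convexity of $\phi(t) = t\log t$: since $\phi'(t) = \log t + 1$, the tangent-line inequality $\phi(\eta) \geq \phi(\rho^\epsilon_M) + (\log\rho^\epsilon_M + 1)\sigma$ integrates to
\[
\epsilon\int\big(\eta\log\eta - \rho^\epsilon_M\log\rho^\epsilon_M\big) \geq \epsilon\int(\log\rho^\epsilon_M + 1)\,\sigma.
\]
This convexity step is precisely where the regularisation enters, and it is the reason only a one-sided bound (rather than an equality, as in the $\epsilon=0$ case) is produced; fortunately a lower bound is all that the statement requires.

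Collecting the three contributions gives
\[
H_\epsilon[\eta] - H_\epsilon[\rho^\epsilon_M] \geq \frac{1}{2}\big\|(-\Delta)^{-s/2}\sigma\big\|_2^2 + \int \sigma\Big[(-\Delta)^{-s}\rho^\epsilon_M + \frac{|x|^2}{2} + \epsilon(\log\rho^\epsilon_M + 1)\Big].
\]
By the Euler--Lagrange equation~\eqref{ELeqapp} the bracket equals the constant $\xi_\epsilon(t)$, so the last integral reduces to $\xi_\epsilon(t)\int \sigma = 0$, yielding the claim. The one point requiring care is the integrability needed to legitimise these manipulations; I would handle it by keeping the bracketed combination intact and substituting \eqref{ELeqapp} \emph{before} splitting, so that the first-order term is seen to be $\xi_\epsilon(t)\int\sigma$ with $\sigma \in L^1$ and is therefore trivially finite. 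The finiteness of the individual pieces, should one prefer to split them, is guaranteed by the hypothesis that $H_\epsilon[\eta]$ is finite, together with the second-moment bound of Lemma~\ref{boundsecmomapr} and the $L^p$ bounds of Proposition~\ref{ro:estim}, which control $(-\Delta)^{-s}\rho^\epsilon_M$ through the Hardy--Littlewood--Sobolev inequality~\eqref{eq:HLS}.
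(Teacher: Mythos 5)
Your proof is correct and follows essentially the same route as the paper: expand the relative entropy around $\rho^\epsilon_{M_\epsilon(t)}$, use the Euler--Lagrange equation~\eqref{ELeqapp} together with $\int(\eta-\rho^\epsilon_{M_\epsilon(t)})=0$ to annihilate the linear terms, and control the remaining logarithmic contribution by convexity of $t\log t$. The only (cosmetic) difference is that you invoke the pointwise tangent-line (Gibbs) inequality where the paper keeps the exact identity $H_\epsilon[\eta]-H_\epsilon[\rho^\epsilon_{M_\epsilon(t)}]=\tfrac12\|(-\Delta)^{-s/2}(\eta-\rho^\epsilon_{M_\epsilon(t)})\|_2^2+\epsilon\int\eta\log(\eta/\rho^\epsilon_{M_\epsilon(t)})$ and then applies Jensen's inequality to the last term; both are valid since $\rho^\epsilon_{M_\epsilon(t)}>0$ by~\eqref{ELeqexp}.
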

\begin{proof}
	Since $\rho_{M_\epsilon(t)}^\epsilon$ has the same mass $M_\epsilon(t)$ as $\eta$,
	from the characterization~\eqref{ELeqapp}, we get
	\[
	0 = \int \xi_\epsilon(t)\Big( \eta -\rho^\epsilon_{M_\epsilon(t)}\Big)
	= \int \left(\eta-\rho_{M_\epsilon(t)}^\epsilon\right)
	\left( (-\Delta)^{-s}\rho^{\epsilon}_{M_\epsilon(t)} + \frac{1}{2}|x|^2 + \epsilon (\log
	\rho^{\epsilon}_{M_\epsilon(t)}+1)\right).
	\]
	This identity can be rearranged to obtain the relation
	\[
	H_\epsilon[\eta]-H_\epsilon[\rho^{\epsilon}_{M_\epsilon(t)}]
	=\frac{1}{2}\|(-\Delta)^{-s/2}(\eta-\rho^{\epsilon}_{M_\epsilon(t)})\|^{2}_{2}
	+\epsilon\int\eta\log\frac{\eta}{\rho^{\epsilon}_{M_\epsilon(t)}}\mathrm{d}x.
	\]
	Applying Jensen's inequality (with then measure $d\mu =
	\frac{\rho_{M_\epsilon(t)}^\epsilon}{M_\epsilon(t)}\mathrm{d}x$),
	\[
	\int\eta\log\frac{\eta}{\rho^{\epsilon}_{M_\epsilon(t)}}\mathrm{d}x
	=M_\epsilon(t)\int\frac{\eta}{\rho^{\epsilon}_{M_\epsilon(t)}}
	\log\frac{\eta}{\rho^{\epsilon}_{M_\epsilon(t)}}d\mu(t)
	\geq M_\epsilon(t)
	\left(\int \frac{\eta}{\rho_{M_\epsilon(t)}^\epsilon}d\mu \right)
	\log\left(\int \frac{\eta}{\rho_{M_\epsilon(t)}^\epsilon}d\mu \right)
	=0.
	\]
	Therefore, the desired inequality holds.
\end{proof}
Although $\rho_{M_\epsilon(t)}^\epsilon$
is not expected to have explicit expressions as $\rho_{M}$ in \eqref{eq:profile},
we can still get uniform bounds with respect to $\epsilon$ and $t>0$.
\begin{prop}\label{Propregminim} The family of minimizers $\rho_{M_\epsilon(t)}^\epsilon$ is smooth and uniformly bounded in $L^{\infty}(\mathbb{R}^{N})$ with respect to $\epsilon$.
\end{prop}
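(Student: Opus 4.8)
The plan is to establish the uniform $L^\infty$ bound on the regularized minimizers $\rho^\epsilon_{M_\epsilon(t)}$ by exploiting the Euler--Lagrange equation~\eqref{ELeqapp} together with the uniform mass and second-moment bounds already available. First I would rewrite~\eqref{ELeqapp} to isolate the logarithmic term, obtaining
\[
  \epsilon\log\rho^\epsilon_{M_\epsilon(t)}
  = \xi_\epsilon(t) - \epsilon
  - (-\Delta)^{-s}\rho^\epsilon_{M_\epsilon(t)} - \tfrac{1}{2}|x|^2,
\]
so that
\[
  \rho^\epsilon_{M_\epsilon(t)}(x)
  = \exp\!\left( \frac{\xi_\epsilon(t)-\epsilon
  - (-\Delta)^{-s}\rho^\epsilon_{M_\epsilon(t)}(x) - \tfrac{1}{2}|x|^2}{\epsilon}\right).
\]
Since the Riesz potential $(-\Delta)^{-s}\rho^\epsilon_{M_\epsilon(t)}$ is non-negative and $|x|^2\geq0$, the exponent is bounded above by $(\xi_\epsilon(t)-\epsilon)/\epsilon$, giving pointwise control of $\rho^\epsilon_{M_\epsilon(t)}$ once the Lagrange multiplier $\xi_\epsilon(t)$ is controlled uniformly. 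Smoothness then follows by a standard bootstrap: the explicit exponential form shows $\rho^\epsilon_{M_\epsilon(t)}$ is as regular as its Riesz potential, and elliptic regularity for $(-\Delta)^{-s}$ upgrades this iteratively.

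The crux is therefore a uniform (in $\epsilon$ and $t$) upper bound on $\xi_\epsilon(t)$. I would extract this from the mass constraint $\|\rho^\epsilon_{M_\epsilon(t)}\|_1 = M_\epsilon(t) \leq M_0$ combined with the finite second moment from Lemma~\ref{boundsecmomapr}. Integrating the pointwise exponential bound over a large ball $\{|x|\leq R\}$ where the quadratic term $\tfrac12|x|^2$ cannot be too large, a lower bound on the total mass forces $\xi_\epsilon(t)$ to stay below a threshold independent of $\epsilon$; conversely, if $\xi_\epsilon(t)$ were too large, the exponential profile would place excessive mass, contradicting $M_\epsilon(t)\leq M_0$. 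The role of the quadratic confinement $\tfrac12|x|^2$ is precisely to guarantee integrability and to tie $\xi_\epsilon(t)$ to the second moment uniformly.

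The main obstacle will be handling the coupling between $\xi_\epsilon(t)$ and the Riesz potential term, which itself depends on the unknown minimizer: the bound is implicit rather than explicit. To close this loop I would argue by a \emph{a priori} estimate / continuity argument. Assuming a tentative uniform $L^\infty$ bound $\|\rho^\epsilon_{M_\epsilon(t)}\|_\infty \leq K$, the Hardy--Littlewood--Sobolev inequality~\eqref{eq:HLS} controls $(-\Delta)^{-s}\rho^\epsilon_{M_\epsilon(t)}$ in an appropriate norm in terms of the uniform $L^p$ bounds, and interpolation with the mass bound yields a uniform $L^\infty$ bound on the potential, which feeds back through the exponential form to produce an improved bound on $\rho^\epsilon_{M_\epsilon(t)}$ that is independent of $K$ for $\epsilon$ small. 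Making this bootstrap genuinely self-improving—so that the constant does not blow up as $\epsilon\to0^+$—is the delicate point, since the factor $1/\epsilon$ in the exponent amplifies any error. One controls this by observing that as $\epsilon\to0$ the minimizer converges to the explicit compactly supported profile~\eqref{eq:profile}, whose $L^\infty$ norm depends only on $M_0$ through~\eqref{mass}; the uniform bound for small $\epsilon$ is then a perturbation of this limiting profile, while for $\epsilon$ bounded away from zero the exponential representation gives the bound directly. Combining the two regimes yields the claimed uniform bound.
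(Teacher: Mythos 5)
Your opening moves coincide with the paper's: the exponential representation of $\rho^\epsilon_{M_\epsilon(t)}$ from the Euler--Lagrange equation \eqref{ELeqapp}, the uniform control of $\xi_\epsilon(t)-\epsilon$ via the mass constraint and the minimality of $\rho^\epsilon_{M_\epsilon(t)}$, and the bootstrap of smoothness through regularity estimates for the Riesz potential are all essentially what the paper does. However, there is a genuine gap at the heart of the argument: the uniform-in-$\epsilon$ $L^\infty$ bound. The pointwise estimate $\rho^\epsilon_{M_\epsilon(t)}\leq \exp(\xi_\epsilon(t)/\epsilon)$ is useful only for fixed $\epsilon$, since $\xi_\epsilon(t)$ converges to the strictly positive constant $\frac{N R^2}{2(N-2s)}$ of \eqref{eq:profrel}, so the bound blows up as $\epsilon\to0^+$. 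You correctly identify this as ``the delicate point,'' but the two mechanisms you offer to close it do not work as stated. The self-improving bootstrap (assume $\|\rho^\epsilon\|_\infty\leq K$, control $(-\Delta)^{-s}\rho^\epsilon$ by HLS, feed back through the exponential) cannot be self-improving precisely because of the $1/\epsilon$ in the exponent: an $O(1)$ uncertainty in the potential produces an $e^{O(1/\epsilon)}$ uncertainty in the density, so the map from $K$ to the ``improved'' bound is not a contraction uniformly in $\epsilon$. And the appeal to ``the minimizer converges to the explicit profile \eqref{eq:profile}, so the bound for small $\epsilon$ is a perturbation of the limiting profile'' is circular: the convergence of minimizers of $H_\epsilon$ to the minimizer of $H$ that one gets for free is only in weak topologies (weak-$L^1$ or narrow convergence of measures), which carries no $L^\infty$ information; upgrading that convergence to an $L^\infty$ bound is exactly the statement being proved.

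The paper's actual mechanism is quantitative and quite different at this step: it tests the Euler--Lagrange equation against $\nabla(\rho^\epsilon_{M_\epsilon(t)})^p$, applies the Stroock--Varopoulos inequality \eqref{eq:svineq} to obtain
\[
\frac{4p}{(p+1)^2}\int\Bigl|(-\Delta)^{\frac{1-s}{2}}(\rho^{\epsilon}_{M_\epsilon(t)})^{\frac{p+1}{2}}\Bigr|^{2}
\leq N\int \bigl(\rho^{\epsilon}_{M_\epsilon(t)}\bigr)^p,
\]
then invokes the fractional Gagliardo--Nirenberg inequality \eqref{GN} to convert this into the reverse-H\"older-type inequality $\|\rho^{\epsilon}_{M_\epsilon(t)}\|_{p}^{2p}\geq \mathsf{C}(N,s,p)\,\|\rho^{\epsilon}_{M_\epsilon(t)}\|_{\mathsf{q}}^{2p+1}$ with $\mathsf{q}>p$, and finally runs a Moser iteration over an increasing sequence of exponents to land on $\|\rho^{\epsilon}_{M_\epsilon(t)}\|_{\infty}\leq C(N,s)M_\epsilon(t)^{\frac{2(1-s)}{1+2(1-s)}}$, which depends only on the mass and hence is uniform in both $\epsilon$ and $t$. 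Note that this iteration never passes through the exponential representation, which is why it escapes the $1/\epsilon$ amplification entirely. Without some such quantitative iteration (or an alternative uniform estimate), your proposal does not establish the uniform bound, which is the substantive content of the proposition.
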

\begin{proof}
	We first show that the minimizers are smooth for any $\epsilon$ small enough, and then show the bounds are uniform.
	Multiplying both sides
	of the Euler-Lagrange equation~\eqref{ELeqapp} with $\rho_{M_\epsilon(t)}^\epsilon$ and integrating
	on the whole space, we get
	\begin{align*}
	(\xi_\epsilon(t)-\epsilon)M_\epsilon(t)
	=\frac{1}{2}\int \rho_{M_\epsilon(t)}^\epsilon(-\Delta)^{-s}
	\rho_{M_\epsilon(t)}^\epsilon + H_\epsilon[\rho_{M_\epsilon(t)}^\epsilon]
	\leq H_\epsilon[\rho_{M_\epsilon(t)}^\epsilon]
	\leq H_\epsilon[\rho_{M_\epsilon(t)}],
	\end{align*}
	where the fact that $\rho_{M_\epsilon(t)}^\epsilon$ is
	the minimizer of $H_\epsilon$ is used in the last inequality.
	Since $M_\epsilon(t)$ converges to $M(t)>0$, $H_\epsilon[\rho_{M_\epsilon(t)}]/M_\epsilon(t)$
	is uniformly bounded (for $\epsilon$ small), and so is $\xi_\epsilon-\epsilon$.
	Next again from the Euler-Langrange equation \eqref{ELeqapp},
	\begin{equation}
	 \rho^{\epsilon}_{M_\epsilon(t)}=\exp\left(\frac{1}{\epsilon}\left(\xi_{\epsilon}(t)-(-\Delta)^{-s}\rho^{\epsilon}_{M_\epsilon(t)}-\frac{|x|^{2}}{2}\right)-1\right)\leq
	\exp\left(\frac{\xi_{\epsilon}(t)}{\epsilon}\right)\label{ELeqexp},
	\end{equation}
	which implies that $\rho^{\epsilon}_{M_\epsilon(t)}\in L^{\infty}(\mathbb{R})$
	for any finite $\epsilon>0$.

	To prove that $\rho^{\epsilon}_{M_\epsilon(t)}$ is smooth,  we follow the main ideas from \cite[Theorem 10]{CHMV} for a similar calculation for the case $s=1$.
	Indeed, using the interior regularity result for the fractional Laplacian stated in\cite[Theorem 1.1 and Corollary 3.5]{Ros-Oton} (see also \cite[Proposition
	5.2]{CafSting}) we have $(-\Delta)^{-s}\rho^{\epsilon}_{M_\epsilon(t)}\in L^{\infty}(\mathbb{R})$
	and
	\begin{equation*}
	\|(-\Delta)^{-s}\rho^{\epsilon}_{M_\epsilon(t)}\|_{C^{0,2s}}\leq
	C\left(\|(-\Delta)^{-s}\rho^{\epsilon}_{M_\epsilon(t)}\|_{\infty}+\|\rho^{\epsilon}_{M_\epsilon(t)}\|_{\infty}\right)\, .
	\end{equation*}
	then from \eqref{ELeqexp} we obtain $\rho^{\epsilon}_{M_\epsilon(t)}\in C^{0,2s}$. Then we apply again the above mentioned regularity estimate of
	\cite{Ros-Oton}, saying that for $\alpha > 0$ such that $\alpha+2s$ not an integer,
	\begin{equation*}\label{HR}
	\|(-\Delta)^{-s}\rho^{\epsilon}_{M_\epsilon(t)}\|_{C^{0,\alpha+2s}}\le
	C\left(\|(-\Delta)^{-s}\rho^{\epsilon}_{M_\epsilon(t)}\|_{\infty}+\|\rho^{\epsilon}_{M_\epsilon(t)}\|_{C^{0,\alpha}}\right)\, .
	\end{equation*}
	to find that $(-\Delta)^{-s}\rho^{\epsilon}_{M_\epsilon(t)}\in C^{\gamma}$ for any $\gamma<4s$. Then by \eqref{ELeqexp} we have
	$\rho^{\epsilon}_{M_\epsilon(t)}\in C^{\gamma}$ for any $\gamma<4s$. Arguing iteratively, any order $\ell$ of differentiability for
	$(-\Delta)^{-s}\rho^{\epsilon}_{M_\epsilon(t)}$ (and then for $\rho^{\epsilon}_{M_\epsilon(t)}$) can be reached and hence $\rho^{\epsilon}_{M_\epsilon(t)}\in
	C^{\infty}$.
	we finally obtain $\rho^{\epsilon}_{M_\epsilon(t)}$ is actually $C^{\infty}$.\\[0.2pt]
	With the regularity properties we have just proved at hand, it is quite standard to obtain a uniform bound of $\rho^{\epsilon}_{M_\epsilon(t)}$ in time and
	$\epsilon>0$. Indeed, from equation \eqref{ELeqapp} we have
	\begin{align*}
	0 &= \int \nabla (\rho^{\epsilon}_{M_\epsilon(t)})^p \cdot
	\nabla\left[
	(-\Delta)^{-s}\rho^{\epsilon}_{M_\epsilon(t)} + \frac{1}{2}|x|^2 + \epsilon (\log
	\rho^{\epsilon}_{M_\epsilon(t)}+1)
	\right] \cr
	&\geq \int \nabla (\rho^{\epsilon}_{M_\epsilon(t)})^p \cdot
	\nabla (-\Delta)^{-s}  \rho^{\epsilon}_{M_\epsilon(t)}
	- N\int \big(\rho^{\epsilon}_{M_\epsilon(t)}\big)^p \cr
	& \geq \frac{4p}{(p+1)^2}\int\left|(-\Delta)^{\frac{1-s}{2}}(\rho^{\epsilon}_{M_\epsilon(t)})^{\frac{p+1}{2}}\right|^{2}\mathrm{d}x
	- N\int \big(\rho^{\epsilon}_{M_\epsilon(t)}\big)^p,
	\end{align*}
	where the Stroock-Varopoulos inequality \eqref{eq:svineq} is used. Since a direct application of Nash-Gagliardo-Nirenberg inequality \eqref{GN}
	does not give bounds on $\|\rho^{\epsilon}_{M_\epsilon(t)}\|_\infty$, a classical Moser type iteration is used first for different norms.
	Then we apply the Nash-Gagliardo-Nirenberg inequality \eqref{GN}  with the choices
	\[
	\alpha=1-s,\quad l=\frac{2p}{p+1},\quad \theta=\frac{l}{2}=\frac{p}{p+1},\quad
	m=\frac{N(2p+1)}{(N-1+s)(p+1)}
	\]
	and $v=\big(\rho^{\epsilon}_{M_\epsilon(t)}\big)^{(p+1)/2}$, in order to obtain
	\begin{align*}
	\int (\rho^{\epsilon}_{M_\epsilon(t)})^{p}\mathrm{d}x&\geq C(N,s,p)\|w\|_{m}^{\theta+1}
	=C(N,s,p)\|\rho^{\epsilon}_{M_\epsilon(t)}\|_{\mathsf{q}}^{(2p+1)/2}
	\end{align*}
	with
	$
	\mathsf{q}=\frac{N(2p+1)}{2(N-1+s)}>p
	$.
	Then we finally have
	\begin{equation}
	\|\rho^{\epsilon}_{M_\epsilon(t)}\|_{p}^{2p}\geq \mathsf{C} \|\rho^{\epsilon}_{M_\epsilon(t)}\|_{\mathsf{q}}^{2p+1},\label{firstiter}
	\end{equation}
	where the constant $\mathsf{C}=\mathsf{C}(N,s,p)$ verifies the asymptotic
	\[
	\mathsf{C}=\mathsf{C}(p,s)\sim \frac{C(s,N)}{p^2}\quad as\,\, p\rightarrow\infty.
	\]
	Now we can use~\eqref{firstiter} to iterate between different norms to get $\|\rho^{\epsilon}_{M_\epsilon(t)}\|_{\infty}$. We set $p_{0}=p$ and
	\[
	p_{k+1}=\sigma\left(p_{k}+\frac{1}{2}\right),\quad \sigma=\frac{N}{N-1+s}.
	\]
	We notice that, by our assumptions, the sequence $\left\{p_{k}\right\}$ is defined through
	\[
	p_{k}=A(\sigma^{k}-1)+p,\quad A=\frac{N}{2(1-s)}+p>0
	\]
	so that $p_{k}<p_{k+1}$ and $\lim_{k\rightarrow\infty}p_{k}=+\infty$. Then inequality \eqref{firstiter} provides
	\begin{equation*}
	\mathsf{C}(N,s,p_{k}) \|\rho^{\epsilon}_{M_\epsilon(t)}\|_{p_{k+1}}^{2p_{k}+1}\leq \|\rho^{\epsilon}_{M_\epsilon(t)}\|^{2p_{k}}_{p_{k}}
	\end{equation*}
	from which (recalling that $p_{k}\rightarrow\infty$),
	\begin{equation*}
	\|\rho^{\epsilon}_{M_\epsilon(t)}\|_{p_{k+1}}\leq \mathsf{C}(p_{k},s)^{-\frac{1}{2p_{k}+1}}
	\|\rho^{\epsilon}_{M_\epsilon(t)}\|^{\sigma\frac{p_{k}}{p_{k+1}}}_{p_{k}}.
	\end{equation*}
	Since
	\[
	\lim_{k\rightarrow\infty}\mathsf{C}(p_{k},s)^{-\frac{1}{2p_{k}+1}}=1,
	\]
	setting $U_k=\|\rho^{\epsilon}_{M_\epsilon(t)}\|_{p_{k}}$, we can iterate as in \cite[Proposition 5.4]{VazVol2} and we can pass the limit and obtain the following bound of the $L^{\infty}$ norm of
	$\rho^{\epsilon}_{M_\epsilon(t)}$ in terms of its $L^{p}$ norm:
	\begin{equation*}
	\|\rho^{\epsilon}_{M_\epsilon(t)}\|_{\infty}\leq C\|\rho^{\epsilon}_{M_\epsilon(t)}\|_{p}^{\frac{2p(1-s)}{1+2p(1-s)}}.\label{LpLinfty}
	\end{equation*}
	Since this inequality holds for all $p$, we can choose $p=1$, thus
	\begin{align*}
	\|\rho^{\epsilon}_{M_\epsilon(t)}\|_{\infty}\leq C(N,s)M_\epsilon(t)^{\frac{2(1-s)}{1+2(1-s)}}\leq C(N,s,M_{0}),
	\end{align*}
	hence the proof follows.
\end{proof}

We now focus on the convergence of the solution $\rho^\epsilon(t)$ of the regularised equation~\eqref{eq:rhoregeq} towards the time-dependent profile
$\rho^\epsilon_{M_\epsilon(t)}$,
to recover the same rate as the regularization constant $\epsilon$ goes to zero.
Arguing similarly to the smooth case in one dimension, we obtain
\[
\frac{\mathrm{d}}{\mathrm{d}t} H_\epsilon[\rho^{\epsilon}]=-I_{\epsilon}[\rho^{\epsilon}]-P(t)^{-\delta}
\int (\rho^\epsilon)^r \left[(-\Delta)^{-s}\rho^{\epsilon}+\frac{|x|^{2}}{2}+\epsilon(1+ \log
\rho^{\epsilon}))\right],
\]
and
\begin{align*}
\frac{\mathrm{d}}{\mathrm{d}t} H_\epsilon[\rho^{\epsilon}_{M_\epsilon(t)}]
&=\int\left((-\Delta)^{-s}\rho^{\epsilon}_{M_\epsilon(t)} + \frac{|x|^2}{2}
+\epsilon(\log \rho^\epsilon_{M_\epsilon(t)} +1)\right)
\frac{\partial\rho^{\epsilon}_{M_\epsilon(t)}}{\partial t}\\
&=\xi_\epsilon(t)\int \frac{\partial\rho^{\epsilon}_{M_\epsilon(t)}}{\partial
	t}\mathrm{d}x=\xi_\epsilon(t)\frac{\mathrm{d}}{\mathrm{d}t}M_\epsilon(t),
\end{align*}
where the Euler-Lagrange equation~\eqref{ELeqapp} satisfied by the regularised profile $\rho^{\epsilon}_{M_\epsilon(t)}$ is used.
From $\frac{\mathrm{d}}{\mathrm{d}t}M_\epsilon(t) = -P(t)^{-\delta}\int (\rho^\epsilon)^r$, $\frac{d}{dt} H_\epsilon[\rho^{\epsilon}_{M_\epsilon(t)}]$
can be written as
\begin{align*}
\frac{d}{dt} H_\epsilon[\rho^{\epsilon}_{M_\epsilon(t)}]
&=-\xi_\epsilon(t)P(t)^{-\delta}\int (\rho^\epsilon)^r \mathrm{d}x \cr
&=-P(t)^{-\delta}\int\left((-\Delta)^{-s}\rho^{\epsilon}_{M(t)} + \frac{1}{2}|x|^2 + \epsilon (\log \rho^{\epsilon}_{M(t)}+1)\right) (\rho^\epsilon)^r \mathrm{d}x.
\end{align*}
Therefore the time derivative of the relative entropy becomes
\begin{align}
\frac{d}{dt} \big(
H_\epsilon[\rho^{\epsilon}]-H_\epsilon[\rho^{\epsilon}_{M_\epsilon(t)}]\big)=
-I_\epsilon[\rho^\epsilon] - P(t)^{-\delta}\int (\rho^\epsilon)^r \big[     (-\Delta)^{-s}(\rho^{\epsilon}-\rho^{\epsilon}_{M_\epsilon(t)})
+\epsilon \log (\rho^{\epsilon}/\rho^{\epsilon}_{M_\epsilon(t)})\big].\label{evolapproxentropy}
\end{align}
On the other hand, the logarithmic term in~\eqref{evolapproxentropy} can be written as
\begin{align*}
\int(\rho^\epsilon)^{r}\log\frac{\rho^\epsilon}{\rho^{\epsilon}_{M_\epsilon(t)}}\mathrm{d}x
=\frac{\|\rho^{\epsilon}_{M_\epsilon(t)}\|_{r}^{r}}{r}
\int\left(\frac{\rho^\epsilon}{\rho^{\epsilon}_{M_\epsilon(t)}}\right)^{r}
\log\left(\frac{\rho^\epsilon}{\rho^{\epsilon}_{M_\epsilon(t)}}\right)^{r}d\mu,
\qquad
d\mu=\frac{(\rho^{\epsilon}_{M(t)})^{r}}{\|\rho^{\epsilon}_{M(t)}\|_{r}^{r}}\mathrm{d}x.
\end{align*}
By Jensen's inequality again we have
\begin{align*}
\int(\rho^\epsilon)^{r}\log\frac{\rho^\epsilon}{\rho^{\epsilon}_{M_\epsilon(t)}}\mathrm{d}x
&\geq \frac{\|\rho^{\epsilon}_{M_\epsilon(t)}\|_{r}^{r}}{r}
\left[\int\left(\frac{\rho^\epsilon}{\rho^{\epsilon}_{M_\epsilon(t)}}\right)^{r}\mathrm{d}\mu\right] \log\left[\int\left(\frac{\rho^\epsilon}
{\rho^{\epsilon}_{M_\epsilon(t)}}\right)^{r}\mathrm{d}\mu\right]\cr
&\geq -\frac{\|\rho^{\epsilon}_{M_\epsilon(t)}\|_{r}^{r}}{r}\frac{1}{e},
\end{align*}
where the fact that $f(z)=z \log z  \geq -1/e$ for $z \geq 0$ is used
in the last step.
As a result, by Proposition \ref{Propregminim} we find
\[
-\epsilon \int
(\rho^\epsilon)^{r}\log\frac{\rho^\epsilon}{\rho^{\epsilon}_{M_\epsilon(t)}}\mathrm{d}x\leq\epsilon
C \|\rho^{\epsilon}_{M_\epsilon(t)}\|_{r}^{r}
\leq\epsilon C_1(\|u_{0}\|_1,r,s).
\]
Now we notice that (see \cite[Proposition 2.4]{MR3279352} for more details)
the entropy-entropy dissipation inequality is still valid for $H_\epsilon$, i.e.,
\begin{equation*}
H_\epsilon[\rho^\epsilon] -
H_\epsilon[\rho^{\epsilon}_{M_{\epsilon}(t)}] \leq \frac{1}{2}I_\epsilon[\rho^\epsilon].
\end{equation*}
Consequently we get the following self-contained differential inequality from \eqref{evolapproxentropy},
\begin{multline}
\frac{d}{dt}\label{diffineqeps}
\big( H_\epsilon[\rho^{\epsilon}]-H_\epsilon[\rho^{\epsilon}_{M_{\epsilon}(t)}]\big)
\leq
-2\big( H_\epsilon[\rho^{\epsilon}]-H_\epsilon[\rho^{\epsilon}_{M(t)}]\big) \cr
+C(\|u_{0}\|_1,\|u_{0}\|_{\infty},r,s)P(t)^{-\delta}
\big( H_\epsilon[\rho^{\epsilon}]-H_\epsilon[\rho^{\epsilon}_{M(t)}\big)^{1/2}
+\epsilon C_1(\|u_{0}\|_1,r,s) P(t)^{-\delta}.
\end{multline}
Now we can prove Theorem~\ref{thm:abs} about the exponential convergence of
relative entropy of general weak solutions, by taking the limit as $\epsilon$
goes to zero.

\begin{proof}[Proof of Theorem~\ref{thm:abs}]
	If we set $f_\epsilon(t) =
	H_\epsilon[\rho^{\epsilon}(t)]-H_\epsilon[\rho^{\epsilon}_{M_{\epsilon}(t)}] $, then \eqref{diffineqeps} implies that  $f_\epsilon$  satisfies the differential inequality
	\[
	f'_\epsilon(t) \leq -2f +C_1e^{-\lambda \delta t}f^{1/2} + \epsilon C_2 e^{-\lambda \delta t}.
	\]
	From Lemma~\ref{lem:fdeq} below, we have
	\[
	\liminf_{\epsilon \to 0^+} \big( H_\epsilon[\rho^{\epsilon}(t)]-H_\epsilon[\rho^{\epsilon}_{M_{\epsilon}(t)}]\big)
	\leq C(1+t)^2e^{-2\min(1,\lambda\delta)}
	\]
	and our aim now is to show the above limit on the left hand side is larger than $H[\rho(t)]-H[\rho_{M(t)}]$. Since $\rho^\epsilon$ is uniformly bounded
	in $L^p(\mathbb{R})$ with finite second moment, $|\int \rho^\epsilon \log
	\rho^\epsilon| $ is also uniformly bounded. From the lower semi-continuity
	of the entropy $H[\rho]$ (\cite[Theorem 3.1]{MR3279352}), when $\epsilon$ goes to zero,  $\rho^\epsilon(t)$ 	converges to $\rho(t)$ in weak $L^1$, and
	\[
	\liminf_{\epsilon \to 0^+} H_\epsilon[\rho^\epsilon(t)]
	\geq 	\liminf_{\epsilon \to 0^+} H[\rho^\epsilon(t)]
	= H[\rho(t)].
	\]
	For the other term $H_\epsilon[\rho^{\epsilon}_{M_{\epsilon}(t)}]$,
	we can show that the limit as $\epsilon$ goes to zero actually exist.
	If we define
	$$\mathcal{H}[\eta] = \pi M_\epsilon(t)^{-2}\int |x|^2\eta + \int \eta\log \eta$$
	for any function $\eta$ in $L^1(\mathbb{R},(1+|x|^2)\mathrm{d}x)$ such that $\int \eta = M_\epsilon(t)$, then
	$\mathcal{H}[\eta]$ is non-negative and $\mathcal{H}[\eta] =0$
	only when $\eta(x) = \exp(-\pi |x|^2/M_\epsilon(t)^2)$.
	If  $\epsilon$ is restricted to be less than $\frac{1}{2\pi}M_{\epsilon}(t)$,
	\begin{align*}
	H_\epsilon[\rho^\epsilon_{M_\epsilon(t)}] &=
	\epsilon \mathcal{H}[\rho^\epsilon_{M_\epsilon(t)}] +
	(1-2\epsilon\pi M_\epsilon(t)^{-2})H[\rho_{M_\epsilon(t)}^\epsilon]
	+\epsilon\pi M_\epsilon(t)^{-2} \int \rho_{M_\epsilon(t)}^\epsilon (-\Delta)^{-s}\rho_{M_\epsilon(t)}^\epsilon \cr
	&\geq \left(1-2\epsilon\pi M_\epsilon(t)^{-2}\right)H[\rho_{M_\epsilon(t)}],
	\end{align*}
	which implies that
	\[
	\liminf_{\epsilon\to 0}  H_\epsilon[\rho^\epsilon_{M_\epsilon(t)}] \geq \lim_{\epsilon\to 0}
	\left(1-2\epsilon\pi M_\epsilon(t)^{-2}\right)H[\rho_{M_\epsilon(t)}] = H[\rho_{M(t)}].
	\]
	Here in the last step the limit exists
	because the profile $\rho_{M_\epsilon(t)}$ converges to $\rho_{M(t)}$ and due to the boundedness $\int \rho_{M_\epsilon(t)} \log \rho_{M_\epsilon(t)}$. Moreover, due to the minimality of $\rho_{M_\epsilon(t)}$ we have
	\[
	H_\epsilon[\rho^\epsilon_{M_\epsilon(t)}]\leq H_\epsilon[\rho_{M_\epsilon(t)}]=H[\rho_{M_\epsilon(t)}]+\epsilon\int \rho_{M_\epsilon(t)} \log \rho_{M_\epsilon(t)}\mathrm{d}x
	\]
	then
	\[
	\limsup_{\epsilon\to 0}  H_\epsilon[\rho^\epsilon_{M_\epsilon(t)}] \leq H[\rho_{M(t)}]
	\]
	which finally gives
	\[
	\lim_{\epsilon\to 0}  H_\epsilon[\rho^\epsilon_{M_\epsilon(t)}] = H[\rho_{M(t)}].
	\]
	Putting all these together, we get the exponential convergence~\eqref{exponentialconv},
	which is exactly~\eqref{eq:relenabs} via the
	change of variables~\eqref{eq:simivar}. The convergence of $u(\tau)-u_{M(\tau)}$
	in the $H^{-s}(\mathbb{R})$ norm is then a direct consequence of~\eqref{frac_lap_rel_entropy}.
\end{proof}

In the previous theorem we used the following technical lemma.
\begin{lem} \label{lem:fdeq}
	Let $f_\epsilon(t)$ be a family of non-negative functions defined on $t \in [0,\infty)$ satisfying the differential inequalities $f'_\epsilon(t)\leq F(f_\epsilon(t),t,\epsilon)$.  Here  $\epsilon \in (0,\bar{\epsilon}]$
	for some $\bar{\epsilon}>0$ and
	\[
	F(f,t,\epsilon) = -2f+C_1e^{-\lambda\delta t} f^{\frac{1}{2}}+\epsilon C_2 e^{-\lambda\delta t}
	\]
	for some  positive constants $C_1$, $C_2$ and $\lambda \delta>0$. Then the following statements hold.
	\begin{enumerate}[(a)]
		\item The family of functions $f_\epsilon$ satisfies the bound
		\[	f_\epsilon(t) \leq e^{-2t}f_\epsilon(0)+C_4(1+t)e^{-\min(2,\lambda \delta)t}\]	
		where $$C_4 = C_1C_3^{1/2}+\bar{\epsilon}C_2\,\quad\, and\,\quad
		C_3=\max\left\{\sup_{\epsilon \in (0,\bar{\epsilon}]}
		f_\epsilon(0),C_1^2,\bar{\epsilon}C_2\right\};$$
		\item we have
		$$
		\liminf_{\epsilon \to 0^+} f_\epsilon(t) \leq C(1+t)^2e^{-2\min(1,\lambda\delta)t}.
		$$
		for some positive constant $C$ depending on $\bar{\epsilon},C_1,C_2$ and $\lim_{\epsilon\to 0^+} f_\epsilon(0).$
	\end{enumerate}
\end{lem}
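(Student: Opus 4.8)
The plan is to reduce each part to a scalar linear differential inequality that can be integrated explicitly; the only nonroutine point is the non-Lipschitz term $f^{1/2}$ together with the $\epsilon$-term, which I would handle by a shift.

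For part (a), I would first establish the uniform bound $f_\epsilon(t)\le C_3$ for all $t\ge0$ by a barrier argument. At $t=0$ this holds since $f_\epsilon(0)\le\sup_\epsilon f_\epsilon(0)\le C_3$, and whenever $f_\epsilon(t)=C_3$ the definition of $C_3$ (namely $C_3\ge C_1^2$ and $C_3\ge\bar\epsilon C_2$), together with $e^{-\lambda\delta t}\le1$, gives
\[
f_\epsilon'(t)\le -2C_3+C_1C_3^{1/2}+\bar\epsilon C_2\le -2C_3+C_3+C_3=0,
\]
so $f_\epsilon$ cannot cross the level $C_3$. With this bound the inequality linearizes to $f_\epsilon'\le -2f_\epsilon+C_4e^{-\lambda\delta t}$ with $C_4=C_1C_3^{1/2}+\bar\epsilon C_2$. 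Multiplying by the integrating factor $e^{2t}$, integrating on $[0,t]$, and using the elementary estimate $e^{-2t}\int_0^t e^{(2-\lambda\delta)\tau}\,\mathrm{d}\tau\le t\,e^{-\min(2,\lambda\delta)t}$ (which follows from $\int_0^t e^{a\tau}\,\mathrm{d}\tau\le t\,e^{\max(0,a)t}$ with $a=2-\lambda\delta$) then yields the stated bound.

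For part (b), the sharper rate $2\min(1,\lambda\delta)$ — faster than the rate $\min(2,\lambda\delta)$ of part (a) when $\lambda\delta<1$ — reflects that the $\epsilon C_2$ term disappears in the limit, after which the substitution $g=f^{1/2}$ turns the inequality into one that is \emph{linear in $g$}. The difficulty is that this substitution is singular where $f$ vanishes, and the $\epsilon C_2$ term produces a contribution $\epsilon C_2 e^{-\lambda\delta t}/(2g)$ that blows up as $g\to0$. I would circumvent both obstacles at once by working with the shifted quantity $g_\epsilon=\sqrt{f_\epsilon+\epsilon}$, which stays bounded below by $\sqrt\epsilon$. Computing from $2g_\epsilon g_\epsilon'=f_\epsilon'$ and using $f_\epsilon^{1/2}\le g_\epsilon$ and $-2f_\epsilon\le -2g_\epsilon^2+2\epsilon$, the two leftover contributions $\epsilon/g_\epsilon$ and $\epsilon C_2 e^{-\lambda\delta t}/(2g_\epsilon)$ are each controlled by $\sqrt\epsilon$ thanks to the lower bound $g_\epsilon\ge\sqrt\epsilon$, giving
\[
g_\epsilon'\le -g_\epsilon+\tfrac{C_1}{2}e^{-\lambda\delta t}+\sqrt\epsilon\Big(1+\tfrac{C_2}{2}\Big);
\]
it is precisely the choice of shift equal to $\epsilon$ that makes both error terms of order $\sqrt\epsilon$.

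Finally I would integrate this linear inequality with the factor $e^{t}$, using $e^{-t}\int_0^te^{(1-\lambda\delta)\tau}\,\mathrm{d}\tau\le t\,e^{-\min(1,\lambda\delta)t}$ and $\int_0^te^\tau\,\mathrm{d}\tau\le e^t$, to obtain
\[
g_\epsilon(t)\le e^{-t}\sqrt{f_\epsilon(0)+\epsilon}+\tfrac{C_1}{2}\,t\,e^{-\min(1,\lambda\delta)t}+\sqrt\epsilon\Big(1+\tfrac{C_2}{2}\Big).
\]
Since $f_\epsilon(t)\le f_\epsilon(t)+\epsilon=g_\epsilon(t)^2$, squaring and taking $\liminf_{\epsilon\to0^+}$ annihilates the $\sqrt\epsilon$ terms and replaces $f_\epsilon(0)$ by $\lim_{\epsilon\to0^+}f_\epsilon(0)$; bounding $e^{-t}\le e^{-\min(1,\lambda\delta)t}$ then collects everything into $C(1+t)^2e^{-2\min(1,\lambda\delta)t}$, with $C$ depending only on $\lim_{\epsilon\to0^+}f_\epsilon(0)$ and $C_1$ (and, through the admissible range of $\epsilon$, on $\bar\epsilon$ and $C_2$). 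I expect the shift $g_\epsilon=\sqrt{f_\epsilon+\epsilon}$ to be the crux of the argument: it simultaneously regularizes the singularity of $f\mapsto f^{1/2}$ at the origin and balances the $\epsilon$-perturbation so that it vanishes in the limit, which is exactly what upgrades the decay exponent from part (a) to part (b).
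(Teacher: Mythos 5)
Your proof is correct. Part (a) follows the paper's own argument essentially verbatim (barrier at $C_3$, linearisation, integrating factor). Part (b), however, takes a genuinely different route. The paper handles the limit $\epsilon\to 0^+$ by comparing $f_\epsilon$ with the exact solution $g_\epsilon$ of the ODE $g_\epsilon'=F(g_\epsilon,t,\epsilon)$ and then invoking continuous dependence of ODE solutions on the parameter $\epsilon$, with a separate (and somewhat delicate) discussion of the non-Lipschitz point $f=0$ and of where the comparison can be initialised; the decay of the limiting solution $g(t)$ is then inherited from the Gronwall argument of Theorem~\ref{expdecaysmooth}. Your substitution $g_\epsilon=\sqrt{f_\epsilon+\epsilon}$ replaces all of this machinery by a single explicit linear differential inequality: the shift by exactly $\epsilon$ keeps $g_\epsilon\geq\sqrt{\epsilon}$, which both regularises the square root and converts the $\epsilon C_2$ forcing term into an $O(\sqrt{\epsilon})$ error, so the limit $\epsilon\to 0^+$ is taken at the level of an explicit closed-form bound rather than through an ODE-stability theorem. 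This is more elementary and self-contained, avoids the paper's somewhat informal treatment of the comparison near $f=0$, and in fact yields the stronger conclusion $\limsup_{\epsilon\to 0^+}f_\epsilon(t)\leq C(1+t)^2e^{-2\min(1,\lambda\delta)t}$, since your upper envelope converges (not merely has a convergent subsequence) as $\epsilon\to 0^+$. The only hypotheses you use beyond the paper's are the implicit ones already needed there: that $f_\epsilon$ is absolutely continuous so the chain rule and integration apply, and that $\lim_{\epsilon\to 0^+}f_\epsilon(0)$ exists, which the lemma's statement already presupposes.
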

\begin{proof}
	
	(a) First by the classical comparison principle of ODEs between $f_\epsilon(t)$ and the constant $C_3$ defined above, $F(C_3,t,\epsilon) \leq 0$ and hence
	$f_\epsilon(t)$ is uniformly bounded above by $C_3$. Therefore, $f_\epsilon(t)$ satisfies the simpler differential inequality
	\[
	f'_\epsilon  \leq -2f_\epsilon + C_4 e^{-\lambda \delta t}.
	\]
	The desired inequality is obtained by integrating the equivalent inequality
	$\frac{d}{dt}(e^{2t}f_\epsilon(t))\leq C_4e^{(2-\lambda\delta)t}$.

	(b) In addition to the comparison principle of differential inequalities,
	continuous dependence of solutions of ODEs on the parameter $\epsilon$ will also be used, where the main barrier of non-Lipschitz continuity of $F(f,t,\epsilon)$ at $f=0$ in applying these techniques is considered separately.
	
	First define $g_\epsilon(t)$ to be the solution of the ODE $g'_\epsilon(t) =F(g_\epsilon(t),t,\epsilon)$ with $g_\epsilon(0)=f_\epsilon(0)$. 	
	Then from $g'_\epsilon(t)\geq -2g_\epsilon(t)$, we have  $g_\epsilon(t) \geq g_\epsilon(0)e^{-2t}$ on
	any finite interval $[0,T]$ and $F(g_\epsilon,t,\epsilon)$
	is now Lipschitz on $(g_\epsilon(0),\infty)\times (0,\infty) \times (0,\bar{\epsilon})$. Then by the continuous dependence of ODEs, the limit
	$g(t) = \lim_{\epsilon \to 0^+} g_\epsilon(t)$
	exists, and $g(t)$ satisfies $g'(t)=F(g(t),t,0)$ with the initial condition $g(0)=\liminf_{\epsilon\to 0^+} f_\epsilon(0)$. Moreover, $g(t) \leq
	C(1+t)^2e^{-2\min(1,\lambda\delta)t}$ by the same argument as in the proof
	of Theorem~\ref{expdecaysmooth}.
	
	Next we apply the comparison principle between $f_\epsilon$ and $g_\epsilon$. If $f_\epsilon(t) < f_\epsilon(0)e^{-2t}$, we get
	$f_\epsilon(t) \leq g_\epsilon(t)$; otherwise if $f_\epsilon(t) \geq f_\epsilon(0)e^{-2t}$ on some interval $[t_0,t_1]$, the comparison principle between $f_\epsilon(t)$ and $g_\epsilon(t)$ still applies, where the initial condition $f_\epsilon(t)\leq g_\epsilon(t)$ can be enforced at $t=t_0$ by choosing $t_0$ as small as possible. In either case,
	the relation $f_\epsilon(t)\leq g_\epsilon(t)$ is satisfied on any finite interval $[0,T]$, on which
	\[
	\liminf_{\epsilon \to 0^+} f_\epsilon(t) \leq \lim_{\epsilon\to 0^+} g_\epsilon (t) = g(t).
	\]
\end{proof}

\subsection{Comments on $L^2$ and $L^1$ decays}\label{Lpdecay}
Once the exponential convergence in the relative entropy $H[\rho(t)|\rho_{M(t)}]$
(or $H[u(\tau)|u_{M(\tau)}]$) is established, the convergence in $\dot{H}^{-s}(\mathbb{R})$ norm or in the Wasserstein metric $W_2$  is straightforward, using the inequalities~\eqref{frac_lap_rel_entropy} and~\eqref{eq:w2relent}. However, the convergence in other common norms
like $L^1(\mathbb{R})$ or $L^2(\mathbb{R})$ is less obvious. In the case of classical heat equation,
the convergence in $L^1(\mathbb{R})$ norm can be derived from the relative Boltzmann entropy, using the well-known Csisz\'{a}r-Kullback inequality. For the entropy $H[\rho]$ used here,  the convergence in $L^1(\mathbb{R})$ or $L^2(\mathbb{R})$ norm can not be established directly using similar inequalities, and  has to rely on interpolation lemma like below~\cite[Theorem 3.4]{MR3279352}, with additional assumptions on the H\"{o}lder regularity of the solution $\rho(t)$.
\begin{lem}[Interpolation between norms]\label{interpolemm} Let $0<\mathsf{a}\leq1$, $0<s<N/2$ and $0<\mathsf{a}<\alpha/2$. There exists
	a constant $C$ depending on $N$, $s$ and $\mathsf{a}$ only, such that
	\[
	\|u\|_2 \leq C \|(-\Delta)^{-s/2}u\|_2^{\sigma_1} [u]_\alpha^{\sigma_2}
	\|u\|_1^{\sigma_3}
	\]
	for any function $u \in L^1(\mathbb{R}^N)\bigcap C^\alpha(\mathbb{R}^N)$ with
	\[
	\sigma_{1}=\frac{\mathsf{a}}{\mathsf{a}+s},\quad
	\sigma_2 = \frac{s(N+2\mathsf{a})}{2(N+\alpha)(s+\mathsf{a})},\quad
	\sigma_3 = \frac{s(N+2\alpha-2\mathsf{a})}{2(N+\alpha)(s+\mathsf{a})},
	\]
where $[\cdot]_\alpha$ denotes the H\"{o}lder seminorm.
\end{lem}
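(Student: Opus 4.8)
The plan is to prove the inequality by interpolating through the intermediate quantity $\|(-\Delta)^{\mathsf{a}/2}u\|_2$, where $\mathsf{a}$ is precisely the parameter appearing in the statement. Writing $1-\sigma_1 = \frac{s}{s+\mathsf{a}}$, I would establish the two factor inequalities
\[
\|u\|_2 \le C\,\|(-\Delta)^{-s/2}u\|_2^{\sigma_1}\,\|(-\Delta)^{\mathsf{a}/2}u\|_2^{1-\sigma_1}
\qquad\text{and}\qquad
\|(-\Delta)^{\mathsf{a}/2}u\|_2 \le C\,[u]_\alpha^{\,p}\,\|u\|_1^{1-p},
\]
with $p = \frac{N+2\mathsf{a}}{2(N+\alpha)}$, and then compose them. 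A direct computation shows that $\sigma_2 = p(1-\sigma_1)$ and $\sigma_3 = (1-p)(1-\sigma_1)$ reproduce exactly the stated exponents, and the scale invariance of the target inequality under $u(\cdot)\mapsto u(\mu\,\cdot)$ confirms that these are the only admissible powers at each stage.

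The first factor is the standard interpolation of homogeneous Sobolev norms. By Plancherel, $\|u\|_2^2=\int|\widehat u|^2$, $\|(-\Delta)^{-s/2}u\|_2^2=\int|\xi|^{-2s}|\widehat u|^2$ and $\|(-\Delta)^{\mathsf{a}/2}u\|_2^2=\int|\xi|^{2\mathsf{a}}|\widehat u|^2$; splitting the frequency integral at $|\xi|=R$ gives $\int_{|\xi|<R}|\widehat u|^2\le R^{2s}\|(-\Delta)^{-s/2}u\|_2^2$ and $\int_{|\xi|>R}|\widehat u|^2\le R^{-2\mathsf{a}}\|(-\Delta)^{\mathsf{a}/2}u\|_2^2$, and optimising over $R$ yields the estimate with $\sigma_1=\mathsf{a}/(s+\mathsf{a})$. (Equivalently one applies Hölder in $\xi$ to $|\widehat u|^2=(|\xi|^{-2s}|\widehat u|^2)^{\sigma_1}(|\xi|^{2\mathsf{a}}|\widehat u|^2)^{1-\sigma_1}$, the weight cancelling precisely because $-s\sigma_1+\mathsf{a}(1-\sigma_1)=0$.) The finiteness of $\|(-\Delta)^{\mathsf{a}/2}u\|_2$ is then guaranteed by the second factor.

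The second factor is the crux. I would use the Gagliardo representation $\|(-\Delta)^{\mathsf{a}/2}u\|_2^2 = c_{N,\mathsf{a}}\int_{\mathbb R^N}g(z)\,|z|^{-N-2\mathsf{a}}\,\mathrm{d}z$ (valid for $0<\mathsf{a}<1$), where $g(z)=\int|u(x)-u(x+z)|^2\,\mathrm{d}x$, and split the $z$-integral at a radius $\eta$. For small $|z|$ I combine the two data pointwise, $|u(x)-u(x+z)|^2\le [u]_\alpha|z|^\alpha(|u(x)|+|u(x+z)|)$, so that $g(z)\le 2[u]_\alpha\|u\|_1|z|^\alpha$, and the corresponding contribution is $C[u]_\alpha\|u\|_1\,\eta^{\alpha-2\mathsf{a}}$, finite exactly because the hypothesis $\mathsf{a}<\alpha/2$ renders $|z|^{\alpha-N-2\mathsf{a}}$ integrable near the origin. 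For large $|z|$ the naive bound $g(z)\le 4\|u\|_2^2$ is useless as it reintroduces $\|u\|_2$; instead I use the elementary interpolation $\|u\|_\infty\le C[u]_\alpha^{N/(N+\alpha)}\|u\|_1^{\alpha/(N+\alpha)}$ (proved by observing that $|u|\ge\tfrac14\|u\|_\infty$ on a ball of radius $\sim(\|u\|_\infty/[u]_\alpha)^{1/\alpha}$ about a near-maximiser and integrating), which yields the uniform bound $g(z)\le 4\|u\|_\infty\|u\|_1\le C[u]_\alpha^{N/(N+\alpha)}\|u\|_1^{(N+2\alpha)/(N+\alpha)}$ and hence a far-field contribution proportional to $\eta^{-2\mathsf{a}}$. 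Optimising the sum of the two contributions over $\eta$ then produces exactly the exponents $2p$ on $[u]_\alpha$ and $2(1-p)$ on $\|u\|_1$.

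The main obstacle is precisely this far-field part of the Gagliardo integral: the Hölder bound degrades as $|z|\to\infty$, and the argument cannot be closed without a tail estimate on $g$ that avoids $\|u\|_2$. The $L^\infty$–$L^1$–$C^\alpha$ interpolation resolves this, and it is here that all three quantities genuinely enter. Two smaller points require attention. First, the endpoint $\mathsf{a}=1$ permitted by the hypothesis $0<\mathsf{a}\le1$ (which forces $\alpha>2$) falls outside the Gagliardo representation and would instead be treated by bounding $\|\nabla u\|_2$ via an analogous interpolation applied to $\nabla u$. Second, one must track the constants through the frequency optimisation, the singular factor $(\alpha-2\mathsf{a})^{-1}$, and the $L^\infty$ interpolation, and verify that they can be organised into the dependence on $N$, $s$ and $\mathsf{a}$ asserted in the statement, with $\alpha$ entering only through the structural constraint $\mathsf{a}<\alpha/2$.
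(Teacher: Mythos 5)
The paper does not prove this lemma at all: it is quoted verbatim from \cite[Theorem 3.4]{MR3279352}, so there is no internal proof to compare against. Your argument is correct and is, in essence, the standard (and the cited reference's) route: interpolate $\|u\|_2$ between $\dot H^{-s}$ and $\dot H^{\mathsf{a}}$ by H\"older in Fourier space, then control $\|(-\Delta)^{\mathsf{a}/2}u\|_2$ via the Gagliardo representation, splitting the increment integral at a radius $\eta$, using the H\"older seminorm for small increments and the $L^\infty$--$L^1$--$C^\alpha$ interpolation $\|u\|_\infty\le C[u]_\alpha^{N/(N+\alpha)}\|u\|_1^{\alpha/(N+\alpha)}$ for the tail; I have checked that optimising over $\eta$ and composing reproduces exactly $\sigma_1=\mathsf{a}/(\mathsf{a}+s)$, $\sigma_2=p(1-\sigma_1)$ and $\sigma_3=(1-p)(1-\sigma_1)$ with $p=(N+2\mathsf{a})/(2(N+\alpha))$, and that the scaling identity $-N/2=\sigma_1(-N/2-s)+\sigma_2\alpha-\sigma_3 N$ holds. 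The two caveats you flag are real but harmless here: the constant does carry a dependence on $\alpha$ (through $(\alpha-2\mathsf{a})^{-1}$ and the $L^\infty$ interpolation), which the statement suppresses, and the endpoint $\mathsf{a}=1$ (forcing $\alpha>2$, where $[\cdot]_\alpha$ degenerates) is irrelevant for the application, which takes $0<\mathsf{a}<\min(\alpha,1-s)/2$.
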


The explicit expression~\eqref{eq:profile} of $\rho_{M(t)}$ implies that
$\rho_{M(t)}$ is $(1-s)$-H\"{o}lder continuous with $[\rho_{M(t)}]_\alpha$
depending only on $M(t)$. If the solution $\rho(t)$ is also $\alpha$-H\"{o}lder
continuous with uniform in time bound of $[\rho(t)]_\alpha$, then by choosing
$u=\rho(t)-\rho_{M(t)}$ in Lemma~\ref{interpolemm} (with $N=1$) and $0<\mathsf{a}
<\min(\alpha,1-s)/2$,  we get
\[
\|\rho(t)-\rho_{M(t)}\|_2 \leq
C \|(-\Delta)^{-s/2}\big(\rho(t)-\rho_{M(t)}\big)\|_2^{\sigma_1} [\rho(t)-\rho_{M(t)}]_\alpha^{\sigma_2}
\|\rho(t)-\rho_{M(t)}\|_1^{\sigma_3}.
\]
Using~\eqref{frac_lap_rel_entropy}, the uniform in time bound of
$[\rho(t)-\rho_{M(t)}]_\alpha$ and the fact that
$\|\rho(t)\|_{L^1}+\|\rho_{M(t)}\|_{1} \leq 2M_0$, we get
\begin{equation*}\label{estimL2}
\|\rho(t)-\rho_{M(t)}\|_2 \leq C
\Big(H[\rho|\rho_{M(t)}]\Big) ^{\frac{\sigma_1}{2}},
\end{equation*}
for some constant $C$ depending on $\alpha$, $\mathsf{a}$, $M_{0}$. Combining this last inequality with the exponential convergence \eqref{exponentialconv} of the relative entropy $H[\rho(t)|\rho_{M(t)}]$ we obtain the following convergence rate in $L^2$ norm,
\begin{equation}
	\|\rho(t)-\rho_{M(t)}\|_2 \leq C(1+t)^{\sigma_1}
	\exp \Big(-\sigma_1\min{\left(1,\lambda\delta\right)}t\Big).\label{L2 estimate}
	\end{equation}

The convergence in the $L^1(\mathbb{R})$ norm is more involved, because uniform bounds in higher order moments are needed. Arguing as in \cite[Corollary 3.5]{MR3279352} and using \cite[Lemma 2.24]{CarTosc}, we find the following interpolation
between the $L^1(\mathbb{R}^N)$ and $L^2(\mathbb{R}^N)$ norms,
\begin{equation*}
\|\rho(t)-\rho_{M(t)}\|_1\leq C(N,n) \|\rho(t)-\rho_{M(t)}\|_2^{\frac{4n}{4n+1}}\left(\int |x|^{2n}(\rho+\rho_{M(t)})\,\mathrm{d}x\right)^{\frac{1}{1+4n}}.
\end{equation*}
Provided that $u_{0}\in L^1\left(\mathbb{R}, (1+|x|^{2n})\mathrm{d}x\right)\bigcap L^{\infty}(\mathbb{R})$, Lemma \ref{lem:2nm}  guarantees  uniform in time bound of the $2n$-th moment. Therefore, the convergence in $L^2(\mathbb{R})$ norm in \eqref{L2 estimate} yields finally to  the convergence in $L^{1}(\mathbb{R})$ , i.e.
\begin{equation*}
	\|\rho(t)-\rho_{M(t)}\|_1\leq C
	(1+t)^{\frac{4n\sigma_1}{1+4n}}
	\exp \Big(-\frac{4n\sigma_1}{1+4n}\min{\left(1,\lambda\delta\right)}t\Big).
\end{equation*}
Using the similarity variables, the  $L^{1}$ convergence for the solutions of Eq. \eqref{eq:pmea} can also be derived,
\begin{equation*}
\|u(\tau)-U_{M(\tau)}\|_1\leq C	(1+ \log\tau)^{\frac{4n\sigma_1}{1+4n}}	(1+\tau)^{-\frac{4n\sigma_1}{1+4n}\min{\left(\frac{1}{\lambda},\delta\right)}}.
\end{equation*}
This convergence, together with the decay of $L^\infty(\mathbb{R})$
norm of the solution $u(\tau)$ in Proposition~\ref{u:estim}, implies
the convergence in $L^{p}(\mathbb{R})$ for any $p\in(1,\infty)$, although the
rate is unlikely to be optimal.


\section{Fractional diffusion equation with convection}

In this section we focus on the long term asymptotic behaviours of solutions
to the nonlocal porous medium equation~\eqref{eq:pmec} with convection.
As in the previous case with absorption,  basic properties of the solutions
will be reviewed first, followed by the proof of exponential convergence of
relative entropy for smooth solutions and then for more general weak solutions
through limiting process. Detailed proofs in some of the statements below
 will be omitted, if they are straightforward or similar to the case with
 absorption.

Using the same similarity variables~\eqref{eq:simivar}, Eq.~\eqref{eq:pmec} becomes
\begin{equation}\label{eq:smconv}
  \rho_t - \nabla\cdot(\rho \nabla (-\Delta)^{-s}\rho + x\rho )
  = - P(t)^{-\theta}\mathbf{b}\cdot\nabla \rho^q,
\end{equation}
with $P(t)= 1+\lambda\tau=e^{\lambda t}$, $\theta=(N(q-1)+1)/\lambda-1$ and $\rho(x,0)=u_0(x)$.
Here we are interested in the diffusion dominated regime with $\theta>0$, or equivalently
$q>(2N+1-2s)/N$.
The divergence structure of Eq.~\eqref{eq:smconv} implies the conservation of total mass,
namely for all $t \geq0$,
\[
M_0 :=\int u_{0}(x)\mathrm{d}x
= \int \rho(x,t)\mathrm{d}x.
\]
Arguing in the same way as in Proposition \ref{ro:estim}, we get the same estimates.
\begin{prop}\label{ro:estim2}
 Let $\rho$ be a solution to problem \eqref{eq:smconv}. Then estimates \eqref{estim::lp}, \eqref{bound:lp}, \eqref{bound::lp}, \eqref{Linfty_uniform_bound} still hold.
\end{prop}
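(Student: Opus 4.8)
The plan is to repeat, essentially verbatim, the argument used for Proposition~\ref{ro:estim}, the only change being that the underlying $L^p$ bounds for the solution $u$ of~\eqref{eq:pmec} are now supplied by Proposition~\ref{u:estim3} rather than by Proposition~\ref{u:estim}. The crucial observation that makes everything go through is that the convection term is in divergence form and hence contributes nothing to the evolution of the $L^p$ norms. Concretely, I would start from the regularised equation~\eqref{eq:pmecreg} and compute $\tfrac{1}{p}\tfrac{d}{d\tau}\|u^\epsilon(\tau)\|_p^p$ in analogy with~\eqref{eq:dudtau}. The nonlocal diffusion term and the linear regularising term $\epsilon\Delta u^\epsilon$ are handled exactly as before (the Stroock--Varopoulos inequality~\eqref{eq:svineq} for the former, the favourable sign of $-(p-1)\epsilon\int(u^\epsilon)^{p-2}|\nabla u^\epsilon|^2$ for the latter), while the new convection contribution is
\[
-\int (u^\epsilon)^{p-1}\,\mathbf{b}\cdot\nabla (u^\epsilon)^q
= -\frac{q}{p+q-1}\,\mathbf{b}\cdot\int \nabla (u^\epsilon)^{p+q-1} = 0,
\]
by integration by parts, since the integrand is a total divergence and $u^\epsilon$ decays at infinity thanks to the $\epsilon$-regularisation.

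Consequently the differential inequality obtained is identical to the pure-diffusion one,
\[
\frac{1}{p}\frac{d}{d\tau}\|u^\epsilon(\tau)\|_p^p
\leq -\frac{4(p-1)}{(p+1)^2}\int\left|(-\Delta)^{\frac{1-s}{2}}(u^\epsilon)^{\frac{p+1}{2}}\right|^2,
\]
with no competing absorption contribution. This yields at once $\|u(\tau)\|_p\leq\|u_0\|_p$ and, combining it with the fractional Gagliardo--Nirenberg inequality~\eqref{eq:fGNi} exactly as in the proof of Proposition~\ref{u:estim}, the single diffusion-rate decay $\|u(\tau)\|_p\leq C(\|u_0\|_1,N,p,s)\,\tau^{-N(p-1)/p(N+2-2s)}$ asserted in Proposition~\ref{u:estim3}. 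The $p=\infty$ endpoint is again obtained not by letting $p\to\infty$ but by adapting the $L^1$--$L^\infty$ smoothing argument of~\cite[Theorem~6.1]{MR3294409}.

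Finally I would transfer these bounds to $\rho$ through the similarity variables~\eqref{eq:simivar}, following the proof of Proposition~\ref{ro:estim} line by line. Since $\rho(x,t)=(1+\lambda\tau)^{N/\lambda}u(y,\tau)$ and $t=\tfrac{1}{\lambda}\log(1+\lambda\tau)$, the $L^p$ norms pick up the scaling factor $e^{N(1-1/p)t}$, which turns $\|u(\tau)\|_p\leq\|u_0\|_p$ into~\eqref{estim::lp}. Rewriting the diffusion-rate decay in terms of $t$ via $\tau=(e^{\lambda t}-1)/\lambda$ and using the elementary bounds on $(1-e^{-\lambda t})/\lambda$ employed in Proposition~\ref{ro:estim} produces~\eqref{bound:lp} for $t>t_0$ and~\eqref{bound::lp} for $0<t<t_0$, and the case $p=\infty$ gives the uniform bound~\eqref{Linfty_uniform_bound}. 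The only step requiring genuine care is the vanishing of the convection integral displayed above, which is precisely where the divergence structure of~\eqref{eq:pmec} is used decisively; once this is in place, no new difficulty arises and the remainder is a direct transcription of the absorption case.
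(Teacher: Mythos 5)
Your proposal is correct and follows essentially the same route as the paper, which simply invokes the argument of Proposition~\ref{ro:estim} with the $L^p$ bounds of Proposition~\ref{u:estim3} in place of Proposition~\ref{u:estim}; the one genuinely new ingredient you supply --- that $-\int (u^\epsilon)^{p-1}\,\mathbf{b}\cdot\nabla (u^\epsilon)^q = -\frac{q}{p+q-1}\,\mathbf{b}\cdot\int \nabla (u^\epsilon)^{p+q-1}=0$ by the divergence structure, so only the diffusion decay rate survives --- is exactly what underlies the paper's statement of Proposition~\ref{u:estim3}. The transfer to $\rho$ via~\eqref{eq:simivar} and the derivation of~\eqref{Linfty_uniform_bound} by combining~\eqref{estim::lp} and~\eqref{bound:lp} at $p=\infty$ match the paper's treatment.
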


We will focus on the one dimensional case below with $\mathbf{b}=1$.
It turns out that in this special case, weak solutions
to Eq.~\eqref{eq:pmec} (and hence weak solutions to Eq.~\eqref{eq:smconv}) are unique,
by introducing the  variable in the \emph{integrated form}, i.e.,
\[
  v(x,\tau)=\int_{-\infty}^{x}u(y,\tau)\mathrm{d}y \,\,\hbox{ for }\,\,\tau\geq0,\,x\in\mathbb{R}.
\]
As a result, if $u$ satisfies Eq.~\eqref{eq:pmec}, then  $v$ satisfies the equation
\begin{equation}
v_{\tau}+|v_{x}|(-\Delta)^{1-s}v+|v_{x}|^{q}=0
\label{integratedequat}
\end{equation}
with the initial data $v(x,0)=v_{0}(x)=\int_{-\infty}^{x}u_{0}(y)\mathrm{d}y$.
The mass conservation of $u$ implies the following boundary conditions for $v$,
\[
\lim_{x\rightarrow-\infty}v(x,\tau)=0, \quad \lim_{x\rightarrow+\infty}v(x,\tau)=M_0
\]
for all $\tau\geq0$.  In fact, $v$ is a viscosity solution of Eq.~\eqref{integratedequat}
by following the procedures in~\cite[Proposition 4.2]{SDV18} (see also \cite[Section 8]{MR3419724}).
The uniqueness of these viscosity solutions is then guaranteed by the comparison principle
proved in~\cite[Theorem 6.1]{ChasJako}, yielding to the following result.
\begin{lem}
  Let $u_{0}$ be a nonnegative function in $L^1(\mathbb{R})$. Then there exist
  a unique weak solution to Eq. \eqref{eq:pmec} with initial data $u_0$.
\end{lem}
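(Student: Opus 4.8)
The plan is to separate existence from uniqueness. Existence is already furnished by the construction of Section~2: the weak solution is obtained as the limit of the regularized problems~\eqref{eq:pmecreg}. The genuinely new point is uniqueness, and the right framework for it is the \emph{integrated variable}, which converts the nonlocal conservation law into a scalar Hamilton--Jacobi equation amenable to a comparison principle.

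First I would make rigorous the formal computation indicated above. Given a nonnegative weak solution $u$ of~\eqref{eq:pmec} with $\mathbf{b}=1$, set $v(x,\tau)=\int_{-\infty}^{x}u(y,\tau)\,\mathrm{d}y$, so that $v_x=u\geq 0$, $v$ is nondecreasing in $x$, and $v(-\infty,\tau)=0$, $v(+\infty,\tau)=M_0$ by mass conservation. Integrating~\eqref{eq:pmec} in $x$ and using that $\partial_x(-\Delta)^{-s}u=\partial_x^2(-\Delta)^{-s}v=-(-\Delta)^{1-s}v$ (since $\partial_x$ commutes with the Riesz operator and $\partial_x^2=-(-\Delta)$ in one dimension), together with $|v_x|=v_x=u$, yields exactly~\eqref{integratedequat}; the far-field decay of $u$ guarantees that the boundary contributions at $\pm\infty$ vanish.

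The key step is to upgrade $v$ from a solution inherited from the weak formulation to a genuine \emph{viscosity} solution of~\eqref{integratedequat}. Here I would follow the approximation argument of~\cite[Proposition 4.2]{SDV18} (see also~\cite[Section 8]{MR3419724}): the regularized densities $u^\epsilon$ solving~\eqref{eq:pmecreg} have integrated variables $v^\epsilon$ solving a viscous, nonlocal Hamilton--Jacobi equation (the $\epsilon\Delta u^\epsilon$ term becoming $\epsilon v^\epsilon_{xx}$), for which classical and viscosity solutions coincide; passing to the limit $\epsilon\to 0^+$ and invoking the stability of viscosity solutions under locally uniform convergence shows that the limiting $v$ is a viscosity solution of~\eqref{integratedequat} with datum $v_0(x)=\int_{-\infty}^{x}u_0(y)\,\mathrm{d}y$. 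The nonlocal term $|v_x|(-\Delta)^{1-s}v$ must be interpreted in the viscosity sense by replacing $v$ with the test function near the contact point while retaining the genuinely nonlocal tail, exactly as in the cited references.

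Finally, uniqueness of $v$—and hence of $u=v_x$—follows from the comparison principle for viscosity solutions proved in~\cite[Theorem 6.1]{ChasJako}: two viscosity solutions of~\eqref{integratedequat} sharing the initial datum $v_0$ and the boundary behaviour at $\pm\infty$ must coincide, so the associated densities agree. I expect the main obstacle to be the third step, namely verifying that the weak solution's integrated variable really is a viscosity solution: this demands compatibility between the distributional formulation used to define $u$ and the viscosity formulation adapted to the mixed local/nonlocal structure of~\eqref{integratedequat}, and in particular uniform control of the nonlocal term along the $\epsilon\to 0$ limit. Once this is in place, existence requires no further work, since the constructed weak solution supplies the required viscosity solution.
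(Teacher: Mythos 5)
Your proposal follows essentially the same route as the paper: existence comes from the regularized construction, and uniqueness is obtained by passing to the integrated variable $v=\int_{-\infty}^{x}u$, verifying that $v$ is a viscosity solution of~\eqref{integratedequat} via the approximation arguments of~\cite[Proposition 4.2]{SDV18} and~\cite[Section 8]{MR3419724}, and then invoking the comparison principle of~\cite[Theorem 6.1]{ChasJako}. Your derivation of the integrated equation (including the identity $\partial_x^2(-\Delta)^{-s}v=-(-\Delta)^{1-s}v$ and the sign $|v_x|=v_x=u$) and your identification of the delicate step --- showing the limit of the $v^\epsilon$ is a genuine viscosity solution --- match the paper's argument.
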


We now prove the exponential convergence of the relative entropy between the solution $\rho$  and its Barenblatt profile $\rho_{M_0}$.
Formally, if $\rho$ is a smooth solution to Eq.~\eqref{eq:smconv}, then
\begin{align*}
  \frac{\mathrm{d}}{\mathrm{d}t}H[\rho]
=-I[\rho]-P(t)^{-\theta}\int (\rho^{q})_x\left((-\Delta)^{-s}\rho+\frac{1}{2}|x|^2
\right)\mathrm{d}x.
\end{align*}
Now the key step to obtain a self-contained differential inequality  is to
relate the last integral above to $I[\rho]$ (instead of $H[\rho]$ as in the
absorption case) via integration by parts and
 the Cauchy-Schwarz inequality. That is,
\begin{align*}
- P(t)^{-\theta}\int (\rho^q)_x \left[ \frac{x^2}{2}+(-\Delta)^{-s}\rho\right] dx
&=P(t)^{-\theta}\int \rho^q\left[\frac{x^2}{2}+(-\Delta)^{-s}\rho\right]_{x}dx \cr
&\leq P(t)^{-\theta}\left(\int_{\mathbb{R}} |\rho|^{2q-1}\mathrm{d}x\right)^{1/2}I[\rho]^{1/2}\cr
&\leq C(\|u_0\|_1,\|u_0\|_\infty,q,s)P(t)^{-\theta} I[\rho]^{1/2}.
\end{align*}
Since the total mass is conserved, $H[\rho_{M_0}]$ is a constant and the above computation implies
\begin{equation}\label{estim2}
  \frac{\mathrm{d}}{\mathrm{d}t}H[\rho(t)|\rho_{M_0}]
\leq -I[\rho(t)]+CP(t)^{-\theta} I[\rho(t)]^{\frac{1}{2}},
\end{equation}
from which the exponential convergence of the relative entropy can be proved
as in the following theorem.
\begin{thm}
  \label{thm:expsmoothconv}
  Let $N=1$, $q>3-2s$ and $\rho$ be the unique weak solution of the
  one-dimensional nonlocal porous medium equation~\eqref{eq:smconv} with
  non-negative initial data $u_0\in L^{1}(\mathbb{R},(1+|x|^{2})dx)\cap
  L^{\infty}(\mathbb{R})$ and $s<1/2$. Assume that $\rho$ is smooth and let
  $\rho_{M_0}$ be the Barenblatt profile with total conserved mass $M_0$.
  Then the relative entropy $H[\rho|\rho_{M_0}]$ decays to zero exponentially fast. More precisely,
  there is a constant $C$ depending on $\|u_0\|_1, \|u_0\|_{\infty}$,
  $H[u_{0}|\rho_{M_0}]$, $q$ and $s$, such that
  \begin{equation}
    H[\rho(t)|\rho_{M_0}] \leq C(1+t)^2 \exp(-2\min(1,\lambda\theta)t).
    \label{exponentialconv2}
  \end{equation}
\end{thm}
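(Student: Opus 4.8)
The plan is to turn the differential inequality \eqref{estim2} into a self-contained inequality for the relative entropy $h(t):=H[\rho(t)|\rho_{M_0}]$ and then integrate it by a Gronwall argument, exactly as in the proof of Theorem~\ref{expdecaysmooth}. First I would invoke the one-dimensional entropy--entropy dissipation inequality \eqref{eq:entrdisp}, which gives $I[\rho(t)]\geq 2h(t)$, and write the perturbation prefactor as $P(t)^{-\theta}=e^{-\lambda\theta t}$, so that \eqref{estim2} reads $h'(t)\leq -I[\rho]+C e^{-\lambda\theta t} I[\rho]^{1/2}$ for a constant $C=C(\|u_0\|_1,\|u_0\|_\infty,q,s)$ coming from the Hardy--Littlewood--Sobolev step already performed before \eqref{estim2}.

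The essential new difficulty, and the point where the convection case genuinely differs from the absorption case, is that the perturbation is controlled here by $I[\rho]^{1/2}$ rather than directly by $h^{1/2}$; in the absorption case the extra term was estimated through the $\dot H^{-s}$-norm of $\rho-\rho_{M(\tau)}$, hence by $h^{1/2}$ via \eqref{frac_lap_rel_entropy}, leaving the full dissipation available to produce the $-2h$ contraction. Writing $a:=Ce^{-\lambda\theta t}$, the right-hand side $\phi(I):=-I+aI^{1/2}$ is \emph{not} monotone in $I$, so one cannot simply replace $I$ by its lower bound $2h$ in both occurrences. I would resolve this by a dichotomy in time. On the set where $h(t)\geq a(t)^2/8$ one has $2h\geq a^2/4$, which places the whole admissible range $I\geq 2h$ inside the region where $\phi$ is decreasing; hence $\phi(I)\leq\phi(2h)$ and
\[
h'(t)\leq -2h(t)+\sqrt{2}\,C e^{-\lambda\theta t}\,h(t)^{1/2}.
\]
On the complementary set $h(t)<a(t)^2/8=\tfrac{C^2}{8}e^{-2\lambda\theta t}$, the relative entropy is already below the desired floor, since $e^{-2\lambda\theta t}\leq e^{-2\min(1,\lambda\theta)t}$.

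On the first set the inequality is identical in form to the one driving Theorem~\ref{expdecaysmooth}: dividing by $2h^{1/2}$ gives $(h^{1/2})'\leq -h^{1/2}+\tfrac{C}{\sqrt2}e^{-\lambda\theta t}$, and Gronwall's inequality applied to $e^{t}h^{1/2}$ yields $h(t)\leq C(1+t)^2 e^{-2\min(1,\lambda\theta)t}$, the three cases $\lambda\theta\gtrless 1$ producing respectively the two exponents, with the $(1+t)^2$ factor appearing at the threshold $\lambda\theta=1$. Patching the two regimes along the time axis — taking, as in Theorem~\ref{expdecaysmooth}, the left endpoint of each maximal interval of the first type as small as possible, so that the initial datum for Gronwall is either $h(0)$ or a value of size $\tfrac{C^2}{8}e^{-2\lambda\theta t_0}$ — gives the bound \eqref{exponentialconv2} for all $t>0$.

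I expect the main obstacle to be precisely the non-monotonicity of $\phi(I)=-I+aI^{1/2}$, i.e. the fact that the dissipation $I$ is simultaneously the quantity one wants to spend against $h$ to produce the $-2h$ contraction \emph{and} the quantity appearing in the perturbation term. Making the regime split rigorous — or, equivalently, packaging it into a comparison-ODE lemma in the spirit of Lemma~\ref{lem:fdeq} — is the one place where some care is needed; everything else (the conservation of mass that makes $H[\rho_{M_0}]$ constant in \eqref{estim2}, and the final Gronwall integration) is routine.
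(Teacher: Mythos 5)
Your proposal is correct and follows essentially the same route as the paper: the paper also observes that $\sigma\mapsto-\sigma+CP(t)^{-\theta}\sigma^{1/2}$ is decreasing only for $\sigma\geq C^{2}P(t)^{-2\theta}/4$, splits according to whether $H[\rho(t)|\rho_{M_0}]$ exceeds $C^{2}P(t)^{-2\theta}/8$ (in which case $I\geq 2H$ lies in the monotone region and one may substitute $I\mapsto 2H$), and then integrates the resulting inequality $\frac{d}{dt}H\leq-2H+\sqrt{2}CP(t)^{-\theta}H^{1/2}$ exactly as in Theorem~\ref{expdecaysmooth}. Your identification of the non-monotonicity of the right-hand side in $I$ as the key new point of the convection case matches the paper's treatment precisely.
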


\begin{proof}
Define the function $f(\sigma)=-\sigma+CP(t)^{-\theta} \sigma^{1/2}$
for non-negative $\sigma$. Then $f$ is decreasing on the interval $[
C^{2}P(t)^{-2\theta}/4,\infty)$. If $H[\rho (t)|\rho_{M_0}]  \leq
C^2P(t)^{-2\theta}/8$, then the bound~\eqref{exponentialconv2}
holds true.
Otherwise if $H[\rho(t)|\rho_{M_0}]  > C^2P(t)^{-2\theta}/8$ at any time $t$,
 by the entropy-entropy dissipation inequality \eqref{eq:entrdisp},
\[
C^{2}P(t)^{-2\theta}/4<2 H[\rho(t) |\rho_{M_0}]\leq I[\rho(t)]
\]
and  hence $f\big(I[\rho(t)]\big) \leq f\big(2H[\rho(t)|\rho_{M_0}\big)$
  by the monotonicity  of $f$.
Therefore Eq.~\eqref{estim2} implies
\[
  \frac{\mathrm{d}}{\mathrm{d}t} H[\rho|\rho_{M_0}]  \leq
  f\Big( 2H[\rho|\rho_{M_0}]\Big) = -2H[\rho|\rho_{M_0}]
  +\sqrt{2}CP(t)^{-\theta}\big(H[\rho|\rho_{M_0}]\big)^{1/2}.
\]
This differential inequality can be solved in the same way as in
Theorem~\ref{expdecaysmooth} to obtain the
exponential bound~\eqref{exponentialconv2}
of the relative entropy.
\end{proof}

To show the exponential convergence of general weak solutions of
Eq.~\eqref{eq:smconv}, similarly we consider the one-dimensional regularised problem
\begin{equation}\label{eq:smregconv}
\frac{\partial \rho^\epsilon}{\partial t}
=\nabla\cdot\left[ \rho^\epsilon \nabla\left( (-\Delta)^{-s}\rho^\epsilon
+\frac{1}{2}|x|^2 + \epsilon \log \rho^\epsilon\right)\right] -
P(t)^{-\theta}\frac{\partial}{\partial x}(\rho^\epsilon)^q,\qquad \rho^\epsilon(x,0) = u_0(x).
\end{equation}
Many properties of the solution $\rho^\epsilon$ can be established in a similar way
in the absorption case, such as the counterpart of Lemma \ref{boundsecmomapr},
the confinement of total mass and the weak $L^1$ convergence of
$\rho^\epsilon(t)$ to the weak solution $\rho(t)$ as $\epsilon$ goes to zero.
Moreover, from the governing equation~\eqref{eq:smregconv} satisfied by
$\rho^\epsilon$, we get
\[
  \frac{\mathrm{d}}{\mathrm{d}t}
  H_\epsilon\big[\rho^\epsilon(t)|\rho^\epsilon_{M_0}\big]
  \leq -I_\epsilon[\rho^\epsilon(t)]
  +CP(t)^{-\theta}I_\epsilon[\rho^\epsilon(t)]^{1/2}.
\]
Using the same technique as in Theorem~\ref{thm:expsmoothconv},
we obtain the exponential convergence of
$H_{\epsilon}[\rho^{\epsilon}(t)|\rho^\epsilon_{M_0}]$, and hence
the same bound~\eqref{exponentialconv2}  for general weak solutions by taking the limit as
$\epsilon$ goes to zero, providing the proof of Theorem~\ref{thm:conv} for the convergence of relative entropy for the original equation~\eqref{eq:pmec}.
Finally, by assuming uniform in time H\"{o}lder seminorm of
the solution $\rho(t)$, the exponential convergence of $\rho(t)$
towards $\rho_{M_0}$ in $L^1(\mathbb{R})$ or $L^2(\mathbb{R})$ norms  can be proved in a similar as in Section~\ref{Lpdecay}.


\section{Conclusion and generalisation}\label{generalisation}

In this paper, the long time behaviours of the solutions to the nonlocal porous medium equation
are studied, by showing the convergence of the relative entropy between the solutions and their
Barenblatt profiles. The convergence in other norms can also be obtained, by assuming additional H\"{o}lder regularity on the solutions.
Although we only concentrated on equations with power-law type absorption or convection,
the same procedures can be applied to a larger class of models. For the general one-dimensional equation
\[
  \rho_t - \nabla\cdot (\rho \nabla (-\Delta)^{-s} \rho + x\rho) = - P(t)^{-\delta}
  g(t,x,\rho)
\]
with absorption $g(t,x,\rho)\geq 0$, formally we  have
\begin{align*}
  \frac{\mathrm{d}}{\mathrm{d}t} H[\rho(t)|\rho_{M(t)}]
  &\leq -I[\rho(t)] - P(t)^{-\delta}
  \int (-\Delta)^{-s}(\rho(t)-\rho_{M(t)})g(t,x,\rho) \cr
  &\leq -2H[\rho(t)|\rho_{M(t)}] + P(t)^{-\delta}\big\|(-\Delta)^{-s/2}\big(\rho(t)-\rho_{M(t)}\big)\big\|_2
  \big\|(-\Delta)^{-s/2}g(t,\cdot,\rho)\big\|_2.
\end{align*}
Similarly, for the equation
\[
  \rho_t - \nabla\cdot (\rho \nabla (-\Delta)^{-s} \rho +
  x\rho) =-P(t)^{-\theta}\frac{\partial}{\partial x}\big(\rho^{1/2}h(t,x,\rho)\big),
\]
with general convection $\rho^{1/2}h(t,x,\rho)$ we obtain
\begin{align*}
  \frac{\mathrm{d}}{\mathrm{d}t} H[\rho(t)|\rho_{M_0}] \leq  -I[\rho(t)] + P(t)^{-\theta}
  \big\|h(t,\cdot,\rho)\big\|_2I[\rho(t)]^{1/2}.
\end{align*}
As long as the solution $\rho$, the absorption $g$ or the convection $\rho^{1/2}h$
satisfy appropriate bounds,  the exponential
convergence of the relative entropy can also be established.

In addition to the above generalisations with other absorption or convection terms,
many related problems are still widely open.
The convergence results presented here can not be extended into higher dimensions,
precisely because the entropy-entropy dissipation inequality is only proved in one dimension.
Therefore, the clarification of this critical inequality in higher dimensions will shed lights on
the behaviours of solutions to other associated equations.
In the parameter regimes we considered in this paper, the absorption or convection eventually
becomes  exponentially small in the transformed equation with the similarity variables.
In the other parameter regimes, the strength
of the absorption or the convection becomes comparable to the nonlocal diffusion,
usually leading to new nontrivial equations whose quantitative properties are much
more difficult to study.


\paragraph*{Acknowledgements}
We thank Prof. Juan Luis V\'{a}zquez for fruitful discussions concerning absorption-diffusion and convection-diffusion models along with some regularity problems connected to model~\eqref{eq:pmep}. We thank Felix Del Teso for useful discussions about the one dimensional integrated model obtained from \eqref{eq:pmep} and related interesting numerical aspects.
This work has been partially supported by GNAMPA of the Italian
INdAM (National Institute of High Mathematics) and ``Programma
triennale della Ricerca dell'Universit\`{a} degli Studi di Napoli
``Parthenope'' - Sostegno alla ricerca individuale 2015-2017''.


\end{document}